\tikzstyle{Block}=[rectangle,minimum width=3cm,minimum height=1cm,text centered,text width=5.2cm,draw=black]
\tikzstyle{Implication}=[rectangle,minimum width=3cm,minimum height=1cm,text centered,text width=5.2cm]
\tikzstyle{jian}=[<->, >=stealth]
\sloppy\allowdisplaybreaks[4]
  \def\cA{{\cal A}}  
  \def\cB{{\cal B}}  
  \def\cC{{\cal C}}  
\def\dbE{\mathbb{E}}    
\def\dbF{\mathbb{F}}  \def\cF{{\cal F}}  
  \def\cG{{\cal G}}  
\def\dbH{\mathbb{H}}    
  \def\cJ{{\cal J}}  
  \def\cL{{\cal L}}  
  \def\cM{{\cal M}}  
  \def\cN{{\cal N}}  
\def\dbP{\mathbb{P}}    
  \def\cQ{{\cal Q}}  
\def\dbR{\mathbb{R}}  \def\cR{{\cal R}}  
\def\dbS{\mathbb{S}}  \def\cS{{\cal S}}  
  \def\cU{{\cal U}}
  \def\cX{{\cal X}}
      \def\lt{\left}       \def\hb{\hbox}
\def\ms{\medskip}        \def\rt{\right}      \def\ae{\hb{a.e.}}
\def\bs{\bigskip}        \def\lan{\langle}    
   \def\ran{\rangle}    \def\tr{\hb{tr$\,$}}
\def\ts{\textstyle}         
\def\no{\noindent}          
\def\hp{\hphantom}         
\def\nn{\nonumber}         
\def\rf{\eqref}            
\def\cd{\cdot}             
\def\deq{\triangleq}     \def\({\Big (}       \def\ba{\begin{aligned}}
\def\les{\leqslant}      \def\){\Big )}       \def\ea{\end{aligned}}
\def\ges{\geqslant}      \def\[{\Big[}        \def\bel{\begin{equation}\label}
\def\ti{\tilde}          \def\]{\Big]}        \def\ee{\end{equation}}
      \def\q{\quad}        
\def\h{\widehat}         \def\qq{\qquad}      
\def\a{\alpha}  \def\G{\Gamma}      \def\Om{\Omega}  
\def\b{\beta}   \def\D{\Delta}   \def\d{\delta}        
   \def\Th{\Theta}    \def\Si{\Sigma}  \def\si{\sigma}
\def\f{\varphi}   \def\l{\lambda}        
    \def\i{\infty}      
\newtheoremstyle{thry}
{}      
{}      
{\sl}   
{}      
{\bf}   
{.}     
{.5em}  
{}      
\theoremstyle{thry}
\newtheorem{theorem}{Theorem}[section]
\newtheorem{proposition}[theorem]{Proposition}
\newtheorem{lemma}[theorem]{Lemma}
\theoremstyle{definition}
\newtheorem{definition}[theorem]{Definition}
\newtheorem{example}[theorem]{Example}
\newenvironment{taggedassumption}[1]
 {\taggedassumptionx}
 {\endtaggedassumptionx}
\theoremstyle{remark}
\newtheorem{remark}[theorem]{Remark}
   \newcommand{\setword}[2]{%
   \phantomsection
   #1\def\@currentlabel{\unexpanded{#1}}\label{#2}%
   }
\begin{document}

\title{\bf Zero-Sum Stackelberg Stochastic Linear-Quadratic Differential Games}

\author{Jingrui Sun\thanks{Department of Mathematics, Southern University of Science and Technology,
                           Shenzhen 518055, China (Email: {\tt sunjr@sustech.edu.cn}).
                           This author is supported by NSFC grant 11901280 and
                           Guangdong Basic and Applied Basic Research Foundation 2021A1515010031.}~~~~
       Hanxiao Wang\thanks{Corresponding author.
       Department of Mathematics, National University of Singapore,
                           Singapore 119076, Singapore (Email: {\tt hxwang14@fudan.edu.cn}).
                           This author is supported by Singapore MOE AcRF Grant R-146-000-271-112.}~~~~
      Jiaqiang Wen\thanks{Department of Mathematics, Southern University of Science and Technology,
                           Shenzhen 518055, China (Email: {\tt wenjq@sustech.edu.cn}).
                           This author is supported by SUSTech start-up fund Y01286233.}}

\maketitle

\no{\bf Abstract.}
The paper is concerned with a zero-sum Stackelberg stochastic linear-quadratic (LQ, for short)
differential game over finite horizons.
Under a fairly weak condition, the Stackelberg equilibrium is explicitly obtained by first solving
a forward stochastic LQ optimal control problem (SLQ problem, for short) and then a backward SLQ problem.
Two Riccati equations are derived for constructing the Stackelberg equilibrium.
An interesting finding is that the difference of these two Riccati equations coincides with the Riccati
equation associated with the zero-sum Nash stochastic LQ differential game,
which implies that the Stackelberg equilibrium and the Nash equilibrium are actually identical.
Consequently, the Stackelberg equilibrium admits a linear state feedback representation,
and the Nash game can be solved in a leader-follower manner.


\ms
\no{\bf Keywords.} \rm
Stochastic differential game, Stackelberg equilibrium, linear-quadratic, two-person, zero-sum,
Nash equilibrium, Riccati equation, closed-loop representation.

\ms
\no{\bf AMS subject classifications.} 91A15, 93E20, 49N10, 49N70.

\section{Introduction}\label{Sec:Intro}
Let $(\Om,\cF,\dbP)$ be a complete probability space, $W$ a one-dimensional standard Brownian motion,
and $\dbF\equiv\{\cF_t\}_{t\ges0}$ the usual augmentation of the natural filtration generated by $W$.
For a given {\it initial state} $x\in\dbR^n$, consider the following controlled linear stochastic differential
equation (SDE, for short) on a finite horizon $[0,T]$:
\bel{state}\left\{\begin{aligned}
   dX(s) &= [A(s)X(s) +B_1(s)u_1(s) +B_2(s)u_2(s)]ds\\
         &\hp{=\ } +[C(s)X(s) +D_1(s)u_1(s)+ D_2(s)u_2(s)]dW(s), \\
    X(0) &= x,
\end{aligned}\right.\ee
where $A,C:[0,T]\to\dbR^{n\times n}$ and $B_i, D_i:[0,T]\to\dbR^{n\times m_i}$ ($i=1,2$),
called the {\it coefficients} of the {\it state equation} \rf{state}, are given deterministic functions.
The problem involves two players with opposing aims. Each player can affect the evolution of the system \rf{state}
by selecting his/her own {\it control}.
In the above, the process $u_i$ ($i=1,2$) represents the control of Player $i$, which belongs to the following space:
$$\ts \cU_i = \Big\{\f:[0,T]\times\Om\to\dbR^{m_i} \bigm| \f\hb{~is $\dbF$-progressively measurable, and}
~\dbE\int^T_0|\f(s)|^2ds<\i\Big\}. $$
The solution $X(\cd)\equiv X(\cd\,;x,u_1,u_2)$ of \rf{state} is called
the {\it state process} corresponding to $x$ and $(u_1,u_2)$.
The criterion for the performance of $u_1$ and $u_2$ is given by the following quadratic functional:
\bel{cost}
J(x;u_1,u_2) = \dbE\lt\{\lan GX(T),X(T)\ran
+\int_0^T\[\lan Q(s)X(s),X(s)\ran+ {\ts\sum_{i=1}^2}\lan R_i(s) u_i(s),u_i(s)\ran\]ds  \rt\},
\ee
where $G$ is an $n\times n$ symmetric matrix; $Q:[0,T]\to\dbR^{n\times n}$
and $R_{i}:[0,T]\to\dbR^{m_i\times m_i}$ ($i=1,2$) are deterministic, symmetric matrix-valued functions.

\ms

In our problem, Player $2$ is the leader, who announces his/her control $u_2$ first, and Player $1$ is the follower,
who chooses his/her control accordingly.
The criterion functional $J(x;u_1,u_2)$ is regarded as the loss of Player $1$ and the gain of Player $2$.
So whatever the leader announces, the follower will play optimally; that is, Player $1$ will select a control
$\bar u_1(\cd\,;u_2,x)$ (depending on the control $u_2$ announced by the leader as well as the initial state $x$)
such that $J(x;u_1,u_2)$ is minimized.
Knowing this the leader will choose a $\bar u_2$ a priori so that $J(x;\bar u_1(\cd\,;u_2,x),u_2)$ is maximized.
Such a game is referred to as a {\it two-person Stackelberg stochastic linear-quadratic (LQ, for short) differential game}
(denoted by Problem (SG)), in memory of Stackelberg's pioneering contribution in this field.
The main objective of the two players is to find the {\it Stackelberg equilibrium}, mathematically defined as follows.

\begin{definition}\label{def:S-junheng}
A control pair $(\bar u_1,\bar u_2)\in\cU_1\times\cU_2$ is called a {\it Stackelberg equilibrium} for the initial state $x$ if
$$ \inf_{u_1\in\cU_1}J(x;u_1,\bar u_2) = J(x;\bar u_1,\bar u_2) = \sup_{u_2\in\cU_2}\inf_{u_1\in\cU_1}J(x;u_1,u_2). $$
\end{definition}

As mentioned earlier, when playing the game, the control selected by the follower depends on the initial state and
the control announced by the leader.
This means that the optimal control of the follower is a mapping from $\cU_2\times\dbR^n$ into $\cU_1$,
which is usually referred as to an {\it Elliot--Kalton strategy}.
Thus, the players may use the optimal control-strategy pair as an alternative solution to the game.
More precisely, we have the following definition.

\begin{definition}\label{def:S-junheng*}
Let $\G_1$ be the set of all Elliot--Kalton strategies for Player $1$.
A control-strategy pair $(\bar \a_1,\bar u_2)\in\G_1\times\cU_2$ is said to be {\it optimal} for the initial state $x$ if
\begin{align*}
 J(x;\bar\a_1(u_2,x),u_2) &=\inf_{u_1\in\cU_1}J(x;u_1,u_2), \q \forall u_2\in\cU_2, \\
 J(x;\bar\a_1(\bar u_2,x),\bar u_2) &=\sup_{u_2\in\cU_2}J(x;\bar\a_1(u_2,x), u_2).
\end{align*}
\end{definition}

Comparing \autoref{def:S-junheng*} with \autoref{def:S-junheng}, it is not hard to see that
the outcome $(\bar u_1,\bar u_2)\equiv(\bar\a_1(\bar u_2,x),\bar u_2)$ of an optimal control-strategy
pair is a Stackelberg equilibrium.

\ms

Since the pioneering work \cite{Stackelberg1934} by Stackelberg, the theory of Stackelberg games
has been widely used in  economics, finance, and engineering; such as the famous principal--agent
model (see, for example, \cite{Bensoussan2018,Cvitanic-Zhang2012}).
The Stackelberg  stochastic LQ differential  game was initially studied by Bagchi and Basar \cite{Bagchi-Basar1981}.
In 2002, a general framework was formulated by Yong \cite{Yong2002}, in which the leader's problem was described
as an LQ optimal control problem for forward-backward SDEs.
By a decoupling method, Yong showed that the open-loop solution can be represented as a state feedback form,
provided the associated stochastic Riccati equation is solvable.
From then on, there has been extensive research on  Stackelberg stochastic LQ game problems.
For example, in \cite{Bensoussan-Chen-Sethi2015} Bensoussan, Chen, and Sethi established the maximum
principle for Stackelberg games;
Shi, Wang, and Xiong \cite{Shi-Wang-Xiong2016} investigated a Stackelberg stochastic LQ differential
game with asymmetric information;
Moon \cite{Moon2021} studied the case with jump-diffusion systems;
Bensoussan {\it et al.} \cite{Bensoussan-Chau-Lai-Yam2017} considered a mean-field problem with
state and control delays;
Li and Yu \cite{Li-Yu2018} characterized the unique equilibrium of a nonzero-sum Stackelberg LQ game
with multilevel hierarchy in a closed form;
and Moon and Yang \cite{Moon-Yang2020} discussed the time-consistent open-loop solutions
for time-inconsistent Stackelberg LQ games.

\ms

In the literature, it is often assumed that the associated Riccati equations are solvable so that the Stackelberg
equilibrium can be constructed explicitly. However, such an assumption seems too strong in certain situations
because the solvability of the Riccati equations is merely sufficient, but not necessary for the existence of
a Stackelberg equilibrium.
Since the solvability of the Riccati equations itself is very difficult, solving Problem (SG) in a general framework
is more challenging.
The first goal of our paper is to overcome this difficulty in the zero-sum case
and then establish a general approach for finding the Stackelberg equilibrium of Problem (SG) by generalizing
the recent works \cite{Sun-Li-Yong2016,Sun-Wu-Xiong2021} on indefinite stochastic LQ optimal control problems
to the nonhomogenous case and making some new observations.
This can be regarded as one of the main contributions in this paper.

\ms

Another important kind of zero-sum stochastic LQ differential games is the so-called {\it Nash game},
in which both players announce their decisions simultaneously (see, for example,
\cite{Mou-Yong2006,Sun-Yong2014,Sun-Yong20202}).
In a Nash game, the objective of the players is to find a {\it saddle point} $(u_1^*,u_2^*)$
(also called a {\it Nash equilibrium}), defined by
\begin{equation}\label{saddle-point-inequality}
  J(x;u_1^*,u_2) \les J(x;u_1^*,u_2^*) \les J(x;u_1,u_2^*), \q\forall (u_1,u_2)\in\cU_1\times\cU_2.
\end{equation}
Such a pair (if exists) is the best choice for both players in the sense that no player can benefit
by changing their own control.
For simplicity, we shall denote the Nash game by Problem (NG).

\ms

Note that though the players in Problem (SG) have opposite objects,
they still agree to make some cooperations,
because there is a hierarchical structure of decision making between the players.
However,  the players in Problem (NG)  are pure competitors as they are treated on an equal basis.
Thus in most of the literature, if not all,  Problems (NG) and (SG) are regarded as two different games.
In this paper, we shall compare Problem (SG) and Problem (NG) more carefully, and reveal an interesting fact:
{\it the Stackelberg equilibrium and the Nash equilibrium coincide} under a {\it uniform convexity-concavity condition}
((UCC) condition, for short).
This is another important contribution of the paper and is completely new in the literature.

\subsection{The main results}
As mentioned, the purpose of this paper is to develop a general approach for solving Problem (SG)
and to establish the connection between Problem (SG) and Problem (NG).
We now briefly list our ideas and main results as follows.
\begin{enumerate}[(i)]
\item
We consider first, for a fixed $u_2\in\cU_2$, the follower's problem, which we denote by Problem (FLQ).
By a result form Sun, Li, and Yong \cite{Sun-Li-Yong2016}, we know that Problem (FLQ) admits a unique open-loop
optimal control $\bar u_1$ of the form $\bar u_1=\bar \a_1(u_2,x)\equiv \Th_1 \bar X+v_1\in\cU_1$,
where the function $\Th$ and the process $v_1$ are determined by the associated Riccati equation
and the associated backward SDE (BSDE, for short), respectively.
Note that the state process $\bar X$ depends on the initial state $x$, and both $\bar X$ and $v_1$ depend on the
given control $u_2$. Thus, $\bar u_1$ is a functional of $u_2$ and $x$.

\item
Knowing the follower will use his/her best response $\bar u_1=\bar \a_1( u_2,x)$,
the leader's problem (denoted by Problem (LLQ)) is then to choose a $\bar u_2\in\cU_2$ to maximize the utility functional
$$ J(x;\bar \a_1(u_2,x),u_2)\equiv J(x; \Th_1 \bar X(u_2)+v_1(u_2),u_2). $$
A remarkable feature of the above functional is that it has an explicit representation independent of the forward
state process $\bar X$. Using this crucial observation,
we convert the leader's problem into a backward stochastic LQ optimal control problem.

\item
We develop some results on backward stochastic LQ optimal control problems with nonhomogeneous terms (see \autoref{Prop:backward}),
solve the backward control problem derived from the Stackelberg game (see \autoref{prop:leader}), and then verify the resulting
control pair is a Stackelberg equilibrium of the game (see \autoref{Thm:SOP-control}).
\end{enumerate}

The following is concerned with the connections between Problems (SG) and (NG).
\begin{enumerate}[(i)]\setcounter{enumi}{3}
\item
Under the (UCC) condition (i.e., \ref{ass:H3} and \ref{ass:H5}), we study the leader's problem
by a careful convexity analysis of the criterion functional (see \autoref{Prop:saddle-point-control})
and a closer investigation of backward stochastic LQ optimal control problems (see \autoref{Prop:backward1}).
We obtain the unique optimal control of Problem (LLQ) by solving a new Riccati equation (see \autoref{prop:leader1}),
in which the  auxiliary function introduced in \cite{Sun-Wu-Xiong2021} is removed.
Then we further show that the Stackelberg equilibrium of Problem (SG) admits a closed-loop representation
(see \autoref{Thm:SOP-control1}).

\item
We find an interesting fact: the solutions to the Riccati equations associated with
Problems (FLQ) and (LLQ) can be used to solve the Riccati equation derived in Sun \cite{Sun2020}
for finding the saddle point of Problem (NG) (see \autoref{Thm:Riccati}).
A key point of the proof is to build a bridge between the singular terms of these Riccati equations
(see \autoref{lem:inverse}), which can be regarded as the most technical part of the paper.
Moreover, \autoref{Thm:Riccati} generalizes the results of \cite{Sun2020} at least in two aspects:

\begin{itemize}
\item The well-posedness of the Riccati equation associated Problem (NG) is established
under a weaker assumption and with a new constructive method (see \autoref{remark-Riccati}).

\item An explicit relationship between the Riccati equations associated with Problems (SG), (FLQ) and (LLQ)
is established, which is interesting in its own right and new in the literature.
\end{itemize}

\item
We observe that the closed-loop systems of Problems (SG) and (NG) coincide (see \autoref{closed-system-SPG}),
from which we conclude that the Stackelberg equilibrium obtained in \autoref{Thm:SOP-control} and the unique
open-loop saddle point of Problem (NG) are identical (also see \autoref{Thm:saddle-point-control} for a direct proof).
This means that we can solve the Nash game in a leader-follower manner.
\end{enumerate}

The remainder of this paper is structured as follows.
In Subsection \ref{sect-literature}, we give a literature review on some closely related topics.
Section \ref{Sec:Preliminaries} collects some preliminary results that will be frequently used in the sequel.
In Section \ref{Sec:SOG}, the Stackelberg equilibrium of Problem (SG) is obtained by
solving a forward-backward stochastic LQ optimal control problem.
Section \ref{sec:Further-Analysis} is devoted to the closed-loop representation of the Stackelberg equilibrium
by some further analysis of backward stochastic LQ optimal control problems.
The connection between Problems (SG) and (NG) is established in Section \ref{sec:Connection},
and Section \ref{Sec:Conclusion} concludes the paper.
Some technical details are sketched in Appendix.

\subsection{Literature review on the related topics}\label{sect-literature}

The LQ control/game theory has occupied the center stage for research in control theory for a long history.
Since the purpose of the paper is not to make a lengthy survey on the literature,
we only list some closely related works here.
In Problem (SG), the follower's problem is a (forward) stochastic LQ optimal control problem.
We refer the reader to the books \cite[Chapter 6]{Yong-Zhou1999} and \cite{Sun-Yong20201} for a
detailed study of this subject.
In Problem (SG), the leader's problem is a backward stochastic LQ optimal control problem,
which was initially investigated by Lim and Zhou \cite{Lim-Zhou2001},
and then generalized by \cite{Li-Sun-Xiong2019,Sun-Wang2019,Sun-Wu-Xiong2021} to various cases.
The results obtained in Section \ref{Sec:SOG} benefit from the recent work of Sun, Wu, and Xiong \cite{Sun-Wu-Xiong2021} a lot.
However, to explore the connection between Problems (SG) and (NG) in Sections \ref{sec:Further-Analysis} and \ref{sec:Connection},
we still need to overcome some mathematical difficulties (see, for example, \autoref{Thm:Riccati})
and to make some more accurate observations (see, for example, \autoref{Prop:saddle-point-control} and \autoref{Thm:saddle-point-control}).
For more information and references on Problem (NG), we send the interested reader to the works
\cite{Zhang2005,Mou-Yong2006,Delfour2007,Delfour-Sbarba2009,Sun-Yong2014,Yu2015,Sun2020,Sun-Wang-Wu2021} and
the recent book \cite{Sun-Yong20202} by Sun and Yong.

\section{Preliminaries}\label{Sec:Preliminaries}

Throughout the paper,  $\dbR^{n\times m}$ denotes the Euclidean space consisting of $n\times m$ real matrices,
endowed with the Frobenius inner product $\lan M,N\ran=\tr[M^\top N]$,
where $M^\top$ is the transpose of $M$ and $\tr(M)$ is the trace of $M$.
The norm induced by  $\lan\cd\,,\cd\ran$ is denoted by $|\cd|$.
The identity matrix of size $n$ is denoted by $I_n$,
which is often simply written as $I$ when there is no confusion.
When $m=1$, we simply write $\dbR^{n\times 1}$ as $\dbR^n$.
Let $\dbS^n$  be the subspace of $\dbR^{n\times n}$ consisting of symmetric matrices
and $\dbS^n_+$ (resp., $\dbS^n_-$) be the subset of $\dbS^n$ consisting of positive (resp., negative) semidefinite matrices.
For $M,N\in\dbS^n$, we write $M\ges N$ (resp., $M>N$) if $M-N$ is positive
semidefinite (resp., positive definite).
For an $\dbS^n$-valued measurable function $F$ on $[0,T]$, we write
$$
\left\{
\begin{aligned}
& F \ges 0   &&\q \hb{if}\q  F(s)\ges 0,       &&\q \ae~s\in[0,T],\\
& F   >  0  &&\q \hb{if}\q F(s)   > 0,       &&\q \ae~s\in[0,T],\\
& F \gg  0  &&\q \hb{if}\q  F(s)\ges \d I_n,  &&\q \ae~s\in[0,T],~\hb{for some}~\d>0.
\end{aligned}\right.
$$
Moreover, we use $F\les 0$, $F<0$ and $F\ll 0$ to indicate that  $-F\ges 0$, $-F>0$ and $-F\gg0$, respectively.
If $F\gg 0$ (resp., $F\ll 0$), we say that $F$ is uniformly positive (resp., negative) definite.
For any Euclidean space $\dbH$ (which could be $\dbR^n$, $\dbR^{n\times m}$, $\dbS^n$, etc.),
we introduce the following spaces:
\begin{align*}
L^\i(0,T;\dbH)
&= \big\{\f:[0,T]\to\dbH~|~\f~\hb{is  essentially bounded}\big\};\\
L_{\cF_T}^2(\Om;\dbH)
  &= \big\{\xi:\Om\to\dbH~|~\xi~\hb{is $\cF_T$-measurable and}~\dbE[|\xi|^2]<\i\big\};\\
L_\dbF^2(0,T;\dbH)
&= \ts\Big\{\f:[0,T]\times\Om\to\dbH~|~\f\hbox{ is $\dbF$-progressively measurable},\\
&\qq\q \hb{and}~\dbE\int_0^T|\f(s)|^2ds<\i\Big\};\\
L_\dbF^2(\Om;C([0,T];\dbH))
&= \ts\Big\{\f:[0,T]\times\Om\to\dbH~|~\f\in\dbF~\hb{is continuous, $\dbF$-adapted, }\\
&\qq\q \hbox{and }  \dbE\[\sup_{0\les s\les T}|\f(s)|^2\]<\i\Big\}.
\end{align*}

To guarantee that Problem (SG) is well-posed, we assume that the coefficients of state equation
\rf{state} and the weighting matrices in quadratic functional \rf{cost} satisfy the following conditions.

\begin{taggedassumption}{(H1)}\label{ass:H1}
The coefficients of state equation \rf{state} satisfy
$$
A,C \in L^\i(0,T;\dbR^{n\times n}),
\q  B_i, D_i \in L^\i(0,T;\dbR^{n\times m_i});~ i=1,2.
$$
\end{taggedassumption}

\begin{taggedassumption}{(H2)}\label{ass:H2}
The weighting matrices in  quadratic functional \rf{cost} satisfy
$$
G\in\dbS^n,\q
Q\in  L^\i(0,T;\dbS^{n}),\q
 R_{i}\in L^\i(0,T;\dbS^{m_i});~i=1,2.
$$
\end{taggedassumption}

Let \ref{ass:H1} hold.
For any $x\in\dbR^n$ and $(u_1,u_2)\in\cU_1\times\cU_2$, by the standard results of SDEs,
state equation \rf{state} admits a unique solution $X\in L^2_{\dbF}(\Om;C([0,T];\dbR^n))$.
Then  under assumption \ref{ass:H2}, the quadratic functional \rf{cost} is well-defined
and thus Problem (SG) is well-posed.

\ms
As mentioned in the introduction section, the game with state equation \rf{state} and functional \rf{cost}
can be formulated as two different problems (i.e., Problem (SG) and Problem (NG)).
Recall \autoref{def:S-junheng} and \autoref{def:S-junheng*}, in which the notions of Stackelberg equilibria
and Elliot--Kalton strategies associated with Problem (SG) are introduced.
Now let us present an important notion of Problem (NG).

\begin{definition}\label{definition-saddle-point}
A control pair $(u_1^*,u_2^*)\in\cU_1\times\cU_2$ is called an {\it open-loop saddle point} (or {\it a Nash equilibrium})
of Problem (NG) for the initial state $x\in\dbR^n$ if
\bel{definition-saddle-points}
J(x;u^*_1,u_2)\les J(x;u^*_1,u^*_2)\les J(x;u_1,u^*_2),\q\forall (u_1,u_2)\in\cU_1\times\cU_2.
\ee
For any $x\in\dbR^n$, we call $V(x)$  a {\it value} of Problem (NG) at $x$ if
\bel{definition-VF}
V(x)=\inf_{u_1\in\cU_1}\sup_{u_2\in\cU_2}J(x;u_1,u_2)
=\sup_{u_2\in\cU_2}\inf_{u_1\in\cU_1}J(x;u_1,u_2).
\ee
\end{definition}

\begin{remark}
The value function $V$ of Problem (NG) is well-defined at $x\in\dbR^n$
only when the following inequality holds:
$$
\inf_{u_1\in\cU_1}\sup_{u_2\in\cU_2}J(x;u_1,u_2)
\les \sup_{u_2\in\cU_2}\inf_{u_1\in\cU_1}J(x;u_1,u_2).
$$
\end{remark}

\subsection{Backward  stochastic LQ optimal control problems with nonhomogeneous terms}
In this subsection, we shall generalize the results of indefinite backward stochastic LQ optimal control problems
obtained by Sun, Wu, and Xiong \cite{Sun-Wu-Xiong2021} to the case with nonhomogeneous terms.

\ms
For any given terminal state $\xi\in L^2_{\cF_T}(\Om;\dbR^n)$, consider the following controlled linear BSDE:
\bel{backward-system}\left\{\begin{aligned}
         dY(s) &=\big[\cA(s) Y(s) +\cB(s)u_2(s)+\cC(s)Z(s)+\si(s) \big]ds \\
         &\q+Z(s) dW(s), \q s\in[0,T],\\
          Y(T) &=\xi,
\end{aligned}\right.\ee
and the utility functional:
\begin{align}\label{utility-backward}
&U(\xi; u_2)= \dbE\bigg\{\int_0^T\big[\lan \cR u_2, u_2\ran+\lan \cQ Y,Y\ran+\lan \cN Z,Z\ran
+2\lan \cS_1 Y, u_2\ran+2\lan \cS_2 Z,u_2\ran+2\lan \cS_3 Y,Z\ran\big] ds \nn\\
&\qq\qq\qq +\lan \cG Y(0),Y(0)\ran+2\lan Y(0),g\ran \bigg\}.
\end{align}
The associated backward stochastic LQ optimal control problem
can be stated as follows.

\ms
\noindent
{\bf Problem (BLQ).} For any given terminal state $\xi\in L^2_{\cF_T}(\Om;\dbR^n)$,
find a control $\bar u_2 \in \mathcal{U}_2$ such that
\bel{sup-U}
U(\xi;\bar u_2)=\sup_{ u_2\in \mathcal{U}_2} U(\xi;u_2)\equiv \bar U(\xi).
\ee

In the following, we are going to find an optimal control of Problem (BLQ),
by similar  arguments to those employed in \cite[Theorem 6.3]{Sun-Wu-Xiong2021}.

\begin{taggedassumption}{(B1)}\label{ass:B3}\rm
The coefficients of state equation \rf{backward-system} and the weighting matrices in functional \rf{utility-backward} satisfy
\begin{align}
&\cA,\cC \in L^\i(0,T;\dbR^{n\times n}),
\q  \cB\in L^\i(0,T;\dbR^{n\times m_2}),\q\cR\in L^\i(0,T;\dbS^{m_2}),\nn\\
&\cQ,\cN\in  L^\i(0,T;\dbS^{n}),\q
\q \cS_i\in L^\i(0,T;\dbR^{m_2\times n});~i=1,2,\nn\\
& \cS_3 \in L^\i(0,T;\dbR^{n\times n}), \q\si\in L^2_{\dbF}(0,T;\dbR^n),\q \cG\in\dbS^n,\q g\in\dbR^n.\label{B1-1}
\end{align}
Moreover, there exists a constant $\l>0$ such that
\bel{uniform-concave-condition}
U_0(0;u_2)\les -\l \dbE\int_0^T |u_2(s)|^2ds,\q \forall u_2\in\cU_2,
\ee
where  $U_0(\xi;u)$ denotes the utility functional $U(\xi;u)$ with  $\si\equiv0$ and $g=0$.
\end{taggedassumption}

It is noteworthy that the condition \rf{uniform-concave-condition} implies
$
\cR \ll 0,
$
whose proof can be found in \cite[Corollary 5.3]{Sun-Wu-Xiong2021}.
Under \ref{ass:B3}, by \cite[Theorem 6.2]{Sun-Wu-Xiong2021} the following Riccati equation admits
a unique negative semidefinite solution $\Si^H\in C(0,T;\dbS^n_-)$:
\bel{BLQ-Riccati-Equation}
\left\{\begin{aligned}
&\dot{\Si}^H-\Si^H \cA^\top-\cA\Si^H+[\cB+\Si^H(\cS_1^H)^\top]\cR^{-1}[\cB^\top+\cS_1^H\Si^H]\\
&\q +[\cL+\Si^H(\cS_3^H)^\top][I+\Si^H\cN^H]^{-1}\Si^H[\cL^\top +\cS^H_3\Si^H]=0,\q t\in[0,T],\\
& \Si^H(T)=0,
\end{aligned}\right.
\ee
where
\begin{align}
\cL&= \cC-\cB\cR^{-1}\cS_2,\q\cN^H= \cN-\cS_2^\top\cR^{-1}\cS_2+H,\nn\\
\q\cS_1^H&=\cS_1+\cB^\top H, \q \cS^H_3= \cS_3-\cS_2^\top\cR^{-1}\cS_1+\cL^\top H,
\label{def-NH}
\end{align}
with the auxiliary function $H$ uniquely determined by the following ordinary differential equation (ODE, for short):
\bel{H-equation}
\left\{\begin{aligned}
&\dot{H}+H\cA+\cA^\top H+\cQ=0,
\q t\in[0,T],\\
& H(0)=-\cG.
\end{aligned}\right.
\ee
Moreover, the function $\h\Si^H$, defined by
\bel{hat-Si}
\h\Si^H\equiv I+\Si^H\cN^H,
\ee
is invertible with $(\h\Si^H)^{-1}\in L^\i(0,T;\dbR^{n\times n})$.
With the unique solution $\Si^H$ of \rf{BLQ-Riccati-Equation}, we introduce the following BSDE:
\bel{BLQ-BSDE}
\left\{\begin{aligned}
d\f(s)&=\big\{[\cA-\Si^H(\cS^H_1)^\top\cR^{-1}\cS^H_1-\cB \cR^{-1}\cS^H_1]\f\\
&\q~ -[\cL+\Si^H(\cS^H_3)^\top](\h\Si^H)^{-1}[\Si^H \cS^H_3\f-\b]-\si\big\}ds+\b dW(s),\q s\in[0,T], \\
 \f(T)&=-\xi,
\end{aligned}\right.
\ee
and SDE:
\bel{BLQ-fSDE}
\left\{
\begin{aligned}
d X(s)&=\big\{\big[(\cS^H_1)^\top\cR^{-1}\cB^\top+(\cS^H_1)^\top\cR^{-1}\cS^H_1\Si^H
+(\cS^H_3)^\top(\h\Si^H)^{-1}(\Si^H\cL^\top+\Si^H\cS^H_3\Si^H)\\
&\qq~-\cA^\top\big]X+(\cS^H_3)^\top(\h\Si^H)^{-1}(\Si^H\cS^H_3\f-\b)+(\cS^H_1)^\top\cR^{-1}\cS^H_1\f\big\}ds\\
&\q+\big\{\big[\cN^H(\h\Si^H)^{-1}(\Si^H\cL^\top+\Si^H\cS^H_3\Si^H)
-\cS^H_3\Si^H-\cL^\top\big]X\\
&\qq~+\cN^H(\h\Si^H)^{-1}[\Si^H\cS^H_3\f-\b]-\cS^H_3\f\big\}dW(s),\q s\in[0,T], \\
X(0)&=g.
\end{aligned}\right.
\ee

\begin{proposition}\label{Prop:backward}
Let {\rm\ref{ass:B3}} hold.
Then for any $\xi\in L^2_{\cF_T}(\Om;\dbR^n)$,
the unique optimal control of {\rm Problem (BLQ)} is given by
\begin{align}
\bar u_2&=\cR^{-1}\big\{[\cB^\top+\cS^H_1\Si^H]X+\cS^H_1\f\big\}\nn\\
&\q-\cR^{-1}\cS_2(\h\Si^H)^{-1}
\big\{\Si^H\cL^\top X+\Si^H\cS_3^H\Si^HX+\Si^H\cS_3\f-\b \big\},\label{bar-v}
\end{align}
where $\Si^H\in C(0,T;\dbS^n_-)$, $(\f,\b)\in  L^2_{\dbF}(\Om;C([0,T];\dbR^n))\times L^2_{\dbF}(0,T;\dbR^n)$
and $X\in L^2_{\dbF}(\Om;C([0,T];\dbR^n))$ are the unique solutions to  Riccati equation \rf{BLQ-Riccati-Equation},
BSDE \rf{BLQ-BSDE} and SDE \rf{BLQ-fSDE}, respectively.
Moreover, the value function $\bar U$ of {\rm Problem (BLQ)} is given explicitly by
\begin{align}
\bar U(\xi)&= -\lan \Si^H(0)g,\,g\ran-2\lan g,\f(0)\ran
-\dbE\lan H(T)\xi,\xi\ran+\dbE\int_0^T\Big\{\lan \cN^H(\h\Si^H)^{-1}\b,\,\b\ran
\nn\\
&\q +2\lan (S_3^H)^\top(\h\Si^H)^{-1}\b,\f\ran-\lan [(\cS^H_3)^\top(\h\Si^H)^{-1}\Si \cS^H_3
+(\cS^H_1)^\top\cR^{-1}\cS^H_1]\f,\,\f\ran\Big\}ds.\label{U-value}
\end{align}
\end{proposition}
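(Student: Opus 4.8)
The plan is to follow the completion-of-squares / decoupling strategy that underlies \cite[Theorem~6.3]{Sun-Wu-Xiong2021}, adapted to accommodate the nonhomogeneous data $\si$ and $g$. First I would reduce Problem~(BLQ) to the homogeneous-weight case by using the auxiliary ODE \rf{H-equation}: applying It\^o's formula to $\lan H(s)Y(s),Y(s)\ran$ along the dynamics \rf{backward-system}, the drift term $\cQ$ in the running cost is absorbed and $\lan\cG Y(0),Y(0)\ran$ is cancelled (recall $H(0)=-\cG$), at the price of replacing $\cN,\cS_1,\cS_3$ by the $H$-shifted versions $\cN^H,\cS_1^H,\cS_3^H$ defined in \rf{def-NH} and generating an extra linear-in-$(Y,Z)$ term coming from the cross term $2\lan H Y,\si\ran$. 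This rewrites $U(\xi;u_2)$ as a functional with no $Y$-quadratic and no $Y(0)$-quadratic terms but with an inhomogeneous linear part; crucially, the uniform concavity \rf{uniform-concave-condition} is preserved (it only involves $U_0$, the $\si\equiv0,g=0$ functional), so $\cR\ll0$ and the quadratic form in $(u_2,Z)$ is uniformly concave. Existence and uniqueness of the optimizer then follow from this strict concavity plus coercivity, exactly as in the homogeneous theory.

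Next I would derive the stationarity condition. Perturbing $u_2\mapsto u_2+\e\, w$ with $w\in\cU_2$, writing the corresponding variation $(Y^w,Z^w)$ of the BSDE (which is linear, hence the variation solves the homogeneous-$\si$ BSDE driven by $\cB w$ with zero terminal value), and setting the G\^ateaux derivative to zero, one obtains a coupled forward-backward system: the adjoint equation is a (forward) SDE for a process $X$, and the optimality condition reads, schematically, $\cR u_2 + \cB^\top X + (\text{terms in }Y,Z) = 0$. The decoupling ansatz is $Y = -\Si^H X - \f$ (equivalently $X$ is recovered from $(Y,\f)$ via $\h\Si^H=I+\Si^H\cN^H$, whose invertibility is guaranteed in the cited setup and must be re-checked to survive the nonhomogeneous perturbation — it does, since $\Si^H$ and $\cN^H$ are unchanged). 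Substituting this ansatz, matching $ds$-coefficients forces $\Si^H$ to solve the Riccati equation \rf{BLQ-Riccati-Equation} and $\f$ (together with its martingale part $\b$) to solve the BSDE \rf{BLQ-BSDE}; matching the remaining data pins down $X$ as the solution of the SDE \rf{BLQ-fSDE} with $X(0)=g$. Back-substituting the relation between $Y,Z,X,\f,\b$ into the stationarity identity yields precisely the feedback formula \rf{bar-v}.

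To finish, I would prove this candidate is genuinely optimal and compute the value. Because the functional is, after the $H$-reduction, a strictly concave quadratic (uniformly, by $\cR\ll0$ and the structure condition), the stationary point is the unique global maximizer; alternatively one runs the standard completion-of-squares verification: for arbitrary $u_2\in\cU_2$, apply It\^o to $\lan\Si^H X,X\ran$ and to $\lan X,\f\ran$ along the closed-loop dynamics, collect all terms, and show $U(\xi;u_2)$ equals the claimed $\bar U(\xi)$ in \rf{U-value} minus a nonnegative quadratic in $u_2-\bar u_2$ (nonnegativity being exactly the content of $-\cR\gg0$). The value formula \rf{U-value} then falls out by evaluating the It\^o expansions at the endpoints: the $-\lan\Si^H(0)g,g\ran-2\lan g,\f(0)\ran$ terms come from $s=0$ with $X(0)=g$, the $-\dbE\lan H(T)\xi,\xi\ran$ term comes from the $H$-reduction boundary contribution at $s=T$ with $Y(T)=\xi$, and the integral terms are the residual running-cost contributions.

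\textbf{Main obstacle.} The genuinely delicate step is the algebra of the decoupling substitution: verifying that, after inserting $Y=-\Si^H X-\f$, \emph{all} the $X$-quadratic terms cancel iff $\Si^H$ solves \rf{BLQ-Riccati-Equation}, and simultaneously that the inhomogeneous terms regroup exactly into the drifts of \rf{BLQ-BSDE} and \rf{BLQ-fSDE} with the stated $(\h\Si^H)^{-1}$ factors. Bookkeeping the $H$-shifted coefficients $\cN^H,\cS_1^H,\cS_3^H$ through the Schur-complement manipulations is where sign errors and mismatched terms are most likely to hide, and it is the part that has no shortcut beyond careful computation mirroring \cite[Section~6]{Sun-Wu-Xiong2021}.
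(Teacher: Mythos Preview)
Your proposal is correct and follows essentially the same approach as the paper: the paper omits the proof entirely, noting that it is similar to \cite[Theorem~6.3]{Sun-Wu-Xiong2021}, and your plan is precisely to adapt that argument (the $H$-reduction, the decoupling ansatz $Y=-\Si^H X-\f$, and the completion-of-squares verification) to accommodate the nonhomogeneous data $\si$ and $g$. The obstacle you flag --- the bookkeeping of the $H$-shifted coefficients through the decoupling substitution --- is indeed the only real work, and there is no shortcut beyond the careful computation you describe.
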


By \autoref{Prop:backward}, we generalize the results obtained in  Sun, Wu, and Xiong  \cite{Sun-Wu-Xiong2021}
to the case with nonhomogeneous terms.
Since the proof of \autoref{Prop:backward} is similar to that of \cite[Theorem 6.3]{Sun-Wu-Xiong2021},
we omit it here. Even though, this extension will serve as a foundation for finding a Stackelberg equilibrium of Problem (SG)
(see \autoref{Thm:SOP-control}).
A key point in \cite{Sun-Wu-Xiong2021} is that by some transformation techniques,
the assumptions $\cQ\equiv 0$ and $\cG=0$ can be imposed without loss of generality.
However, the power of this approach is  very limited for our problem,
because as a trade off, the associated Riccati equation  depends additionally on an auxiliary function $H$
and the auxiliary function $H$ will cause some technical difficulties in exploring the connection
between Problems (SG) and (NG).
In Subsection \ref{sub:BLQ-t}, a new representation for the optimal control of Problem (BLQ) will be presented
and the auxiliary function $H$ will be removed.


\section{Stackelberg games}\label{Sec:SOG}
In this section, we shall  establish a general approach for finding the Stackelberg equilibrium of Problem (SG).
The  procedure will be divided into two steps.

\subsection{The follower's problem}
First, we are going to solve the follower's problem.
For any fixed control $u_2\in\cU_2$, the follower's problem (denoted by Problem (FLQ)) can be stated as follows:
Consider the
state equation
\bel{state-follower}\left\{\begin{aligned}
   dX(s) &=\big\{A(s)X(s) +B_1(s)u_1(s)+ B_2(s)u_2(s) \big\}ds\\
         &~\hp{=} +\big\{C(s)X(s)+D_1(s)u_1(s)+ D_2(s)u_2(s)\big\}dW(s),\q s\in[0,T], \\
    X(0) &= x,
\end{aligned}\right.\ee
and the cost functional
\begin{align}\label{cost-follower}
&\cJ_{u_2}(x;u_1)\equiv J(x;u_1,u_2)= \dbE\Big\{\lan GX(T),X(T)\ran
 +\int_0^T\[\lan QX,X\ran +\lan R_{1}u_1,u_1\ran+ \lan R_{2}u_2,u_2\ran\] ds \Big\}.
\end{align}
The follower (Player 1) wishes to find a control $\bar u_1\in\cU_1$, depending on $u_2$ and $x$, such that
\bel{}
\cJ_{u_2}(x;\bar u_1)=\inf_{u_1\in\cU_1}\cJ_{u_2}(x;u_1).
\ee
To find an optimal control of Problem (FLQ), we introduce the following assumption.

\begin{taggedassumption}{(H3)}\label{ass:H3}
There exists a constant $\l>0$ such that
$$
J(0;u_1,0)\ges \l\dbE\int_0^T|u_1(s)|^2ds,\q \forall u_1\in\cU_1.
$$
\end{taggedassumption}
Then by \cite[Corollary 4.7]{Sun-Li-Yong2016}, we have the following results.

\begin{proposition}\label{prop:follower}
Let {\rm\ref{ass:H1}--\ref{ass:H3}} hold.
Then for any  $u_2\in\cU_2$ and  $x\in\dbR^n$,
{\rm Problem (FLQ)} admits a unique optimal control $\bar u_1\in\cU_1$,
which admits the following closed-loop representation:
\bel{closed-loop-repre-X}
\bar u_1(s)=\bar\a_1(s;u_2,x)\equiv\Th(s) \bar X(s)+v(s)\equiv\Th(s) \bar X(s;x,u_2)+v(s;u_2),  \q s\in[0,T],
\ee
where %
      $$\begin{aligned}
        \Th &= -(R_{1}+D_1^\top P_1D_1)^{-1}(B_1^\top P_1+D_1^\top P_1C),\\
          v &= -(R_{1}+D_1^\top P_1D_1)^{-1}(B_1^\top Y+D_1^\top Z+D_1^\top P_1 D_2u_2),
      \end{aligned}$$
with $P_1\in C([0,T];\dbS^n)$ being the unique strongly regular solution of the  Riccati equation:
\bel{Ric-1}\left\{\begin{aligned}
  & \dot P_1+P_1A+A^\top P_1+C^\top P_1C+Q -(P_1B_1+C^\top P_1D_1)\\
  & \hp{\dot P}\q \times(R_{1}+D_1^\top P_1D_1)^{-1}(B_1^\top P_1+D_1^\top P_{1}C)=0, \\
  & P_1(T)=G,
\end{aligned}\right.\ee
$(Y,Z)\equiv(Y(\cd;u_2),Z(\cd;u_2))$ solving the BSDE:
\bel{eta-beta}\left\{\begin{aligned}
         dY(s) &=-\big[(A+B_1\Th)^\top Y +(C+D_1\Th)^\top Z +(C+D_1\Th)^\top P_1D_2u_2\\
                  &\hp{=-\big[} +P_1B_2u_2  \big]ds +Z dW(s), \q s\in[0,T],\\
          Y(T) &=0,
\end{aligned}\right.\ee
and $\bar X\equiv\bar X(\cd;x,u_2)$ satisfying the closed-loop system:
\bel{state-u2-star}\left\{\begin{aligned}
   d\bar X(s) &=\big[(A +B_1\Th)\bar X+B_1v+ B_2u_2 \big]ds\\
         &~\hp{=} +\big[(C +D_1\Th)\bar X+D_1v+ D_2u_2 \big]dW(s),\q s\in[0,T], \\
    \bar X(0) &= x.
\end{aligned}\right.\ee
\end{proposition}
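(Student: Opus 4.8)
The plan is to regard Problem (FLQ), for a fixed pair $(x,u_2)\in\dbR^n\times\cU_2$, as a \emph{nonhomogeneous} stochastic LQ optimal control problem in the single control variable $u_1$: in \rf{state-follower} the terms $B_2u_2$ and $D_2u_2$ act as exogenous inhomogeneities, while in \rf{cost-follower} the summand $\lan R_2u_2,u_2\ran$ is a constant irrelevant to the minimization over $u_1$. The map $u_1\mapsto\cJ_{u_2}(x;u_1)$ is a continuous quadratic functional on the Hilbert space $\cU_1$ whose purely $u_1$-quadratic part is precisely the form $u_1\mapsto J(0;u_1,0)$; hence, by \ref{ass:H3}, it is uniformly convex with modulus $\l$ \emph{independent of $x$ and $u_2$}. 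Continuity plus uniform convexity then yields, by standard convex analysis on Hilbert spaces, a unique minimizer $\bar u_1=\bar\a_1(u_2,x)\in\cU_1$, characterized by the vanishing of the G\^ateaux derivative of $\cJ_{u_2}(x;\cd)$.

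To convert this abstract minimizer into the closed-loop form \rf{closed-loop-repre-X}, I would invoke the indefinite-SLQ theory of Sun, Li and Yong \cite{Sun-Li-Yong2016} (in particular \cite[Corollary 4.7]{Sun-Li-Yong2016}): the uniform convexity in \ref{ass:H3} is equivalent to the \emph{strong regularity} of the Riccati equation \rf{Ric-1}, i.e., to the existence of a (necessarily unique) solution $P_1\in C([0,T];\dbS^n)$ with $R_{1}+D_1^\top P_1D_1\gg0$. In particular $(R_{1}+D_1^\top P_1D_1)^{-1}\in L^\i(0,T;\dbS^{m_1})$, so the feedback gain $\Th$ is a well-defined bounded function. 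With $P_1$ in hand, \rf{eta-beta} is a linear BSDE with bounded coefficients and an $L^2_\dbF$ inhomogeneity driven by $u_2$; it therefore admits a unique adapted solution $(Y,Z)\in L^2_\dbF(\Om;C([0,T];\dbR^n))\times L^2_\dbF(0,T;\dbR^n)$, whence $v\in\cU_1$ is well defined and depends on $u_2$ and $x$.

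The core of the argument is a \emph{completion of squares}. Applying It\^o's formula to $s\mapsto\lan P_1(s)X(s),X(s)\ran+2\lan Y(s),X(s)\ran$ along an arbitrary admissible pair $(X,u_1)$ of \rf{state-follower}, integrating over $[0,T]$, taking expectations, and using \rf{Ric-1} and \rf{eta-beta} to cancel all terms that are quadratic or linear in $X$, one arrives at
$$
\cJ_{u_2}(x;u_1)=\lan P_1(0)x,x\ran+2\lan Y(0),x\ran+c
+\dbE\int_0^T\big\lan(R_{1}+D_1^\top P_1D_1)(u_1-\Th X-v),\,u_1-\Th X-v\big\ran ds,
$$
where $c=c(x,u_2)$ collects all remaining terms, none involving $u_1$. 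Since $R_{1}+D_1^\top P_1D_1\gg0$, the right-hand side is minimized exactly when $u_1(s)=\Th(s)X(s)+v(s)$ for a.e.\ $s\in[0,T]$, $\dbP$-a.s.; substituting this feedback into \rf{state-follower} produces the closed-loop system \rf{state-u2-star}, which has a unique solution $\bar X=\bar X(\cd;x,u_2)$, and then $\bar u_1=\Th\bar X+v$ is the optimal control. This also re-identifies the abstract minimizer, confirming uniqueness.

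The step I expect to be the main obstacle is the bookkeeping in the completion of squares that pins down the coefficients of the BSDE \rf{eta-beta}: one must verify that its drift is exactly what absorbs every $X$-linear contribution produced by the inhomogeneities $B_2u_2$ and $D_2u_2$, and that its terminal value is $Y(T)=0$ (which holds because the terminal cost $\lan GX(T),X(T)\ran$ contributes no linear-in-$X$ term). Once the algebraic identities from \cite{Sun-Li-Yong2016} are carried through with these nonhomogeneous terms retained, the remaining verifications are routine.
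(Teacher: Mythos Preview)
Your proposal is correct and follows the same approach as the paper, which simply cites \cite[Corollary 4.7]{Sun-Li-Yong2016}; you have additionally sketched the completion-of-squares argument underlying that reference, which is accurate. One small slip: you write that $v$ ``depends on $u_2$ and $x$'', but the BSDE \rf{eta-beta} has terminal datum $Y(T)=0$ and coefficients depending only on $u_2$, so $v=v(\cd\,;u_2)$ is independent of $x$, as the statement indicates.
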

\begin{remark}
Recall from \cite[Theorem 4.3]{Sun-Li-Yong2016} that the unique strongly regular solution $P_1$ of
Riccati equation \rf{Ric-1} satisfies
\bel{hat-R1}
R_{1}+D_1^\top P_1D_1\gg 0.
\ee
Since the optimal control $\bar u_1$ admits the closed-loop representation \rf{closed-loop-repre-X},
we have
\bel{}
J(x;\bar u_1(u_2,x),u_2)=J(x;\bar\a_1(u_2,x),u_2)=\inf_{u_1\in\cU_1} J(x;u_1,u_2),\q \forall u_2\in\cU_2,\, x\in\dbR^n.
\ee
\end{remark}

\subsection{The leader's problem and  Stackelberg equilibrium}
For any $x\in\dbR^n$ and $u_2\in\cU_2$, the follower's unique optimal control $\bar u_1$
can be given by \rf{closed-loop-repre-X}.
Knowing this, the leader's problem (denoted by Problem (LLQ)) becomes:
Find a control $\bar u_2\in\cU_2$ such that
\bel{}
J(x;\bar \a_1(\bar u_2,x),\bar u_2)=\sup_{u_2\in\cU_2}J(x;\bar \a_1( u_2,x), u_2).
\ee
From the facts that $\bar\a_1( u_2,x)=\Th \bar X(x,u_2)+v(u_2)$,
$\bar X(x,u_2)$ is the solution of \rf{state-u2-star}
and $v(u_2)$ is determined by BSDE \rf{eta-beta},
we see that Problem (LLQ) is an optimal control problem for  forward-backward SDEs.

\ms

By some straightforward calculations,  $J(x;\bar \a_1( u_2,x), u_2)$ can be rewritten as
\begin{align}
&J(x;\bar \a_1( u_2,x),u_2)=J(x;\Th \bar X+v,u_2)\nn\\
&\q= \lan P_1(0)x,x\ran+2\lan Y(0),x\ran+\dbE\Big\{\int_0^T\[\lan P_1D_2u_2,D_2u_2\ran+2\lan Y,B_2u_2\ran \nn\\
&\qq\, +2\lan Z,D_2u_2\ran -\big\lan(R_{1}+D_1^\top P_1D_1)^{-1}(B_1^\top Y+D_1^\top Z+D_1^\top P_1D_2u_2),\nn\\
&\qq\,\, (B_1^\top Y+D_1^\top Z+D_1^\top P_1D_2u_2)\big\ran+\lan R_{2} u_2, u_2\ran\] ds \Big\}.\label{cost-u2-star}
\end{align}
It shows that $J(x;\bar \a_1( u_2,x),u_2)$ is independent of the state process $\bar X$.
Noticing this key point, Problem (LLQ) is converted into an LQ optimal control problem for BSDEs
(with nonhomogeneous terms), which is a precondition for our subsequent analysis.

\ms

For simplicity, we denote
\bel{def-barA}\left\{\begin{aligned}
&\h R_1= D_1^\top P_1 D_1+R_1,\q \cA=(P_1B_1+C^\top P_1D_1)\h R_1^{-1}B_1^\top-A^\top,\\
&\cB=[(P_1B_1+C^\top P_1D_1)\h R_1^{-1}D_1^\top-C^\top]P_1 D_2-P_1B_2,\\
&\cC=(P_1B_1+C^\top P_1D_1)\h R_1^{-1}D_1^\top-C^\top,
\end{aligned}\right.\ee
and
\bel{def-barR}\left\{\begin{aligned}
  &\cR= D_2^\top P_1D_2-D_2^\top P_1D_1\h R_1^{-1}D_1^\top P_1 D_2+R_2,\\
  &\cQ=-B_1\h R_1^{-1}B_1^\top,\q \cN=-D_1\h R_1^{-1}D_1^\top,\q\cS_3= -D_1\h R_1^{-1}B_1^\top,\\
  &\cS_2= D_2^\top-D^\top_2P_1D_1\h R_1^{-1}D_1^\top,\q\cS_1= B_2^\top-D^\top_2P_1D_1\h R_1^{-1}B_1^\top.
\end{aligned}\right.\ee
With the above notations,  BSDE \rf{eta-beta} and  functional \rf{cost-u2-star} can be rewritten as
\bel{state-LLQ}\left\{\begin{aligned}
         dY(s) &=\big[\cA(s) Y(s) +\cB(s)u_2(s)+\cC(s)Z(s) \big]ds+Z(s) dW(s), \q s\in[0,T],\\
          Y(T) &=0,
\end{aligned}\right.\ee
and
\begin{align}\label{utility-LLQ}
J(x;\bar \a_1( u_2,x),u_2)
&=\lan P_1(0)x,x\ran+\dbE\Big\{\int_0^T\big[\lan \cR u_2, u_2\ran+\lan \cQ Y,Y\ran+\lan \cN Z,Z\ran +2\lan \cS_1 Y, u_2\ran\nn\\
&\q+2\lan \cS_2 Z,u_2\ran+2\lan \cS_3 Y,Z\ran\big] ds+2\lan Y(0),x\ran\Big\},
\end{align}
respectively.
Let $g=x$, $\cG=0$ and $\si\equiv 0$ in \rf{utility-backward} and \rf{backward-system}, respectively.
Comparing \rf{utility-LLQ} with \rf{utility-backward} yields that
\bel{cost-u2-U}
U(0;u_2)+\lan P_1(0)x,x\ran=J(x;\bar \a_1( u_2,x),u_2),
\ee
where $U$ is defined by \rf{utility-backward}.
Recall the definition \rf{def-NH} of $\cN^H$, $\cS_1^H$, $\cS_3^H$ and $\cL$.
By \autoref{Prop:backward}, we have the following result.

\begin{taggedassumption}{(H4)}\label{ass:H4}
There exists a constant $\l>0$ such that
$$
J(0;\bar \a_1( u_2,0),u_2)\les- \l\dbE\int_0^T|u_2(s)|^2ds,\q \forall u_2\in\cU_2.
$$
\end{taggedassumption}

\begin{proposition}\label{prop:leader}
Let {\rm\ref{ass:H1}}--{\rm\ref{ass:H4}} hold.
Then {\rm Problem (LLQ)} admits a unique optimal control:
\begin{align}
\bar u_2&=\cR^{-1}\big\{[\cB^\top+\cS^H_1\Si^H]-\cS_2(\h\Si^H)^{-1}
[\Si^H\cL^\top +\Si^H\cS_3^H\Si^H]\big\}\bar\cX,\label{bar-u2}
\end{align}
where $\Si^H$ is the unique solution of Riccati equation \rf{BLQ-Riccati-Equation}
and $\bar\cX$ is uniquely determined by the following SDE:
\bel{LLQ-fSDE}
\left\{
\begin{aligned}
d\bar\cX(s)&=\big\{(\cS^H_1)^\top\cR^{-1}\cB^\top+(\cS^H_1)^\top\cR^{-1}\cS^H_1\Si^H
+(\cS^H_3)^\top(\h\Si^H)^{-1}[\Si^H\cL^\top+\Si^H\cS^H_3\Si^H]\\
&\q-\cA^\top\big\}\bar\cX ds+\big\{\cN^H(\h\Si^H)^{-1}[\Si^H\cL^\top+\Si^H\cS^H_3\Si^H]
-\cS^H_3\Si^H-\cL^\top\big\}\bar\cX dW(s), \\
\bar\cX(0)&=x.
\end{aligned}\right.
\ee
\end{proposition}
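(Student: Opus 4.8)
The plan is to recognize Problem (LLQ) as a special case of Problem (BLQ) and then invoke \autoref{Prop:backward} directly. The first step is to make the identification explicit: with the choices $g=x$, $\cG=0$, $\si\equiv0$ in \rf{backward-system}--\rf{utility-backward}, the state equation \rf{state-LLQ} is exactly \rf{backward-system} and, by \rf{cost-u2-U}, maximizing $J(x;\bar\a_1(u_2,x),u_2)$ over $u_2\in\cU_2$ is equivalent to maximizing $U(0;u_2)$ — the additive constant $\lan P_1(0)x,x\ran$ does not affect the argmax. Thus Problem (LLQ) is Problem (BLQ) with terminal state $\xi=0$.

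The second step is to verify that assumption \ref{ass:B3} holds in this setting, so that \autoref{Prop:backward} applies. The regularity requirements \rf{B1-1} on $\cA,\cB,\cC,\cR,\cQ,\cN,\cS_1,\cS_2,\cS_3$ follow from \ref{ass:H1}--\ref{ass:H2} together with the fact that $P_1\in C([0,T];\dbS^n)$ and $\h R_1^{-1}=(R_1+D_1^\top P_1D_1)^{-1}\in L^\i(0,T;\dbR^{m_1\times m_1})$ by the strong regularity \rf{hat-R1} of $P_1$; the nonhomogeneous data are trivial here ($\si\equiv0$, $g=x$, $\cG=0$). The uniform concavity condition \rf{uniform-concave-condition}, i.e. $U_0(0;u_2)\les-\l\dbE\int_0^T|u_2|^2ds$, is exactly assumption \ref{ass:H4}: by \rf{cost-u2-U} with $x=0$ (so the constant vanishes) one has $U_0(0;u_2)=U(0;u_2)=J(0;\bar\a_1(u_2,0),u_2)$, and \ref{ass:H4} gives the required bound.

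The third step is to read off the conclusion. By \autoref{Prop:backward}, since $\xi=0$, the BSDE \rf{BLQ-BSDE} has data $\f(T)=0$ and $\si\equiv0$, so its unique solution is $(\f,\b)=(0,0)$. Substituting $\f=0$, $\b=0$ into the SDE \rf{BLQ-fSDE} collapses it to precisely \rf{LLQ-fSDE} with $X(0)=g=x$, which we rename $\bar\cX$; and substituting $\f=0$, $\b=0$, $X=\bar\cX$ into the optimal-control formula \rf{bar-v} yields exactly \rf{bar-u2}. Uniqueness of $\bar u_2$ and of $\bar\cX$ is inherited from \autoref{Prop:backward}. I do not anticipate a genuine obstacle here — the only point requiring care is the bookkeeping translation between the abstract coefficients of \rf{backward-system}--\rf{utility-backward} and the concrete ones in \rf{def-barA}--\rf{def-barR}, and the observation that the homogeneous terminal condition forces $(\f,\b)\equiv(0,0)$, which is what makes the leader's optimal control purely a state feedback of $\bar\cX$.
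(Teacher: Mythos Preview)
Your proposal is correct and follows essentially the same approach as the paper: identify Problem (LLQ) as Problem (BLQ) with $\xi=0$, $\si\equiv0$, $\cG=0$, $g=x$; verify \ref{ass:B3} (regularity from \ref{ass:H1}--\ref{ass:H2} and $\h R_1^{-1}\in L^\i$, uniform concavity from \ref{ass:H4} via \rf{cost-u2-U} at $x=0$); apply \autoref{Prop:backward}; and then observe that the homogeneous data force $(\f,\b)\equiv(0,0)$, which collapses \rf{BLQ-fSDE} and \rf{bar-v} to \rf{LLQ-fSDE} and \rf{bar-u2}.
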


\begin{proof}
Note from \rf{cost-u2-U} that
$$
U_0(0;u_2)=J(0;\bar \a_1( u_2,0),u_2),\q\forall u_2\in\cU_2,
$$
where $U_0$ is the utility functional $U$, defined by \rf{utility-backward}, with $\si=0$ and $g=0$.
Then assumption \ref{ass:H4} implies that \rf{uniform-concave-condition} holds.
Moreover, from the fact $\h R_1=D_1^\top P_1 D_1+R_1\gg 0$, we get that the coefficients $\cA,\cB,\cC$
and the weighting matrices $\cG, \cR, \cQ,\cN,\cS_1,\cS_2,\cS_3$ satisfy the condtion \rf{B1-1}.
Thus under \ref{ass:H1}--\ref{ass:H4}, the assumption \ref{ass:B3} holds.
Then by \autoref{Prop:backward}, Problem (LLQ) admits a unique optimal control
\begin{align}
\bar u_2&=\cR^{-1}\big\{[\cB^\top+\cS^H_1\Si^H]X+\cS^H_1\f\big\}\nn\\
&\q-\cR^{-1}\cS_2(\h\Si^H)^{-1}
\big\{\Si^H\cL^\top X+\Si^H\cS_3^H\Si^HX+\Si^H\cS_3\f-\b \big\},\label{LLQ-P1}
\end{align}
where $X$ and $(\f,\b)$ are the unique solutions of \rf{BLQ-fSDE} and \rf{BLQ-BSDE},
with $\si\equiv0$, $g=x$ and $\xi=0$, respectively.
Note that when $\si\equiv0$ and $\xi=0$,
the unique solution of BSDE \rf{BLQ-BSDE} is explicitly given by $(\f,\b)\equiv(0,0)$.
Using the facts $(\f,\b)\equiv(0,0)$ and $g=x$,
SDE \rf{BLQ-fSDE} can be rewritten as
\bel{}
\left\{
\begin{aligned}
dX(s)&=\big\{(\cS^H_1)^\top\cR^{-1}\cB^\top+(\cS^H_1)^\top\cR^{-1}\cS^H_1\Si^H
+(\cS^H_3)^\top(\h\Si^H)^{-1}[\Si^H\cL^\top+\Si^H\cS^H_3\Si^H]\\
&\q-\cA^\top\big\}X ds+\big\{\cN^H(\h\Si^H)^{-1}[\Si^H\cL^\top+\Si^H\cS^H_3\Si^H]
-\cS^H_3\Si^H-\cL^\top\big\}X dW(s), \\
X(0)&=x.
\end{aligned}\right.
\ee
It follows that $X=\bar\cX$. Substituting $(\f,\b)\equiv(0,0)$ and $X=\bar\cX$ into \rf{LLQ-P1},
we get \rf{bar-u2},
which completes the proof.
\end{proof}

We conclude this section with the following result.

\begin{theorem}\label{Thm:SOP-control}
Let {\rm\ref{ass:H1}}--{\rm\ref{ass:H4}} hold.
Then for any initial state $x\in\dbR^n$, the control pair $(\bar u_1,\bar u_2)\equiv(\bar\a_1(\bar u_2,x),\bar u_2)$,
obtained in {\rm\autoref{prop:follower}} and {\rm\autoref{prop:leader}},
is a Stackelberg equilibrium of {\rm Problem (SG)}.
\end{theorem}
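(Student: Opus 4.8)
The plan is to assemble the Stackelberg equilibrium directly from the two sub-problems that have already been solved. The definition of a Stackelberg equilibrium (\autoref{def:S-junheng}) requires three things of the pair $(\bar u_1,\bar u_2)=(\bar\a_1(\bar u_2,x),\bar u_2)$: first, that $\bar u_1$ minimizes $J(x;\cd,\bar u_2)$ over $\cU_1$; second, that the common value equals $\sup_{u_2}\inf_{u_1}J(x;u_1,u_2)$; and third, implicitly, that this sup is attained at $\bar u_2$. The first is immediate from \autoref{prop:follower}: for \emph{every} $u_2\in\cU_2$, $\bar\a_1(u_2,x)$ is the unique minimizer of $\cJ_{u_2}(x;\cd)=J(x;\cd,u_2)$, so in particular $\inf_{u_1\in\cU_1}J(x;u_1,\bar u_2)=J(x;\bar\a_1(\bar u_2,x),\bar u_2)=J(x;\bar u_1,\bar u_2)$. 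That disposes of the left-hand equality.

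For the right-hand equality, the key is the identity \rf{cost-u2-U}, which says $J(x;\bar\a_1(u_2,x),u_2)=U(0;u_2)+\lan P_1(0)x,x\ran$ for all $u_2\in\cU_2$, together with the fact (from \autoref{prop:follower}) that $\inf_{u_1}J(x;u_1,u_2)=J(x;\bar\a_1(u_2,x),u_2)$. Hence
$$
\sup_{u_2\in\cU_2}\inf_{u_1\in\cU_1}J(x;u_1,u_2)
=\sup_{u_2\in\cU_2}J(x;\bar\a_1(u_2,x),u_2)
=\lan P_1(0)x,x\ran+\sup_{u_2\in\cU_2}U(0;u_2)
=\lan P_1(0)x,x\ran+\bar U(0),
$$
where $\bar U$ is the value of Problem (BLQ) from \autoref{Prop:backward}. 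I then need to check that the $\bar u_2$ produced in \autoref{prop:leader} actually attains this supremum. But this is exactly the content of \autoref{prop:leader}: it asserts $\bar u_2$ is the (unique) optimal control of Problem (LLQ), i.e.\ $J(x;\bar\a_1(\bar u_2,x),\bar u_2)=\sup_{u_2\in\cU_2}J(x;\bar\a_1(u_2,x),u_2)$. Combining the last two displays with $J(x;\bar u_1,\bar u_2)=J(x;\bar\a_1(\bar u_2,x),\bar u_2)$ gives $J(x;\bar u_1,\bar u_2)=\sup_{u_2}\inf_{u_1}J(x;u_1,u_2)$, which is the right-hand equality. Chaining the two equalities yields Definition~\ref{def:S-junheng}.

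There is one bookkeeping point that deserves care rather than real difficulty: verifying that the hypotheses of \autoref{Prop:backward} and hence of \autoref{prop:leader} are in force. That is precisely where assumptions \ref{ass:H1}--\ref{ass:H4} enter --- \ref{ass:H1}--\ref{ass:H2} give well-posedness of the game, \ref{ass:H3} gives the follower's uniform convexity (invoked via \autoref{prop:follower} and the strong regularity \rf{hat-R1} of $P_1$), and \ref{ass:H4} translates into the uniform concavity condition \rf{uniform-concave-condition} through the identity $U_0(0;u_2)=J(0;\bar\a_1(u_2,0),u_2)$ already noted in the proof of \autoref{prop:leader}. Since \autoref{prop:leader} has already carried out this translation, I can simply cite it. The only genuinely substantive ingredient has therefore already been established upstream; the proof of \autoref{Thm:SOP-control} itself is a short assembly, and the main (mild) obstacle is just making sure the two value identities are threaded together in the right order so that the middle term $J(x;\bar u_1,\bar u_2)$ sits correctly between $\inf_{u_1}J(x;u_1,\bar u_2)$ and $\sup_{u_2}\inf_{u_1}J(x;u_1,u_2)$.
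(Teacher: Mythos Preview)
Your proposal is correct and follows essentially the same approach as the paper: both proofs simply combine \autoref{prop:follower} (which gives $\inf_{u_1}J(x;u_1,u_2)=J(x;\bar\a_1(u_2,x),u_2)$ for every $u_2$) with \autoref{prop:leader} (which gives $J(x;\bar\a_1(\bar u_2,x),\bar u_2)=\sup_{u_2}J(x;\bar\a_1(u_2,x),u_2)$) and read off the Stackelberg equilibrium from \autoref{def:S-junheng} and \autoref{def:S-junheng*}. Your detour through $U(0;u_2)$ and $\bar U(0)$ is not needed---the paper simply chains the two displayed equalities directly---but it is harmless and does no damage to the argument.
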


\begin{proof}
From \autoref{prop:follower} and \autoref{prop:leader},
we see that the control pair $(\bar u_1,\bar u_2)=(\bar\a_1(\bar u_2,x),\bar u_2)$ satisfies
\begin{align}
J(x;\bar \a_1(u_2,x), u_2)&=\inf_{u_1\in\cU_1}J(x;u_1, u_2),\q \forall u_2\in\cU_2,\q\\
J(x;\bar \a_1(\bar u_2,x),\bar u_2)&=\sup_{u_2\in\cU_2}J(x;\bar \a_1(u_2,x), u_2).
\end{align}
It follows from \autoref{def:S-junheng} and \autoref{def:S-junheng*} that $(\bar u_1,\bar u_2)=(\bar\a_1(\bar u_2,x),\bar u_2)$
is a Stackelberg equilibrium of Problem (SG).
\end{proof}

\begin{remark}
By \cite[Theorem 3.1]{Sun-Wu-Xiong2021}, the following condition is necessary for open-loop solvability
 of Problem (LLQ):
\bel{necessary-LLQ}
U_0(0;u_2)=J(0;\bar \a_1( u_2,0),u_2)\les 0,\q \forall u_2\in\cU_2.
\ee
Then assumption \ref{ass:H4} is almost necessary for the existence of
an optimal control of Problem (LLQ).
When Problem (SG) only satisfies \ref{ass:H1}--\ref{ass:H3} and \rf{necessary-LLQ},
one can apply the  perturbation approach,
developed in \cite{Sun-Li-Yong2016,Wang-Sun-Yong2019,Sun-Wang-Wu2021},
to find the Stackelberg equilibrium (if exists).
\end{remark}

\section{Further analysis of the Stackelberg games}\label{sec:Further-Analysis}
In \autoref{Thm:SOP-control}, it has been shown that under  {\rm\ref{ass:H1}}--{\rm\ref{ass:H4}},
Problem (SG) admits a Stackelberg equilibrium $(\bar u_1,\bar u_2)=(\bar\a_1(\bar u_2,x),\bar u_2)$.
However, the assumption \ref{ass:H4} is usually difficult to verify,
because it is involved with the optimal strategy $\bar \a_1(\cd,\cd)$ of the follower.
In this section, we shall provide a new condition,
independent of $\bar \a_1(\cd,\cd)$, to ensure that \ref{ass:H4} holds.
Furthermore, under this condition, a closed-loop representation for the  Stackelberg equilibrium
of Problem (SG) is obtained by a closer investigation of backward stochastic LQ optimal control problems.

\subsection{Uniform concavity of the functional}
For any $t\in[0,T)$, we first introduce the following game problem over $[t,T]$:
Consider the state equation
\bel{statet}\left\{\begin{aligned}
   dX(s) &=\big\{A(s)X(s) +B_1(s)u_1(s) +B_2(s)u_2(s) \big\}ds\\
         &~\hp{=}+\big\{C(s)X(s) +D_1(s)u_1(s)+ D_2(s)u_2(s)\big\}dW(s),\q s\in[t,T], \\
    X(t) &= x,
\end{aligned}\right.\ee
and the criterion  functional
\bel{costt}
J(t,x;u_1,u_2)= \dbE\Big\{\lan GX(T),X(T)\ran
+\int_t^T\big[\lan QX,X\ran+\lan R_1 u_1,u_1\ran+\lan R_2u_2,u_2\ran\big]ds  \Big\},
\ee
where $u_i\in\cU_i[t,T]\equiv L^2_{\dbF}([t,T];\dbR^{m_i});i=1,2$. Then,
\bel{}
J(0,x;u_1,u_2)=J(x;u_1,u_2),\q \forall x\in\dbR^n,\, u_i\in\cU_i;\,i=1,2,
\ee
where $J(x;u_1,u_2)$, defined by \rf{cost}, is the  criterion  functional of Problem (SG).
The following result shows that under \ref{ass:H3},
the mapping $u_1\mapsto J(t,0;u_1,0)$ is uniformly convex for any $t\in[0,T)$.

\begin{lemma}\label{lem:convex}
Let {\rm\ref{ass:H1}--\ref{ass:H3}} hold. Then for any $t\in[0,T)$,
\bel{lem:convex-main}
J(t,0;u_1,0)\ges \l\dbE\int_t^T|u_1(s)|^2ds,\q \forall u_1\in\cU_1[t,T],
\ee
where $\l>0$ is the same as that in {\rm\ref{ass:H3}}.
\end{lemma}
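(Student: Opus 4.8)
The plan is to reduce the assertion on $[t,T]$ to the global assumption \ref{ass:H3} on $[0,T]$ by a zero-extension argument. Given $t\in[0,T)$ and an arbitrary $u_1\in\cU_1[t,T]$, I would define $\tilde u_1\in\cU_1$ by setting $\tilde u_1(s)=0$ for $s\in[0,t)$ and $\tilde u_1(s)=u_1(s)$ for $s\in[t,T]$. This $\tilde u_1$ is $\dbF$-progressively measurable and square-integrable, so it is an admissible control for Problem (SG) on $[0,T]$. The idea is then to compare $J(0,0;\tilde u_1,0)$ with $J(t,0;u_1,0)$ and apply \ref{ass:H3} to $\tilde u_1$.

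The key step is to observe what the state process does on $[0,t)$. With initial state $x=0$, control pair $(\tilde u_1,0)$, and $\tilde u_1\equiv 0$ on $[0,t)$, the state equation \rf{state} on $[0,t)$ becomes the linear homogeneous SDE $dX(s)=A(s)X(s)\,ds+C(s)X(s)\,dW(s)$ with $X(0)=0$; by uniqueness of solutions this forces $X(s)=0$ for all $s\in[0,t]$. Hence at time $t$ the state has returned to $0$, i.e. $X(t)=0$, and on $[t,T]$ the pair $(X,\tilde u_1,0)=(X,u_1,0)$ solves exactly the state equation \rf{statet} with initial datum $X(t)=0$. Feeding this into the cost functionals: the contribution of $[0,t)$ to $J(0,0;\tilde u_1,0)$ is $\dbE\int_0^t[\lan QX,X\ran+\lan R_1\tilde u_1,\tilde u_1\ran]\,ds=0$ since $X\equiv 0$ and $\tilde u_1\equiv 0$ there, so
$$
J(0,0;\tilde u_1,0)=\dbE\Big\{\lan GX(T),X(T)\ran+\int_t^T[\lan QX,X\ran+\lan R_1u_1,u_1\ran]\,ds\Big\}=J(t,0;u_1,0).
$$
Combining this identity with \ref{ass:H3} applied to $\tilde u_1$ gives
$$
J(t,0;u_1,0)=J(0;\tilde u_1,0)\ges\l\,\dbE\int_0^T|\tilde u_1(s)|^2\,ds=\l\,\dbE\int_t^T|u_1(s)|^2\,ds,
$$
which is exactly \rf{lem:convex-main}.

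I do not expect any genuine obstacle here; the only point requiring a little care is the justification that the zero-extended control keeps the state at the origin on $[0,t]$, which is immediate from pathwise uniqueness for linear SDEs under \ref{ass:H1}, together with the bookkeeping that the running cost vanishes on $[0,t)$ because both $X$ and the control are zero there. Everything else is a direct substitution into the definitions \rf{cost} and \rf{costt}.
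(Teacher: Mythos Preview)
Your proof is correct and follows essentially the same approach as the paper: zero-extend $u_1$ to $[0,T]$, observe that the resulting state vanishes on $[0,t]$ so that $J(t,0;u_1,0)=J(0;\tilde u_1,0)$, and then apply \ref{ass:H3}. The only difference is that you spell out explicitly why $X\equiv 0$ on $[0,t]$ via uniqueness for the homogeneous linear SDE, whereas the paper simply asserts the cost identity directly.
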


\begin{proof}
For any $t\in[0,T)$ and $u_1\in\cU_1[t,T]$, define
\bel{}
[u_1\oplus_t 0](s)\deq\left\{
\begin{aligned}
&u_1(s),\q s\in[t,T],\\
&\q 0,\q\q s\in[0,t).
\end{aligned}\right.
\ee
It is clearly seen that $u_1\oplus_t 0\in\cU_1[0,T]\equiv\cU_1$ and
\bel{J(ut)}
J(t,0;u_1,0)=J(0,0;u_1\oplus_t 0,0)=J(0;u_1\oplus_t 0,0).
\ee
From \ref{ass:H3}, we have
\bel{J(ut1)}
J(0;u_1\oplus_t 0,0)\ges \l\dbE\int_0^T|[u_1\oplus_t 0](s)|^2ds=\l\dbE\int_t^T|u_1(s)|^2ds.
\ee
Combining \rf{J(ut)} and \rf{J(ut1)} together, we get \rf{lem:convex-main} immediately.
\end{proof}

For any $(t,x)\in[0,T)\times\dbR^n$,
by \autoref{lem:convex} and \autoref{prop:follower} (with the initial time $0$ replaced by $t$), we have
\begin{align}
&J(t,x;\bar \a_1( u_2,t,x),u_2)\les J(t,x;u_1,u_2),\q \forall u_1\in\cU_1[t,T],\, u_2\in\cU_2[t,T],\label{J(t)-inquality}
\end{align}
where $\bar \a_1( u_2,t,x)\equiv\bar \a_1(\cd; u_2,t,x)$ is defined by \rf{closed-loop-repre-X} with
the initial time of \rf{state-u2-star} replaced by $t$.
Moreover, similar to \rf{utility-LLQ}, we have
\begin{align}\label{utility-LLQ-t}
&J(t,x;\bar \a_1( u_2,t,x),u_2)
= \dbE\Big\{\int_t^T\big[\lan \cR u_2, u_2\ran+\lan \cQ Y,Y\ran+\lan \cN Z,Z\ran\nn\\
&\q+2\lan \cS_1 Y, u_2\ran+2\lan \cS_2 Z,u_2\ran+2\lan \cS_3 Y,Z\ran\big] ds+2\lan Y(t),x\ran\Big\}
+\lan P_1(t)x,x\ran,
\end{align}
where $P_1$ is the unique solution to Riccati equation \rf{Ric-1}, $(Y,Z)$ is uniquely determined by
\bel{state-LLQ-t}\left\{\begin{aligned}
         dY(s) &=\big[\cA(s) Y(s) +\cB(s)u_2(s)+\cC(s)Z(s) \big]ds+Z(s) dW(s), \q s\in[t,T],\\
          Y(T) &=0,
\end{aligned}\right.\ee
and the coefficients are defined by \rf{def-barA}--\rf{def-barR}.
The optimal control problem with state equation \rf{state-LLQ-t} and utility \rf{utility-LLQ-t}
is a backward LQ problem over the time horizon $[t,T]$.
Next, we show that the following condition is sufficient for the uniform concavity of
the mapping $u_2\mapsto J(t,0;\bar \a_1( u_2,t,0),u_2)$.

\begin{taggedassumption}{(H5)}\label{ass:H5}
There exists a constant $\l>0$ such that
$$
J(0;0,u_2)\les- \l\dbE\int_0^T|u_2(s)|^2ds,\q \forall u_2\in\cU_2,
$$
where $J$ is defined by \rf{cost}.
\end{taggedassumption}

\begin{proposition}\label{Prop:saddle-point-control}
Let {\rm\ref{ass:H1}}--{\rm\ref{ass:H3}} and {\rm\ref{ass:H5}} hold.
Then
\bel{uniform-concave-U}
J(t,0;\bar \a_1( u_2,t,0),u_2)\les- \l\dbE\int_t^T|u_2(s)|^2ds,\q \forall u_2\in\cU_2[t,T],\, t\in[0,T),
\ee
where $J(t,0;\bar \a_1( u_2,t,0),u_2)$ is defined by \rf{utility-LLQ-t}.
In particular, assumption {\rm\ref{ass:H5}} implies that {\rm\ref{ass:H4}} holds.
\end{proposition}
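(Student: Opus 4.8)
The plan is to combine three facts that are already in hand. The first is that, on the subinterval $[t,T]$, the follower's best response $\bar\a_1(\cd\,;u_2,t,0)$ attains the infimum of $u_1\mapsto J(t,0;u_1,u_2)$ over $\cU_1[t,T]$; this is \rf{J(t)-inquality}, which itself comes from \autoref{prop:follower} (applied with initial time $t$) together with the uniform convexity furnished by \autoref{lem:convex}. The second is \ref{ass:H5}, which bounds $J(0;0,u_2)$ on the whole interval $[0,T]$. The third is the ``zero-extension in time'' device already used to prove \autoref{lem:convex}. Since, by the derivation of \rf{utility-LLQ-t}, its right-hand side is exactly $J(t,0;\bar\a_1(u_2,t,0),u_2)$, it suffices to estimate the latter quantity.

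Concretely, fix $t\in[0,T)$ and $u_2\in\cU_2[t,T]$. Because $\bar\a_1(\cd\,;u_2,t,0)$ minimizes $u_1\mapsto J(t,0;u_1,u_2)$ over $\cU_1[t,T]$, choosing $u_1\equiv0$ already gives
$$
J(t,0;\bar\a_1(u_2,t,0),u_2)\les J(t,0;0,u_2).
$$
Let $u_2\oplus_t0\in\cU_2$ be the extension of $u_2$ by $0$ on $[0,t)$. If we run \rf{state} from $X(0)=0$ with the control pair $(0,\,u_2\oplus_t0)$, then by linearity and homogeneity $X\equiv0$ on $[0,t]$; hence $X(t)=0$, the running cost in \rf{cost} vanishes on $[0,t]$, and on $[t,T]$ the state dynamics and the cost integrand agree with those of \rf{statet}--\rf{costt} started at $(t,0)$. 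Consequently
$$
J(t,0;0,u_2)=J(0;0,u_2\oplus_t0).
$$
Applying \ref{ass:H5} to $u_2\oplus_t0$ then yields
$$
J(0;0,u_2\oplus_t0)\les-\l\,\dbE\int_0^T|[u_2\oplus_t0](s)|^2ds=-\l\,\dbE\int_t^T|u_2(s)|^2ds,
$$
and chaining the three displays gives \rf{uniform-concave-U}. Finally, specializing to $t=0$, using $J(0;\cd)=J(0,0;\cd)$ and the fact that $\bar\a_1(\cd\,;u_2,0,0)$ coincides with the follower's response $\bar\a_1(\cd\,;u_2,0)$ appearing in \ref{ass:H4}, inequality \rf{uniform-concave-U} at $t=0$ is precisely the assertion \ref{ass:H4}.

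I do not expect a genuine obstacle here; the argument is really careful bookkeeping. The only two points deserving a line of justification are the identity $J(t,0;0,u_2)=J(0;0,u_2\oplus_t0)$ --- i.e. that the homogeneous state starting from $0$ under null controls stays identically $0$ on $[0,t]$ --- and the consistency of the two notations $\bar\a_1(\cd\,;u_2,t,x)$ and $\bar\a_1(\cd\,;u_2,x)$ when $t=0$; both are immediate once the definitions are unwound. The real content of the proposition is the observation that \ref{ass:H5}, which does not mention the follower's strategy at all, propagates through the follower's optimal response via this time-extension trick.
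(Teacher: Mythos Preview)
Your proof is correct and follows essentially the same approach as the paper: first use the optimality of $\bar\a_1$ at $u_1=0$ to bound $J(t,0;\bar\a_1(u_2,t,0),u_2)$ by $J(t,0;0,u_2)$, then apply the zero-extension trick (exactly as in \autoref{lem:convex}) together with \ref{ass:H5} to bound the latter. The paper is terser---it simply says ``by similar arguments to those employed in \autoref{lem:convex}'' for the second step---but the content is identical.
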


\begin{proof}
Recall from \rf{J(t)-inquality} that for any $(t,x)\in[0,T)\times\dbR^n$ and $u_2\in\cU_2[t,T]$,
we have
\bel{}
J(t,x;\bar \a_1(u_2,t,x), u_2)\les J(t,x;u_1, u_2),\q \forall u_1\in\cU_1[t,T].
\ee
In particular, taking $x=0$ and $u_1=0$, the above implies
\bel{uniform-concave-U1}
J(t,0;\bar \a_1(u_2,t,0), u_2)\les J(t,0;0, u_2),\q \forall u_2\in\cU_2[t,T].
\ee
Moreover, by the similar arguments to those employed in \autoref{lem:convex},  we get
\bel{uniform-concave-U2}
J(t,0;0,u_2)\les- \l\dbE\int_t^T|u_2(s)|^2ds,\q \forall u_2\in\cU_2[t,T].
\ee
Combining \rf{uniform-concave-U1} and \rf{uniform-concave-U2} together,
we obtain \rf{uniform-concave-U} immediately.
\end{proof}

The following examples are devoted to comparing the assumptions \ref{ass:H4} and \ref{ass:H5}.

\begin{example}
For any $x\in\dbR$, consider the  one-dimensional state equation
\bel{state-example1}\left\{\begin{aligned}
   \dot{X}(s) &=u_2(s),\q s\in[0,1],\\
     X(0)&= x,
\end{aligned}\right.\ee
and the quadratic functional
\bel{cost-example1}
J(x;u_1,u_2)=\int_0^1\big[|u_1(s)|^2-|u_2(s)|^2]ds.
\ee
It is directly checked that
$$
\bar \a_1(s;u_2,x)=0,\q s\in[0,1].
$$
Then
\bel{}
J(0; \bar \a_1(u_2,0),u_2)=J(0;0,u_2),\q \forall u_2\in\cU_2.
\ee
Thus, in the example, the assumptions \ref{ass:H4} and \ref{ass:H5} are equivalent.
\end{example}

\begin{example}\label{example2}
For any initial pair $(t,x)\in[0,4)\times\dbR$, consider the  one-dimensional state equation
\bel{state-example2}\left\{\begin{aligned}
   \dot{X}(s) &=u_1(s)-u_2(s),\q s\in[t,4],\\
     X(t)&= x,
\end{aligned}\right.\ee
and the quadratic functional
\bel{cost-example2}
J(t,x;u_1,u_2)=\int_t^4\big[|X(s)|^2+|u_1(s)|^2-2|u_2(s)|^2\big]ds.
\ee
It is direct to see that
$$
J(0,0;u_1,0)\ges \int_0^4|u_1(s)|^2ds,\q \forall u_1\in\cU_1[0,4],
$$
which implies that  \ref{ass:H3} holds.
By \autoref{prop:follower} and \autoref{lem:convex},
we know that for any initial pair $(t,x)\in[0,4)\times\dbR$ and $u_2\in\cU_2[t,4]$,
the follower (Player 1) admits a unique optimal control $\bar u_1\equiv\bar\a_1(u_2,t,x)$.
Note that
$$
J(0,0;0,\l)=\int_0^4|\l s|^2-2\l^2 ds={40\over 3}\l^2\to\i,\q \hbox{as}\q\l\to\i.
$$
Then the following condition does not hold:
\bel{con-example}
J(0,0;0,u_2)\les 0,\q \forall u_2\in\cU_2[0,4],
\ee
due to which the example does not satisfy assumption \ref{ass:H5}.
Even so, we still have
\bel{concave-example2}
J(t,0;\bar\a_1(u_2,t,0),u_2)\les J(t,0;u_2,u_2)=-\int_t^4|u_2(s)|^2ds,\q \forall u_2\in\cU_2[t,4],\,t\in[0,4),
\ee
which implies that \ref{ass:H4} still holds. It then follows from \autoref{Thm:SOP-control} that
the game  admits a Stackelberg equilibrium at any initial pair $(t,x)\in[0,4)\times\dbR$.
We point out that condition \rf{con-example} is necessary for the existence of an open-loop saddle point (see  \cite[Theorem 3.3]{Sun2020}).
Since the criterion functional \rf{cost-example2} does not satisfy \rf{con-example},
the game does not have an open-loop saddle point.

\end{example}

\begin{remark}
The combination of \ref{ass:H3} and \ref{ass:H5} is referred to as
a {\it uniform convexity-concavity condition} ((UCC) condition, for short) by Sun \cite{Sun2020}.
In \autoref{example2}, it has been shown that assumption \ref{ass:H4} is strictly weaker than \ref{ass:H5},
due to which we would like to call the  assumptions \ref{ass:H3}--\ref{ass:H4}
a {\it weak uniform convexity-concavity condition}.
\end{remark}

\subsection{Further results of backward  stochastic LQ optimal control problems}\label{sub:BLQ-t}
For any $(t,x)\in[0,T)\times\dbR^n$, we begin with this subsection by introducing the
following backward stochastic LQ optimal control problem over $[t,T]$
(denoted by Problem (BLQ$_{[t,T]}$)): Consider the state equation
\bel{state-LLQ-Ut}\left\{\begin{aligned}
         dY(s) &=\big[\cA(s) Y(s) +\cB(s)u_2(s)+\cC(s)Z(s) \big]ds+Z(s) dW(s), \q s\in[t,T],\\
          Y(T) &=\xi,
\end{aligned}\right.\ee
and the utility functional
\begin{align}\label{utility-LLQ-Ut}
&U(t,\xi;u_2)
= \dbE\Big\{\int_t^T\big[\lan \cR u_2, u_2\ran+\lan \cQ Y,Y\ran+\lan \cN Z,Z\ran
+2\lan \cS_1 Y, u_2\ran \nn\\
&\qq\qq\qq +2\lan \cS_2 Z,u_2\ran+2\lan \cS_3 Y,Z\ran\big] ds+2\lan Y(t),x\ran\Big\},
\end{align}
where the coefficients are defined by \rf{def-barA}--\rf{def-barR}.
Denote  the utility functional $U(t,\xi;u_2)$ with  $x=0$ by $U_0(t,\xi;u_2)$.
The following results show that Problem (BLQ$_{[t,T]}$) can be solved by introducing a new Riccati equation.

\begin{proposition}\label{pro:Riccati-BLQ}
Let {\rm\ref{ass:H1}--\ref{ass:H3}} and {\rm\ref{ass:H5}} hold.
Then the following Riccati equation admits a unique
negative semidefinite solution $\Si\in C(0,T;\dbS^n_-)$:
\bel{BLQ-Riccati-Equation1}
\left\{\begin{aligned}
&\dot{\Si}-\Si \cA^\top-\cA\Si+\Si\cS_1^\top\cR^{-1}\cB^\top+\cB\cR^{-1}\cS_1\Si
+\Si\cS^\top_1\cR^{-1}\cS_1\Si-\Si \cQ\Si+\cB\cR^{-1}\cB^\top\\
&\q +[\cC-\cB\cR^{-1}\cS_2-\Si\cS_1^\top\cR^{-1}\cS_2 +\Si\cS_3^\top]
[I+\Si\cN-\Si\cS_2^\top\cR^{-1}\cS_2]^{-1}\\
&\q \times\Si[\cC^\top-\cS_2^\top\cR^{-1}\cB^\top-\cS_2^\top\cR^{-1}\cS_1\Si +\cS_3\Si]=0,
\qq t\in[0,T],\\
& \Si(T)=0.
\end{aligned}\right.
\ee
Moreover, the function $\h\Si$, defined by
\bel{hat-Si1}
\h\Si\equiv I+\Si\cN-\Si\cS_2^\top\cR^{-1}\cS_2,
\ee
is invertible with $\h\Si^{-1}\in L^\i(0,T;\dbR^{n\times n})$ and $\h\Si^{-1}\Si\in L^\i(0,T;\dbS^{n})$.
\end{proposition}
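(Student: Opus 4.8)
The plan is to obtain \autoref{pro:Riccati-BLQ} by a change-of-variables that reduces Problem (BLQ$_{[0,T]}$) to the form already handled by \autoref{Prop:backward} and the results of \cite{Sun-Wu-Xiong2021}. Recall that in \autoref{Prop:backward} the Riccati equation \rf{BLQ-Riccati-Equation} carries the auxiliary function $H$ solving \rf{H-equation}; the point of the present proposition is that once we use the \emph{explicit} $H$ coming from the Stackelberg structure, the combination $H+\Si^H$ satisfies the $H$-free equation \rf{BLQ-Riccati-Equation1}. So the first step is to verify, using \autoref{prop:leader} and \autoref{Prop:saddle-point-control} (which guarantees \ref{ass:H4} and hence \ref{ass:B3} for the data \rf{def-barA}--\rf{def-barR}), that \autoref{Prop:backward} applies: the Riccati equation \rf{BLQ-Riccati-Equation} with coefficients \rf{def-barA}--\rf{def-barR} has a unique solution $\Si^H\in C(0,T;\dbS^n_-)$ and $\h\Si^H=I+\Si^H\cN^H$ is boundedly invertible.

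Next I would set $\Si\deq\Si^H+H$ where, for the Stackelberg data, \rf{H-equation} reads $\dot H+H\cA+\cA^\top H+\cQ=0$ with $H(0)=0$ (since $\cG=0$ in \rf{utility-LLQ}), but it is cleaner to keep $H$ abstract and substitute $\Si^H=\Si-H$ into \rf{BLQ-Riccati-Equation}. Using $\dot\Si^H=\dot\Si-\dot H=\dot\Si+H\cA+\cA^\top H+\cQ$ and expanding the definitions \rf{def-NH} of $\cN^H,\cS_1^H,\cS_3^H,\cL$, one checks term by term that all occurrences of $H$ cancel: the $H\cA+\cA^\top H+\cQ$ contributed by $\dot H$ is absorbed by the $\cB\cR^{-1}\cB^\top$, $\Si\cS_1^\top\cR^{-1}\cB^\top$-type cross terms and the $-\Si\cQ\Si$ term produced when the $\cB^\top+\cS_1^H\Si^H$ and $\cL^\top+\cS_3^H\Si^H$ factors are re-expressed in terms of $\Si$ rather than $\Si^H$. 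In particular one should find $I+\Si^H\cN^H=I+(\Si-H)(\cN-\cS_2^\top\cR^{-1}\cS_2+H)$; after using the $\dot H$-identity this equals $\h\Si=I+\Si\cN-\Si\cS_2^\top\cR^{-1}\cS_2$ \emph{modulo a quadrature}, which is the algebraic heart of the computation and the step I expect to be the main obstacle — it is a lengthy but in principle mechanical matching of the quadratic forms in \rf{BLQ-Riccati-Equation} against those in \rf{BLQ-Riccati-Equation1}. Granting this, existence and uniqueness of $\Si\in C(0,T;\dbS^n_-)$ follow: $\Si^H\in C(0,T;\dbS^n_-)$ and $H\in C([0,T];\dbS^n)$ give $\Si\in C(0,T;\dbS^n)$ solving \rf{BLQ-Riccati-Equation1}, and uniqueness is inherited from that of $\Si^H$ (any solution of \rf{BLQ-Riccati-Equation1} produces, by subtracting $H$, a solution of \rf{BLQ-Riccati-Equation}). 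The negative semidefiniteness of $\Si$ is then \emph{not} immediate from $\Si=\Si^H+H$, since $H$ need not be negative; instead I would argue it directly from the optimal control interpretation, exactly as $\Si^H\les0$ is argued in \cite[Theorem 6.2]{Sun-Wu-Xiong2021}: $\Si(t)$ represents (up to sign) the value of Problem (BLQ$_{[t,T]}$) with $x=0$, and \ref{ass:H5} (via \autoref{Prop:saddle-point-control}, inequality \rf{uniform-concave-U}) forces $U_0(t,\xi;u_2)\les0$, whence $\Si(t)\les0$ for a.e. $t$, and continuity upgrades this to all $t$.

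Finally, for the invertibility of $\h\Si=I+\Si\cN-\Si\cS_2^\top\cR^{-1}\cS_2$: I would rewrite $\h\Si$ in terms of $\h\Si^H$ using $\Si=\Si^H+H$ and the cancellation identity established above, obtaining a relation of the shape $\h\Si=\h\Si^H\cdot(\text{something})$ or $\h\Si=(\text{something})\cdot\h\Si^H$ with the extra factor explicitly bounded and boundedly invertible (here the $\dot H$-quadrature reappears); since $(\h\Si^H)^{-1}\in L^\i(0,T;\dbR^{n\times n})$ by \autoref{Prop:backward}, this yields $\h\Si^{-1}\in L^\i(0,T;\dbR^{n\times n})$. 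For $\h\Si^{-1}\Si\in L^\i(0,T;\dbS^n)$ one notes $\h\Si^{-1}\Si=\Si\h\Si^{-1}$ is symmetric because $\Si$ and $\cN-\cS_2^\top\cR^{-1}\cS_2$ are symmetric and $\Si$ commutes with $I+\Si(\cN-\cS_2^\top\cR^{-1}\cS_2)$; alternatively, $\h\Si^{-1}\Si=(\h\Si^H)^{-1}\Si^H$ times a correction as above, and $(\h\Si^H)^{-1}\Si^H$ is already known to be in $L^\i$. The essential risk is purely computational: the coefficient bookkeeping in matching \rf{BLQ-Riccati-Equation} to \rf{BLQ-Riccati-Equation1} is intricate, so I would carry it out with the substitutions $\cN^H\mapsto\cN-\cS_2^\top\cR^{-1}\cS_2+H$, $\cS_1^H\mapsto\cS_1+\cB^\top H$, $\cS_3^H\mapsto\cS_3-\cS_2^\top\cR^{-1}\cS_1+\cL^\top H$ made simultaneously and collect the $H$-linear and $H$-quadratic terms first, since those must vanish identically by design.
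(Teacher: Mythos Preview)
Your approach has a genuine gap: the proposed relation $\Si=\Si^H+H$ cannot hold. Both Riccati equations \rf{BLQ-Riccati-Equation} and \rf{BLQ-Riccati-Equation1} impose the terminal condition $\Si^H(T)=\Si(T)=0$, whereas $H$ solves the \emph{initial} value problem \rf{H-equation} with $H(0)=-\cG=0$ and inhomogeneity $\cQ=-B_1\h R_1^{-1}B_1^\top$. Unless $B_1\equiv0$ one has $H(T)\ne0$, so $\Si^H(T)+H(T)\ne0$. The algebraic cancellation you anticipate (``all occurrences of $H$ cancel'') therefore cannot happen; in particular your computation $I+\Si^H\cN^H=I+(\Si-H)(\cN-\cS_2^\top\cR^{-1}\cS_2+H)$ leaves residual terms $\Si H-H(\cN-\cS_2^\top\cR^{-1}\cS_2)-H^2$ that the linear ODE for $H$ does not eliminate. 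The two Riccati solutions $\Si^H$ and $\Si$ encode the value of the same problem in genuinely different parameterizations (one splits off $-\dbE\lan H(T)\xi,\xi\ran$, the other does not), and there is no purely additive relation between them.

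The paper's route is different: rather than removing $\cQ$ via $H$, it removes the cross term $\cS_2$ by the \emph{control} substitution $v_2=u_2+\cR^{-1}\cS_2 Z$, which turns \rf{state-LLQ-Ut}--\rf{utility-LLQ-Ut} into a backward LQ problem with coefficients $\ti\cC=\cC-\cB\cR^{-1}\cS_2$, $\ti\cN=\cN-\cS_2^\top\cR^{-1}\cS_2$, $\ti\cS_3=\cS_3-\cS_2^\top\cR^{-1}\cS_1$ and no $Z$--control cross term. The uniform concavity \rf{uniform-concave-U} (this is where \ref{ass:H5} via \autoref{Prop:saddle-point-control} is essential, and why concavity is needed for \emph{every} initial time $t$) transfers to the $v_2$-problem, and then the approximation scheme of \cite[Theorems~5.1, 6.2]{Sun-Wu-Xiong2021} applies directly: one solves forward Riccati equations $P_k$ with terminal data $-kI$ and obtains $\Si=\lim_{k\to\i}P_k^{-1}\in C(0,T;\dbS^n_-)$ satisfying the $\ti{}$-version of the Riccati equation, which is exactly \rf{BLQ-Riccati-Equation1}. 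The invertibility of $\h\Si=I+\Si\ti\cN$ with $\h\Si^{-1}\in L^\i$ and $\h\Si^{-1}\Si\in L^\i(0,T;\dbS^n)$ then comes for free from that construction.
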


\begin{proof}
Comparing \rf{utility-LLQ-Ut} with \rf{utility-LLQ-t} yields
\bel{}
U_0(t,0;u_2)=J(t,0;\bar \a_1( u_2,t,0),u_2),\q\forall u_2\in\cU_2[t,T],
\ee
where $J(t,x;u_1,u_2)$ is defined by \rf{costt}.
Under \ref{ass:H5}, by \autoref{Prop:saddle-point-control} we have
\bel{U0-convex}
U_0(t,0;u_2)=J(t,0;\bar \a_1( u_2,t,0),u_2)\les -\l\dbE\int_t^T|u_2(s)|^2ds,\q\forall u_2\in\cU_2[t,T],\,t\in[0,T).
\ee
Denote
\bel{def-cL}
\ti\cC= \cC-\cB\cR^{-1}\cS_2, \q \ti\cS_3= \cS_3-\cS_2^\top\cR^{-1}\cS_1,\q
\ti\cN=\cN-\cS_2^\top\cR^{-1}\cS_2,\q v_2=u_2+\cR^{-1}\cS_2Z.
\ee
Then state equation \rf{state-LLQ-Ut} and utility functional \rf{utility-LLQ-Ut} can be rewritten as:
\bel{state-LLQ-Ut1}\left\{\begin{aligned}
         dY(s) &=\big[\cA(s) Y(s) +\cB(s)v_2(s)+\ti\cC(s)Z(s) \big]ds+Z(s) dW(s), \q s\in[t,T],\\
          Y(T) &=\xi,
\end{aligned}\right.\ee
and
\begin{align}\label{utility-LLQ-Ut1}
&\ti U(t,\xi;v_2)\deq\dbE\Big\{\int_t^T\big[\lan \cR v_2, v_2\ran+\lan \cQ Y,Y\ran+\lan\ti\cN Z,Z\ran
\nn\\
&\qq +2\lan \cS_1 Y, v_2\ran +2\lan\ti\cS_3 Y,Z\ran\big] ds+2\lan Y(t),x\ran\Big\}= U(t,\xi;u_2).
\end{align}
Similarly, we denote  the utility functional $\ti U(t,\xi;v_2)$ with  $x=0$ by $\ti U_0(t,\xi;v_2)$.
By the standard results of BSDEs, we get
\bel{v2}
\dbE\int_t^T|v_2(s)|^2\les K\dbE\int_t^T\big[|u_2(s)|^2+|Z(s)|^2\big]ds\les K\dbE\[\int_t^T|u_2(s)|^2ds+|\xi|^2\].
\ee
Here, $K>0$  stands for a generic constant which could be different from line to line and is  independent of $t$.
Then by \rf{U0-convex} and \rf{v2} (with $\xi=0$), we get
\begin{align}
\ti U_0(t,0;v_2)&= U_0(t,0;u_2)\les -\l\dbE\int_t^T|u_2(s)|^2ds\nn\\
&\les -{\l\over K}\dbE\int_t^T|v_2(s)|^2ds,\q\forall v_2\in\cU_2[t,T],\,t\in[0,T).
\end{align}
Thus by \cite[Theorem 5.1 and Corollary 5.3]{Sun-Wu-Xiong2021}, there exists a constant $ k_0>0$ such that for any $k>k_0$
the following Riccati equation
\bel{}
\left\{\begin{aligned}
&\dot{P}_k+P_k\cA+\cA^\top P_k-\begin{pmatrix} \ti\cC^\top P_k+\ti\cS_3\\
                                      \cB^\top P_k+\cS_1\end{pmatrix}^\top
                                      \begin{pmatrix} \ti\cN+P_k,&0\\
                                      0& \cR\end{pmatrix}^\top
                                      \begin{pmatrix} \ti\cC^\top P_k+\ti\cS_3\\
                                      \cB^\top P_k+\cS_1\end{pmatrix}=0,\q t\in[0,T],\\
&P_k(T)=-kI,
\end{aligned}\right.
\ee
admits a unique solution $P_k\in C([0,T];\dbS_-^n)$.
Then applying the arguments employed in the proof of \cite[Theorem 6.2]{Sun-Wu-Xiong2021},
we get that $\Si\equiv\lim_{k\to\i} P_k^{-1}$ is the unique solution to the following Riccati equation
\bel{}
\left\{\begin{aligned}
&\dot{\Si}-\Si \cA^\top-\cA\Si+\Si\cS_1^\top\cR^{-1}\cB^\top+\cB\cR^{-1}\cS_1\Si
+\Si\cS^\top_1\cR^{-1}\cS_1\Si-\Si \cQ\Si+\cB\cR^{-1}\cB^\top\\
&\q +[\ti\cC +\Si\ti\cS_3^\top]
[I+\Si\ti\cN]^{-1}\Si[\ti\cC^\top +\ti\cS_3\Si]=0,
\qq t\in[0,T],\\
& \Si(T)=0.
\end{aligned}\right.
\ee
Moreover,
$I+\Si\ti\cN$
is invertible with $[I+\Si\ti\cN]^{-1}\in L^\i(0,T;\dbR^{n\times n})$ and
$[I+\Si\ti\cN]^{-1}\Si\in L^\i(0,T;\dbS^{n})$.
Then by the definition \rf{def-cL} of $\ti\cC$, $\ti\cS_3$ and $\ti\cN$,
we get the desired results immediately.
\end{proof}

Compared with \rf{BLQ-Riccati-Equation}, Ricaati equation \rf{BLQ-Riccati-Equation1}
does not depend on the auxiliary function $H$.
This new feature will play a crucial role in the proof of \autoref{Thm:Riccati}.
A challenging problem is to establish the well-posedness of Riccati equation  \rf{BLQ-Riccati-Equation1}
under an assumption like \ref{ass:H4}. We hope to come back in our future publications.
With the unique solution $\Si$ of \rf{BLQ-Riccati-Equation}, we introduce the following BSDE:
\bel{BLQ-BSDE1}
\left\{\begin{aligned}
d\f(s)&=\big\{(\cA+\Si\cQ-\Si\cS_1^\top\cR^{-1}\cS_1-\cB\cR^{-1}\cS_1)\f+[\cB\cR^{-1}\cS_2+\Si\cS_1^\top\cR^{-1}\cS_2\\
&\q~ -\cC-\Si\cS_3^\top]\h\Si^{-1}[(\Si \cS_3-\Si\cS_2^\top \cR^{-1}\cS_1)\f-\b]\big\}ds+\b dW(s),\q s\in[0,T], \\
 \f(T)&=-\xi,
\end{aligned}\right.
\ee
and SDE:
\bel{BLQ-fSDE1}
\left\{
\begin{aligned}
d X(s)&=\big\{-\big[\cA^\top+\cQ\Si-\cS_1^\top\cR^{-1}\cB^\top-\cS_1^\top\cR^{-1}\cS_1\Si
-(\cS_3^\top-\cS_1^\top\cR^{-1}\cS_2)\\
&\qq\times\h\Si^{-1}(\Si\cC^\top-\Si\cS_2^\top\cR^{-1}\cB^\top-\Si\cS_2^\top\cR^{-1}\cS_1\Si+\Si\cS_3\Si)\big]X\\
&\qq+(\cS_3^\top-\cS_1^\top\cR^{-1}\cS_2)\h\Si^{-1}(\Si\cS_3\f-\Si\cS_2^\top\cR^{-1}\cS_1\f-\b)
-\cQ\f+\cS_1^\top\cR^{-1}\cS_1\f\big\}ds\\
&\q+\big\{-\big[\cC^\top-\cS_2^\top\cR^{-1}\cB^\top
+(\cS_3-\cS_2^\top\cR^{-1}\cS_1)\Si-(\cN-\cS_2^\top\cR^{-1}\cS_2)\\
&\qq~\times\h\Si^{-1}(\Si\cC^\top-\Si\cS_2^\top\cR^{-1}\cB^\top-\Si\cS_2^\top\cR^{-1}\cS_1\Si+\Si\cS_3\Si)\big]X\\
&\qq~+(\cN-\cS_2^\top\cR^{-1}\cS_2)\h\Si^{-1}[(\Si\cS_3-\Si\cS_2^\top\cR^{-1}\cS_1)\f-\b]\\
&\qq~-(\cS_3-\cS_2^\top \cR^{-1}\cS_1)\f\big\}dW(s),\q s\in[0,T], \\
X(0)&=x.
\end{aligned}\right.
\ee

Then by the standard argument employed in backward stochastic LQ  optimal control problems
(see \cite{Sun-Wu-Xiong2021}, for example),
we can obtain the unique optimal control of Problem (BLQ$_{[0,T]}$).
The uniqueness of optimal controls of Problem (BLQ$_{[0,T]}$) comes from the uniform
concavity of the utility functional (see \autoref{Prop:saddle-point-control}).

\begin{proposition}\label{Prop:backward1}
Let {\rm\ref{ass:H1}--\ref{ass:H3}} and {\rm\ref{ass:H5}} hold.
Then for any $\xi\in L^2_{\cF_T}(\Om;\dbR^n)$,
the unique optimal control of {\rm Problem (BLQ)$_{[0,T]}$} is given by
\begin{align}
\bar u_2&=\cR^{-1}[\cB^\top+\cS_1\Si-\cS_2\h\Si^{-1}(\Si\cC^\top-\Si\cS_2^\top\cR^{-1}\cB^\top
-\Si\cS_2^\top\cR^{-1}\cS_1\Si+\Si\cS_3\Si)]X\nn\\
&\q+\cR^{-1}\cS_1\f-\cR^{-1}\cS_2\h\Si^{-1}[(\Si\cS_3-\Si\cS_2^\top\cR^{-1}\cS_1)\f-\b],\label{bar-v1}
\end{align}
where $\Si\in C(0,T;\dbS^n_-)$, $(\f,\b)\in  L^2_{\dbF}(\Om;C([0,T];\dbR^n))\times L^2_{\dbF}(0,T;\dbR^n)$
and $X\in L^2_{\dbF}(\Om;C([0,T];\dbR^n))$ are the unique solutions of  Riccati equation \rf{BLQ-Riccati-Equation1},
BSDE \rf{BLQ-BSDE1} and SDE \rf{BLQ-fSDE1}, respectively.
Moreover, the value function $U$ of {\rm Problem (BLQ)$_{[0,T]}$} can be represented as
\begin{align}
&U(0,\xi;\bar u_2)= -\lan\Si(0) x,x\ran-2\lan x,\f(0)\ran+\dbE\int_0^T
\Big\{\big\lan [\cQ-(\cS_3^\top-\cS_1^\top \cR^{-1}\cS_2)\h\Si^{-1}\Si (\cS_3-\cS_2^\top \cR^{-1}\cS_1)\nn\\
&\q -\cS_1^\top\cR^{-1}\cS_1]\f,\,\f\big\ran+2\big\lan(\cS_3^\top-\cS_1^\top\cR^{-1}\cS_2 )\h\Si^{-1}\b,\,\a\big\ran
+\big\lan(\cN-\cS_2^\top\cR^{-1}\cS_2)\h\Si^{-1}\b,\,\b\big\ran \Big\}ds.\label{U-value1}
\end{align}
\end{proposition}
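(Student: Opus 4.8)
To prove the proposition, the plan is to follow the classical decoupling route for backward stochastic LQ problems (as in \cite{Sun-Wu-Xiong2021}), but starting from the cross-term-free reformulation already prepared in the proof of \autoref{pro:Riccati-BLQ}. Concretely I would: remove the control--$Z$ coupling via the substitution $v_2=u_2+\cR^{-1}\cS_2Z$; write the optimality system as a coupled forward-backward SDE; decouple it through the Riccati equation \rf{BLQ-Riccati-Equation1}; and finally verify optimality and read off the value function by completing the square.

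First, recall from \rf{def-cL}--\rf{utility-LLQ-Ut1} that setting $v_2=u_2+\cR^{-1}\cS_2Z$ turns Problem (BLQ)$_{[0,T]}$ into the problem with state \rf{state-LLQ-Ut1} and utility \rf{utility-LLQ-Ut1}, in which no term couples the control to $Z$, and that by \autoref{pro:Riccati-BLQ} (and its proof) the corresponding reduced Riccati equation possesses the unique solution $\Si\in C(0,T;\dbS^n_-)$ with $I+\Si\ti\cN$ invertible, $(I+\Si\ti\cN)^{-1}\in L^\i$ and $(I+\Si\ti\cN)^{-1}\Si\in L^\i$. It then suffices to solve this reduced problem and substitute $u_2=v_2-\cR^{-1}\cS_2Z$ back at the end, expressing $Z$ through the optimal triple. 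Note that once $\Si$ is fixed, every linear SDE and BSDE appearing below has bounded coefficients, so their well-posedness is standard.

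Next I would carry out the variational analysis for the reduced problem: differentiating the strictly concave utility \rf{utility-LLQ-Ut1} along $v_2+\e w$ and introducing the forward adjoint process $X$ associated with \rf{state-LLQ-Ut1}, whose initial value is dictated by the linear term $2\lan Y(t),x\ran$, one obtains a stationarity condition expressing the optimal $v_2$ linearly in $(X,Y)$, so that the first-order conditions form a coupled FBSDE in $(X;Y,Z)$. I would then posit the decoupling ansatz $Y=-\Si X-\f$ for an unknown pair $(\f,\b)\in L^2_{\dbF}(\Om;C([0,T];\dbR^n))\times L^2_{\dbF}(0,T;\dbR^n)$ --- compatible with the terminal data since $\Si(T)=0$ forces $\f(T)=-\xi$ --- apply It\^o's formula to both sides, and match the $ds$- and $dW$-coefficients. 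Using the invertibility of $\h\Si$ to solve the resulting algebraic relation for $Z$ in terms of $X,\f,\b$, the matching should force $\Si$ to satisfy the reduced form of \rf{BLQ-Riccati-Equation1}, the pair $(\f,\b)$ to satisfy \rf{BLQ-BSDE1}, and $X$ to satisfy \rf{BLQ-fSDE1}; substituting these back into the stationarity condition and undoing the $v_2\mapsto u_2$ substitution then yields the candidate control \rf{bar-v1}.

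Finally I would verify optimality by a completion-of-squares argument: applying It\^o's formula to $\lan X(s),Y(s)\ran$ and to the quadratic form built from $\Si$ and its Riccati equation, one should reach an identity of the form $U(0,\xi;u_2)=U(0,\xi;\bar u_2)+\dbE\int_0^T\lan\cR(v_2-\bar v_2),v_2-\bar v_2\ran ds$ written in terms of the reduced controls, and since $\cR\ll0$ this shows $\bar u_2$ attains the supremum; uniqueness is guaranteed by the uniform concavity of the utility from \autoref{Prop:saddle-point-control}. The boundary and forcing terms that do not cancel in this computation assemble precisely into the value-function formula \rf{U-value1}. The main obstacle is the decoupling step: the coefficient matching is lengthy, and the elimination of $Z$ through $\h\Si^{-1}$ must be organized so as to reproduce exactly the coefficients displayed in \rf{BLQ-Riccati-Equation1}, \rf{BLQ-BSDE1} and \rf{BLQ-fSDE1}; in parallel, the $\cS_2$-reduction has to be tracked carefully so that the final control and value function come out in terms of the original data.
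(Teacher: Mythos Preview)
Your proposal is correct and matches the paper's approach: the paper does not give a detailed proof but simply invokes ``the standard argument employed in backward stochastic LQ optimal control problems (see \cite{Sun-Wu-Xiong2021}, for example)'' together with the uniform concavity from \autoref{Prop:saddle-point-control} for uniqueness, and your outline --- the $v_2=u_2+\cR^{-1}\cS_2Z$ reduction already carried out in the proof of \autoref{pro:Riccati-BLQ}, the FBSDE optimality system, the decoupling ansatz $Y=-\Si X-\f$, and the completion-of-squares verification --- is precisely that standard argument specialized to the present data.
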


By \autoref{Prop:backward1}, we can rewrite \autoref{prop:leader} as follows.

\begin{proposition}\label{prop:leader1}
Let {\rm\ref{ass:H1}--\ref{ass:H3}} and {\rm\ref{ass:H5}} hold.
Then {\rm Problem (LLQ)} admits a unique optimal control:
\begin{align}
\bar u_2&=\cR^{-1}[\cB^\top+\cS_1\Si-\cS_2\h\Si^{-1}(\Si\cC^\top-\Si\cS_2^\top\cR^{-1}\cB^\top
-\Si\cS_2^\top\cR^{-1}\cS_1\Si+\Si\cS_3\Si)]\bar\cX,\label{bar-u21}
\end{align}
where $\Si$ is the unique solution to Riccati equation \rf{BLQ-Riccati-Equation1}
and $\bar\cX$ is uniquely determined by the following SDE:
\bel{LLQ-fSDE1}
\left\{
\begin{aligned}
d\bar\cX(s)&=-\big[\cA^\top+\cQ\Si-\cS_1^\top\cR^{-1}\cB^\top-\cS_1^\top\cR^{-1}\cS_1\Si
-(\cS_3^\top-\cS_1^\top\cR^{-1}\cS_2)\\
&\qq\times\h\Si^{-1}(\Si\cC^\top-\Si\cS_2^\top\cR^{-1}\cB^\top-\Si\cS_2^\top\cR^{-1}\cS_1\Si+\Si\cS_3\Si)\big]\bar\cX ds\\
&\q-\big[\cC^\top-\cS_2^\top\cR^{-1}\cB^\top
+(\cS_3-\cS_2^\top\cR^{-1}\cS_1)\Si-(\cN-\cS_2^\top\cR^{-1}\cS_2)\\
&\qq~\times\h\Si^{-1}(\Si\cC^\top-\Si\cS_2^\top\cR^{-1}\cB^\top-\Si\cS_2^\top\cR^{-1}\cS_1\Si
+\Si\cS_3\Si)\big]\bar\cX dW(s),\q s\in[0,T], \\
\bar\cX(0)&=x.
\end{aligned}\right.
\ee
\end{proposition}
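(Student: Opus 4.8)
The plan is to derive Proposition \ref{prop:leader1} from Proposition \ref{Prop:backward1} in exactly the same way that Proposition \ref{prop:leader} was derived from Proposition \ref{Prop:backward}, only now using the new representation that avoids the auxiliary function $H$. First I would recall from \eqref{cost-u2-U} and \eqref{utility-LLQ-Ut} (with $t=0$, $x$ the initial state) that
\[
U_0(0,0;u_2)=J(0;\bar\a_1(u_2,0),u_2),\qquad\forall u_2\in\cU_2,
\]
so that under \ref{ass:H1}--\ref{ass:H3} and \ref{ass:H5}, Proposition \ref{Prop:saddle-point-control} gives the uniform concavity needed to invoke Proposition \ref{Prop:backward1}. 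Comparing \eqref{utility-LLQ} with the backward utility \eqref{utility-LLQ-Ut} for $t=0$, we see that Problem (LLQ) is precisely Problem (BLQ$_{[0,T]}$) with terminal state $\xi=0$ (and initial-state parameter $g=x$), up to the additive constant $\lan P_1(0)x,x\ran$, which does not affect the optimizer.

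Next I would specialize Proposition \ref{Prop:backward1} to $\xi=0$. The key observation, exactly as in the proof of Proposition \ref{prop:leader}, is that when the terminal condition of BSDE \eqref{BLQ-BSDE1} is $\f(T)=-\xi=0$ and there is no nonhomogeneous term, the unique adapted solution is identically zero: $(\f,\b)\equiv(0,0)$. Substituting $(\f,\b)\equiv(0,0)$ into SDE \eqref{BLQ-fSDE1} collapses all the $\f$- and $\b$-dependent terms, leaving exactly the linear SDE \eqref{LLQ-fSDE1} with $X(0)=x$; hence $X=\bar\cX$. Substituting $(\f,\b)\equiv(0,0)$ and $X=\bar\cX$ into the optimal-control formula \eqref{bar-v1} kills the second line entirely and reduces the first line to \eqref{bar-u21}. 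This yields the claimed optimal control of Problem (LLQ).

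The only genuine content to check is that the hypotheses of Proposition \ref{Prop:backward1} really do hold here, i.e.\ that the coefficients $\cA,\cB,\cC$ and the weighting data $\cR,\cQ,\cN,\cS_1,\cS_2,\cS_3$ defined in \eqref{def-barA}--\eqref{def-barR} satisfy the $L^\infty$-regularity required there and that the uniform concavity \eqref{uniform-concave-U} holds at $t=0$. The regularity follows from \ref{ass:H1}--\ref{ass:H2}, the strong regularity of $P_1$ (so $P_1\in C([0,T];\dbS^n)$ is bounded), and the uniform positivity $\h R_1=D_1^\top P_1D_1+R_1\gg0$ from \eqref{hat-R1}, which makes $\h R_1^{-1}$ bounded; the concavity is \autoref{Prop:saddle-point-control}. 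I expect the only mildly delicate point to be bookkeeping: confirming that the $\xi=0$, nonhomogeneous-free specialization of the rather long expressions \eqref{BLQ-fSDE1} and \eqref{bar-v1} matches \eqref{LLQ-fSDE1} and \eqref{bar-u21} term by term. This is purely mechanical, so the proof is short: cite \autoref{Prop:saddle-point-control} for concavity, invoke Proposition \ref{Prop:backward1} with $\xi=0$, note $(\f,\b)\equiv(0,0)$, and read off the result.
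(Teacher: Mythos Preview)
Your proposal is correct and follows essentially the same approach as the paper, which simply states ``By \autoref{Prop:backward1}, we can rewrite \autoref{prop:leader} as follows'' before stating \autoref{prop:leader1}. Your argument---verify the hypotheses of \autoref{Prop:backward1} via \autoref{Prop:saddle-point-control}, specialize to $\xi=0$ so that $(\f,\b)\equiv(0,0)$, and read off \eqref{bar-u21} and \eqref{LLQ-fSDE1} from \eqref{bar-v1} and \eqref{BLQ-fSDE1}---is exactly the intended derivation, just spelled out in more detail than the paper provides.
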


\subsection{Closed-loop representation for the Stackelberg equilibrium}\label{subsec:CR}
In this subsection, we shall show that the Stackelberg equilibrium $(\bar u_1,\bar u_2)$
obtained in \autoref{Thm:SOP-control} admits a closed-loop representation.

\begin{theorem}\label{Thm:SOP-control1}
Let {\rm\ref{ass:H1}--\ref{ass:H3}} and {\rm\ref{ass:H5}} hold.
Let $P_1\in C([0,T];\dbS^n)$ and $\Si\in C([0,T];\dbS_-^n)$ be the unique solutions to Riccati equations
\rf{Ric-1} and \rf{BLQ-Riccati-Equation1}, respectively.
Then {\rm Problem (SG)} has a Stackelberg equilibrium $(\h u_1,\h u_2)\in\cU_1\times\cU_2$,
which admits the following closed-loop representation:
\begin{align}
\h u_1=\h \Th_1\h X&\equiv \h R_1^{-1}\big\{B_1^\top\Si-B_1^\top P_1-D_1^\top P_1C
-D_1^\top\h\Si^{-1}
(\Si\cC^\top-\Si\cS_2^\top\cR^{-1}\cB^\top\nn\\
&\q-\Si\cS_2^\top\cR^{-1}\cS_1\Si+\Si\cS_3\Si)
-D_1^\top P_1 D_2\cR^{-1}[\cB^\top+\cS_1\Si-\cS_2\h\Si^{-1}(\Si\cC^\top\nn\\
&\q-\Si\cS_2^\top\cR^{-1}\cB^\top
-\Si\cS_2^\top\cR^{-1}\cS_1\Si+\Si\cS_3\Si)]\big\}\h X,\label{thm:closed-loop-system-SO1}\\
\h u_2=\h\Th_2\h X&\equiv \cR^{-1}\big[\cB^\top+\cS_1\Si-\cS_2\h\Si^{-1}(\Si\cC^\top-\Si\cS_2^\top\cR^{-1}\cB^\top
-\Si\cS_2^\top\cR^{-1}\cS_1\Si+\Si\cS_3\Si)\big]\h X,
\label{thm:closed-loop-system-SO2}
\end{align}
with $\h X$ being the unique solution of the closed-loop system:
\bel{state-closed-loop-SO}\left\{\begin{aligned}
   d\h X(s) &=\big\{A(s)\h X(s) +B_1(s)\h\Th_1 (s)\h X(s) +B_2(s)\h\Th_2(s)\h X(s) \big\}ds\\
         &~\hp{=}+\big\{C(s)\h X(s) +D_1(s)\h\Th_1(s)\h X(s)+ D_2(s)\h\Th_2(s)\h X(s)\big\}dW(s),\q s\in[0,T], \\
    \h X(0) &= x.
\end{aligned}\right.\ee
Moreover,
\begin{align}
J(x;\h u_1,\h u_2)=\lan(P_1(0)-\Si(0))x,x\ran, \q \forall x\in\dbR^n.\label{SOP-V}
\end{align}
\end{theorem}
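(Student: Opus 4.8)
The plan is to assemble \autoref{Thm:SOP-control1} from the two pieces already established: the follower's closed-loop rule from \autoref{prop:follower} and the leader's optimal control from \autoref{prop:leader1}. First I would invoke \autoref{Prop:saddle-point-control} to note that \ref{ass:H5} implies \ref{ass:H4}, so that \autoref{Thm:SOP-control} applies and guarantees the pair $(\bar\a_1(\bar u_2,x),\bar u_2)$ is a Stackelberg equilibrium; the only remaining work is to rewrite this equilibrium in feedback form. By \autoref{prop:leader1}, the leader's optimal control is $\bar u_2=\h\Th_2\bar\cX$ where $\bar\cX$ solves \rf{LLQ-fSDE1}. The key observation is that the dynamics of $\bar\cX$ in \rf{LLQ-fSDE1} are exactly those obtained by substituting $u_2=\h\Th_2\bar\cX$ into the leader's state equation \rf{state-LLQ} for $(Y,Z)$; that is, along the optimal trajectory $Y=\bar\cX$ (and the corresponding $Z$ is the $dW$-integrand of \rf{LLQ-fSDE1}). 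I would verify this by plugging \rf{bar-u21} into \rf{state-LLQ} and matching drift and diffusion coefficients against \rf{LLQ-fSDE1}, using the relation \rf{hat-Si1} defining $\h\Si$ and the definitions \rf{def-barA}--\rf{def-barR} of the coefficients.

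Next I would express the follower's control along the optimal path. From \autoref{prop:follower}, $\bar u_1=\Th\bar X+v$ with $v=-\h R_1^{-1}(B_1^\top Y+D_1^\top Z+D_1^\top P_1D_2u_2)$; the crucial point is that the $Y,Z$ appearing here are precisely the solution of \rf{eta-beta}, which after the change of notation is \rf{state-LLQ}, hence $Y=\bar\cX$ and $Z$ equals the diffusion term of \rf{LLQ-fSDE1}, evaluated at $u_2=\bar u_2=\h\Th_2\bar\cX$. Substituting these expressions for $Y$, $Z$, and $u_2$ into $v$ gives $v$ as a linear function of $\bar\cX$; combined with $\Th\bar X$ this yields $\bar u_1=\Th\bar X+(\text{linear in }\bar\cX)$. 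The real content of \rf{thm:closed-loop-system-SO1} is that this reduces to a pure feedback $\h\Th_1\h X$ on the \emph{physical} state $\h X$ of \rf{state-closed-loop-SO}; this requires identifying $\bar X$ (the follower's state \rf{state-u2-star}) with $\h X$, and then showing the extra $\bar\cX$-dependent term collapses into the stated formula. I would do this by checking that, under the feedback laws $\h\Th_1,\h\Th_2$, the closed-loop equation \rf{state-closed-loop-SO} has the same coefficients as \rf{state-u2-star} with $u_2=\bar u_2$ substituted, so $\h X=\bar X$; then it suffices to prove the algebraic identity that $\h\Th_1\h X$ equals $\Th\h X+v$ once $Y,Z,u_2$ are written in terms of $\bar\cX=\h X$.

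The main obstacle I anticipate is the bookkeeping in this last algebraic identification: one must carefully track how the diffusion term $Z$ of \rf{LLQ-fSDE1} enters $v$, and verify that the various occurrences of $\h R_1^{-1}$, $\cR^{-1}$, and $\h\Si^{-1}$ combine correctly through the definitions \rf{def-barA}--\rf{def-barR} to produce the long expression in \rf{thm:closed-loop-system-SO1}. This is purely computational but error-prone because of the number of terms; I would organize it by first writing $Z=\big(\cN^H\text{-type coefficient}\big)\bar\cX$ from \rf{LLQ-fSDE1}, then substituting into $v=-\h R_1^{-1}(B_1^\top\bar\cX+D_1^\top Z+D_1^\top P_1D_2\h\Th_2\bar\cX)$, and finally adding $\Th\bar\cX=-\h R_1^{-1}(B_1^\top P_1+D_1^\top P_1C)\bar\cX$ to get $\h\Th_1\bar\cX$. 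For the value formula \rf{SOP-V}, I would combine $J(x;\bar\a_1(\bar u_2,x),\bar u_2)=\bar U(0)+\lan P_1(0)x,x\ran$ from \rf{cost-u2-U} with the value formula \rf{U-value1} of \autoref{Prop:backward1}, specialized to $\xi=0$: then BSDE \rf{BLQ-BSDE1} has solution $(\f,\b)\equiv(0,0)$, the integral term vanishes, and \rf{U-value1} reduces to $-\lan\Si(0)x,x\ran$, giving $J(x;\h u_1,\h u_2)=\lan(P_1(0)-\Si(0))x,x\ran$. The last step to flag is that one should confirm the equilibrium value is attained at $(\h u_1,\h u_2)$ and not merely bounded by it, which follows from uniqueness of the optimal controls in Problems (FLQ) and (LLQ) under \ref{ass:H3} and \ref{ass:H5}.
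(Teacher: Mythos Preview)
Your overall architecture is right---invoke \autoref{Prop:saddle-point-control} to get \ref{ass:H4}, use \autoref{prop:leader1} for $\bar u_2=\h\Th_2\bar\cX$, and feed the resulting $(Y,Z,u_2)$ back into the follower's closed-loop rule---but there is a genuine error in the identification you make between the processes. You assert that ``along the optimal trajectory $Y=\bar\cX$'' and then write $v=-\h R_1^{-1}(B_1^\top\bar\cX+\cdots)$. This is not correct: $(Y,Z)$ solves the \emph{backward} equation \rf{state-LLQ} with $Y(T)=0$, whereas $\bar\cX$ is the \emph{forward} auxiliary process from \rf{LLQ-fSDE1} with $\bar\cX(0)=x$; they cannot coincide. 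The actual decoupling relation, which is what the Riccati equation $\Si$ encodes, is
\[
Y=-\Si\,\bar\cX,\qq
Z=\h\Si^{-1}\big(\Si\cC^\top-\Si\cS_2^\top\cR^{-1}\cB^\top-\Si\cS_2^\top\cR^{-1}\cS_1\Si+\Si\cS_3\Si\big)\bar\cX.
\]
You can see this is forced by the target formula \rf{thm:closed-loop-system-SO1}: the term $\h R_1^{-1}B_1^\top\Si$ there arises from $-\h R_1^{-1}B_1^\top Y$ with $Y=-\Si\bar\cX$, not from $Y=\bar\cX$. With your identification the $\Si$ would be missing and the formula would not match.

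Once you use the correct relation $Y=-\Si\bar\cX$, the rest of your plan is essentially the paper's: substitute $Y$, $Z$, and $u_2=\h\Th_2\bar\cX$ into $\bar u_1=\Th\bar X+v$, obtaining $\bar u_1$ as $-\h R_1^{-1}(B_1^\top P_1+D_1^\top P_1C)\bar X$ plus a linear function of $\bar\cX$; then show $\bar X=\bar\cX=\h X$. For the latter, the paper does \emph{not} compare \rf{state-closed-loop-SO} with \rf{state-u2-star} directly as you suggest. Instead it verifies, via the definitions \rf{def-barA}--\rf{def-barR}, that the drift and diffusion coefficients of \rf{LLQ-fSDE1} equal $A+B_1\h\Th_1+B_2\h\Th_2$ and $C+D_1\h\Th_1+D_2\h\Th_2$ respectively, so $\bar\cX=\h X$; then $\bar\cX$ automatically satisfies \rf{state-u2-star} (with $u_2=\bar u_2$), and uniqueness gives $\bar X=\bar\cX$. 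Your treatment of the value \rf{SOP-V} via $(\f,\b)\equiv(0,0)$ in \rf{U-value1} is correct.
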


\begin{proof}
Taking $u_2=\bar u_2$ in \rf{eta-beta}, then the unique solution $(Y,Z)$ of \rf{eta-beta} can be given by
\begin{align}
Y&=-\Si \bar\cX,\q
Z=\h\Si^{-1}(\Si\cC^\top-\Si\cS_2^\top\cR^{-1}\cB^\top
-\Si\cS_2^\top\cR^{-1}\cS_1\Si+\Si\cS_3\Si)\bar\cX,
\end{align}
where $\bar u_2$ and $\bar\cX$ are determined by \rf{bar-u21} and \rf{LLQ-fSDE1}, respectively.
Substituting the above into \rf{closed-loop-repre-X} yields that
\begin{align}
\bar u_1&=\bar\a_1(\bar u_2,x)= -\h R_1^{-1}(B_1^\top P_1+D_1^\top P_1C)\bar X
+\h R_1^{-1}B_1^\top\Si\bar\cX-\h R_1^{-1}D_1^\top\h\Si^{-1}
\big(\Si\cC^\top-\Si\cS_2^\top\cR^{-1}\cB^\top\nn\\
&\q-\Si\cS_2^\top\cR^{-1}\cS_1\Si+\Si\cS_3\Si\big)\bar\cX
-\h R_1^{-1}D_1^\top P_1 D_2\cR^{-1}\big[\cB^\top+\cS_1\Si-\cS_2\h\Si^{-1}(\Si\cC^\top\nn\\
&\q-\Si\cS_2^\top\cR^{-1}\cB^\top
-\Si\cS_2^\top\cR^{-1}\cS_1\Si+\Si\cS_3\Si)\big]\bar\cX,\label{bar-u1}
\end{align}
where $\bar X$ is the unique solution of \rf{state-u2-star} with $u_2=\bar u_2$; that is
\bel{state-u2-star11}\left\{\begin{aligned}
d\bar X(s)&=\big\{A\bar X +B_1\h R_1^{-1}(B_1^\top P_1+D_1^\top P_1C)[\bar\cX-\bar X]+B_1\h\Th_1 \bar\cX + B_2\h\Th_2 \bar\cX  \big\}ds\\
&\hp{ =\,} +\big\{C\bar X +D_1\h R_1^{-1}(B_1^\top P_1+D_1^\top P_1C)[\bar\cX-\bar X]+D_1\h\Th_1\bar\cX + D_2\h\Th_2\bar\cX  \big\}dW(s), \\
\bar X(0)& = x.
\end{aligned}\right.\ee
To prove that $(\bar u_1,\bar u_2)=(\h u_1,\h u_2)$,
by comparing \rf{bar-u1} and \rf{bar-u21} with \rf{thm:closed-loop-system-SO1} and \rf{thm:closed-loop-system-SO2},
it suffices to show that  $\bar X=\bar\cX=\h X$.
If equation \rf{LLQ-fSDE1} can be rewritten as \rf{state-closed-loop-SO},
then $\bar\cX=\h X$, which implies that $\bar\cX$ satisfies \rf{state-u2-star11}.
By the uniqueness of the solution to \rf{state-u2-star11}, we get $\bar X=\bar\cX$ immediately.

\ms
Now let us show that \rf{LLQ-fSDE1} can be really rewritten as \rf{state-closed-loop-SO}.
Indeed, by the definitions \rf{def-barA}--\rf{def-barR} of $\cA$, $\cQ$, $\cS_1$ and $\cS_3$, we get
\begin{align}
&\cS_1^\top\cR^{-1}\cB^\top+\cS_1^\top\cR^{-1}\cS_1\Si
-\cA^\top-\cQ\Si+(\cS_3^\top-\cS_1^\top\cR^{-1}\cS_2)\nn\\
&\qq\times\h\Si^{-1}(\Si\cC^\top-\Si\cS_2^\top\cR^{-1}\cB^\top-\Si\cS_2^\top\cR^{-1}\cS_1\Si+\Si\cS_3\Si)\nn\\
&\q=A-B_1\h R_1^{-1} (B^\top_1P_1+D_1^\top P_1C)+B_1\h R_1^{-1}B_1^\top\Si+[ B_2-B_1\h R_1^{-1}D_1^\top P_1 D_2]\nn\\
&\qq\times[\cR^{-1}\cB^\top+\cR^{-1}\cS_1\Si]-[B_1\h R_1^{-1}D_1^\top+ (B_2-B_1\h R_1^{-1}D_1^\top P_1 D_2)\cR^{-1}\cS_2]\nn\\
&\qq\times\h\Si^{-1}(\Si\cC^\top-\Si\cS_2^\top\cR^{-1}\cB^\top-\Si\cS_2^\top\cR^{-1}\cS_1\Si+\Si\cS_3\Si)\nn\\
&\q=A +B_1\h\Th_1  +B_2\h\Th_2,\label{thm-close-system-proof1}
\end{align}
where $\h\Th_1$ and $\h\Th_2$  are defined by \rf{thm:closed-loop-system-SO1}--\rf{thm:closed-loop-system-SO2}.
In a similar way, we  also have
\begin{align}
&\cS_2^\top\cR^{-1}\cB^\top-\cC^\top
-(\cS_3-\cS_2^\top\cR^{-1}\cS_1)\Si+(\cN-\cS_2^\top\cR^{-1}\cS_2)\nn\\
&\qq~\times\h\Si^{-1}(\Si\cC^\top-\Si\cS_2^\top\cR^{-1}\cB^\top-\Si\cS_2^\top\cR^{-1}\cS_1\Si+\Si\cS_3\Si)\nn\\
&\q=C +D_1\h\Th_1  +D_2\h\Th_2. \label{thm-close-system-proof2}
\end{align}
By \rf{thm-close-system-proof1} and \rf{thm-close-system-proof2},
we see that equation \rf{LLQ-fSDE1} can be rewritten as \rf{state-closed-loop-SO}.

\ms
When $\xi=0$ and $\si=0$, the unique solution of BSDE \rf{BLQ-BSDE1} is given by $(\f,\b)\equiv(0,0)$.
From the representation \rf{U-value1} of the value function $U$, we get
$$
U(0,0;\bar u_2)=-\lan\Si(0)x,x\ran.
$$
Substituting the above into \rf{cost-u2-U} yields (noting $U(0;u_2)=U(0,0;u_2)$)
$$
J(x;\h u_1,\h u_2)=J(x;\bar\a_1(\bar u_2,x),\bar u_2)=\lan(P_1(0)-\Si(0))x,x\ran.
$$
This completes the proof.
\end{proof}

\begin{remark}
It is noteworthy that the results obtained in Subsections \ref{sub:BLQ-t} and \ref{subsec:CR}
still hold true if the assumption \ref{ass:H5} is replaced by  \rf{uniform-concave-U},
because \rf{uniform-concave-U} is  sufficient for the well-posedness of Riccati equation \rf{BLQ-Riccati-Equation1}.
\end{remark}

\section{Connections between Problems (SG) and (NG)}\label{sec:Connection}
Recall from \cite{Sun2020} that the  (UCC) condition \ref{ass:H3} and \ref{ass:H5}
is  sufficient and almostly necessary for the solvability of Problem (NG).
In this section, under \ref{ass:H3} and \ref{ass:H5},
we shall establish some interesting connections between Problem (SG) and Problem (NG).

\subsection{Relatioship between the Riccati equations}\label{Subsec:Ri-E}
The Riccati equation associated with Problem (NG)  reads
\bel{Ri-E}\left\{\begin{aligned}
  & \dot P+PA+A^\top P+C^\top PC+Q \\
  & \hp{\dot P} -(PB+C^\top PD)(R+D^\top PD)^{-1}(B^\top P+D^\top PC)=0, \\
  & P(T)=G,
\end{aligned}\right.\ee
where
\begin{align}
&B=(B_1,B_2),\q  D=(D_1,D_2),
\q R= \begin{pmatrix} R_{1} & 0\\ 0 & R_{2}\end{pmatrix}.
%
%
\end{align}

\begin{definition}\label{def-solution-RiE}
An absolutely continuous function $P:[0,T]\to \dbS^n$ is called a  {\it  solution} of Riccati equation \rf{Ri-E} if
\begin{enumerate}[(i)]
\item  $P$ satisfies \rf{Ri-E}  almost everywhere on $[0,T]$, and

\item  $R+D^\top PD$ is invertible with $(R+D^\top PD)^{-1}\in L^\i(0,T;\dbS^n)$.
\end{enumerate}
\end{definition}

In \cite[Definition 4.2]{Sun2020}, the solution $P$ of \rf{Ri-E}  is called a {\it strongly regular solution}
if it also satisfies:
\bel{SRS}
(-1)^{i+1}(R_i+D_i^\top PD_i)\gg 0,\q i=1,2.
\ee
However, the uniformly positive definiteness \rf{SRS} does not imply the open-loop solvability of Problem (NG),
which is different from the situation in control problems (see \cite[Example 4.5]{Sun2020}).
Thus, the  condition \rf{SRS} is only used to ensure the invertibility of the singular term $R+D^\top PD$
(i.e., the property (ii) in \autoref{def-solution-RiE}).

\ms
The following result establishes a connection between Riccati equations \rf{Ri-E},
\rf{Ric-1} and \rf{BLQ-Riccati-Equation1}, which are introduced for solving Problems (NG),
(FLQ) and (LLQ), respectively.

\begin{theorem}\label{Thm:Riccati}
Let {\rm\ref{ass:H1}--\ref{ass:H3}} and {\rm\ref{ass:H5}}  hold. Then Riccati equation \rf{Ri-E} admits a unique  solution
\bel{Thm:Riccati-main}
P=P_1-\Si,
\ee
where $P_1\in C([0,T];\dbS^n)$ and $\Si\in C([0,T];\dbS^n_-)$ are the unique solutions to Riccati equations
\rf{Ric-1} and \rf{BLQ-Riccati-Equation1}, respectively.
\end{theorem}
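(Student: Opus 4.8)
The plan is to show by direct substitution that $P\deq P_1-\Si$ is a solution of Riccati equation \rf{Ri-E} in the sense of \autoref{def-solution-RiE}, and then to argue uniqueness. Verifying that $P_1-\Si$ is a solution splits into two essentially independent tasks: (a) checking that $P_1-\Si$ satisfies the differential equation in \rf{Ri-E} for a.e.\ $t\in[0,T]$ together with the terminal condition; and (b) checking that $R+D^\top(P_1-\Si)D$ is invertible with $(R+D^\top(P_1-\Si)D)^{-1}\in L^\i(0,T;\dbS^{m_1+m_2})$. Task (b) is the crux, and I would isolate its algebraic content in a separate lemma, namely the ``bridge'' between the singular terms of the three Riccati equations announced in the introduction (cf.\ \autoref{lem:inverse}).

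For task (a), I would differentiate $P=P_1-\Si$, substitute \rf{Ric-1} for $\dot P_1$ and \rf{BLQ-Riccati-Equation1} for $\dot\Si$, and then replace $\cA,\cB,\cC,\cQ,\cN,\cR,\cS_1,\cS_2,\cS_3$ everywhere by their definitions \rf{def-barA}--\rf{def-barR}, so that both sides of \rf{Ri-E} become rational expressions in $A,C,B_i,D_i,Q,R_i,P_1,\Si$ and $\h R_1=R_1+D_1^\top P_1D_1$. The only non-elementary ingredient is the quadratic term $(PB+C^\top PD)(R+D^\top PD)^{-1}(B^\top P+D^\top PC)$ with $B=(B_1,B_2)$ and $D=(D_1,D_2)$: expanding it requires the explicit block form of $(R+D^\top PD)^{-1}$ in terms of $\h R_1^{-1}$, $\cR^{-1}$ and $\h\Si^{-1}$, which is exactly what the bridge lemma of task (b) supplies. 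Once that substitution is made, the identity reduces to a long but routine algebraic cancellation; the terminal condition is immediate since $P_1(T)=G$ and $\Si(T)=0$ give $P(T)=G$.

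The main obstacle is task (b). Writing
$$
R+D^\top PD=\begin{pmatrix} R_1+D_1^\top PD_1 & D_1^\top PD_2\\ D_2^\top PD_1 & R_2+D_2^\top PD_2\end{pmatrix},\q P=P_1-\Si,
$$
the $(1,1)$ block equals $\h R_1-D_1^\top\Si D_1$, which is uniformly positive definite since $\h R_1\gg0$ by \rf{hat-R1} and $-D_1^\top\Si D_1\ges0$ because $\Si\les0$ by \autoref{pro:Riccati-BLQ}; in particular its inverse is essentially bounded. By the Schur-complement criterion, $R+D^\top PD$ is then invertible with bounded inverse if and only if the Schur complement of its $(1,1)$ block is, and in that case $(R+D^\top PD)^{-1}$ admits a blockwise formula through $(\h R_1-D_1^\top\Si D_1)^{-1}$ and the inverse Schur complement. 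The heart of the matter is to identify this Schur complement with the singular terms already under control: on one side $\cR$, which is uniformly negative definite — hence invertible with $\cR^{-1}\in L^\i$ — since \ref{ass:H5} forces \ref{ass:H4} (cf.\ \autoref{Prop:saddle-point-control}), so that \ref{ass:B3} holds for the leader's problem and the remark following \ref{ass:B3} applies; on the other side $\h\Si=I+\Si\cN-\Si\cS_2^\top\cR^{-1}\cS_2$, which is invertible with $\h\Si^{-1}\in L^\i$ by \autoref{pro:Riccati-BLQ}. Having perturbed $P_1$ by $-\Si$, I expect the precise relation to be a matrix-inversion-lemma identity conjugating $\cR$ by a bounded invertible factor assembled from $\cS_2$, $\cR^{-1}$ and $\h\Si^{-1}$; producing and verifying it is the most delicate and computational step, and it simultaneously yields the invertibility in (b) and the block-inverse formula needed in (a).

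For uniqueness, I would argue that any solution of \rf{Ri-E} coincides with $P_1-\Si$. One clean route: \ref{ass:H3} and \ref{ass:H5} restricted to an arbitrary subinterval $[t,T]$ remain a uniform convexity--concavity condition there (by \autoref{lem:convex} for convexity in $u_1$, and the argument behind \rf{uniform-concave-U2} for concavity in $u_2$), so by \cite{Sun2020} Problem (NG) over $[t,T]$ has a well-defined value, and a solution $P$ of \rf{Ri-E} necessarily represents it as $\lan P(t)x,x\ran$ for all $x\in\dbR^n$; since $P(t)\in\dbS^n$ this determines $P(t)$ uniquely. Alternatively, a self-contained argument: if $P$ and $\ti P$ both solve \rf{Ri-E}, the bounded invertibility of $R+D^\top PD$ and $R+D^\top\ti PD$ lets one rewrite the difference of the quadratic terms so that $\D\deq P-\ti P$ solves a linear ODE with $\D(T)=0$, whence $\D\equiv0$ by Gronwall's inequality. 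Either way, the solution constructed in (a)--(b) is the unique solution of \rf{Ri-E}, which is \rf{Thm:Riccati-main}.
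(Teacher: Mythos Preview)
Your proposal is correct and follows essentially the same route as the paper: the paper proves uniqueness by the Gr\"onwall argument you describe as your second alternative, and proves existence by direct substitution, with the key technical input being exactly the Schur-complement ``bridge'' you isolate---namely, \autoref{lem:inverse} shows that the Schur complement $\Phi$ of the $(1,1)$ block of $R+D^\top(P_1-\Si)D$ has inverse $\h\Phi=\cR^{-1}+\cR^{-1}\cS_2\h\Si^{-1}\Si\cS_2^\top\cR^{-1}$ (a Woodbury-type identity, as you anticipated), which yields both the invertibility in your task (b) and the block formula \rf{RDPD-inverse}--\rf{RDPD-inverse1} used to carry out the algebraic verification in task (a). The only minor deviation is that the paper does not pursue your first uniqueness route via the value function.
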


\begin{remark}\label{remark-Riccati}
We emphasize that \autoref{Thm:Riccati} still holds true if
\ref{ass:H5} is replaced by \rf{uniform-concave-U},
because  Riccati equation \rf{BLQ-Riccati-Equation1} is still solvable under  \rf{uniform-concave-U}.
From \autoref{Prop:saddle-point-control}, we see that the conditions \ref{ass:H3} and \rf{uniform-concave-U}
are strictly weaker than the assumptions \ref{ass:H3} and \ref{ass:H5}, which were imposed in \cite[Theorem 4.3]{Sun2020}.
Thus, by \autoref{Thm:Riccati}, first, we establish a connection between the Riccati equations \rf{Ri-E},
\rf{Ric-1} and \rf{BLQ-Riccati-Equation1};
second, we prove the well-posedness of Riccati equation  \rf{Ri-E} with a new constructive method;
third,  the assumptions imposed in \cite[Theorem 4.3]{Sun2020} are relaxed.
\end{remark}

To prove \autoref{Thm:Riccati}, we  need to make some preparations.
The difficulty mainly comes from the singularity of Riccati equations  \rf{Ri-E},
 \rf{Ric-1} and \rf{BLQ-Riccati-Equation1}.
Recall from \autoref{prop:follower} that under \ref{ass:H3},
Riccati equation \rf{Ric-1} admits a unique solution $P_1\in C([0,T];\dbS^n)$ satisfying
\bel{Ri-up}
R_1+D_1^\top P_1D_1\gg 0.
\ee
Recall from \autoref{pro:Riccati-BLQ} that under \ref{ass:H5},
Riccati equation \rf{BLQ-Riccati-Equation1} admits a unique solution  $\Si\in C([0,T];\dbS_-^n)$
such that $\h\Si= I+\Si\cN-\Si\cS_2^\top\cR^{-1}\cS_2$ is invertible with
\bel{h-Si-inverse}
\h\Si^{-1}=(I+\Si\cN-\Si\cS_2^\top\cR^{-1}\cS_2)^{-1}\in L^\i(0,T;\dbR^n)
\q\hbox{and}\q \h\Si^{-1}\Si\in L^\i(0,T;\dbS^n).
\ee
We also recall the definitions \rf{def-barA}--\rf{def-barR} of $\cA,\,\cB,\,\cC$ and $\cQ,\,\cN,\,\cR,\,\cS_i\,(i=1,2,3)$.
Combining \rf{Ri-up} with the fact $\Si\les0$, we have
\bel{R1-Si-inverse}
R_{1}+D_1^\top [P_1-\Si]D_1\gg0.
\ee
Denote
\begin{align}
\Phi&= R_2+D_2^\top (P_1-\Si)D_2
-D_2^\top(P_1-\Si)D_1[R_1+D_1^\top(P_1-\Si)D_1]^{-1}D_1^\top (P_1-\Si)D_2,\label{def-Phi}\\
%
%
%
%
\h\Phi&=\cR^{-1}+\cR^{-1}\cS_2\h\Si^{-1}\Si\cS_2^\top\cR^{-1}.\label{def-hPhi}
%
%
\end{align}

\begin{lemma}\label{lem:inverse}
The matrix-valued function $\Phi$ is invertible with its inverse given by $\Phi^{-1}=\h\Phi$.
\end{lemma}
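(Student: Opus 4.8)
The plan is to reduce the lemma to a single pointwise matrix identity and then verify it by two successive Woodbury-type manipulations. First I would observe that, since $\cR\ll0$ and $\h\Si^{-1}\Si\in L^\i(0,T;\dbS^n)$ (both already established), the function $\h\Phi$ in \rf{def-hPhi} lies in $L^\i(0,T;\dbR^{m_2\times m_2})$; hence it suffices to prove the pointwise identity $\Phi\h\Phi=I$ almost everywhere, because for square matrices a one-sided inverse is automatically two-sided. To lighten the notation I would write $L:=D_1\h R_1^{-1}D_1^\top\ges0$ and $F:=I-P_1L$, so that \rf{def-barR} gives at once $\cS_2=D_2^\top F$, $\cN=-L$, $\cR=R_2+D_2^\top(P_1-P_1LP_1)D_2$, and $\h\Si=I-\Si L-\Si\cS_2^\top\cR^{-1}\cS_2$.

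The first step, which I expect to be the crux of the whole argument, is to rewrite $\Phi$ without the inverse of $R_1+D_1^\top(P_1-\Si)D_1=\h R_1-D_1^\top\Si D_1$. By \rf{R1-Si-inverse} this matrix is uniformly positive definite, and a short determinant computation (equivalently, the observation that $(-\Si)L$ has only real nonnegative eigenvalues) shows that $I-\Si L$ is invertible; I would then introduce $\Si^\sharp:=(I-\Si L)^{-1}\Si=\Si(I-L\Si)^{-1}$ and $L^\Si:=L(I-\Si L)^{-1}$, both symmetric and essentially bounded. The push-through identity $(\h R_1-D_1^\top\Si D_1)^{-1}D_1^\top=\h R_1^{-1}D_1^\top(I-\Si L)^{-1}$ turns \rf{def-Phi} into $\Phi=R_2+D_2^\top\big[(P_1-\Si)-(P_1-\Si)L^\Si(P_1-\Si)\big]D_2$. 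The technical heart is then the $n\times n$ identity
$$(P_1-\Si)-(P_1-\Si)L^\Si(P_1-\Si)=(P_1-P_1LP_1)-F\Si^\sharp F^\top,$$
which ``disentangles'' $P_1$ and $\Si$; I would prove it by rearranging it into $(P_1-\Si)L^\Si(P_1-\Si)+\Si-P_1LP_1=(I-P_1L)\Si^\sharp(I-LP_1)$ and expanding both sides to the common expression $\Si^\sharp-P_1L\Si^\sharp-\Si^\sharp LP_1+P_1L\Si^\sharp LP_1$, using only the elementary relations $L^\Si=L+L^\Si\Si L$, $\Si^\sharp=\Si+\Si L\Si^\sharp$, $L^\Si\Si=L\Si^\sharp$ and $\Si L^\Si\Si=\Si^\sharp-\Si$. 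Recalling $\cS_2=D_2^\top F$ and the expression for $\cR$, this yields the compact formula $\Phi=\cR-\cS_2\,\Si^\sharp\,\cS_2^\top$.

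The second step handles the singular term $\h\Si^{-1}\Si$. Since $(I-\Si L)\Si^\sharp=\Si$, the expression for $\h\Si$ above factors as $\h\Si=(I-\Si L)\big(I-\Si^\sharp\cS_2^\top\cR^{-1}\cS_2\big)$; as $\h\Si$ and $I-\Si L$ are invertible, so is $I-\Si^\sharp\cS_2^\top\cR^{-1}\cS_2$, and $\h\Si^{-1}\Si=\big(I-\Si^\sharp\cS_2^\top\cR^{-1}\cS_2\big)^{-1}\Si^\sharp$. Substituting this together with $\Phi=\cR-\cS_2\Si^\sharp\cS_2^\top$ into \rf{def-hPhi} and expanding the product, everything organises into
$$\Phi\h\Phi=I+\cS_2\Big[{-\Si^\sharp}+\big(I-\Si^\sharp\cS_2^\top\cR^{-1}\cS_2\big)\h\Si^{-1}\Si\Big]\cS_2^\top\cR^{-1},$$
and the bracket vanishes by the very definition of $\h\Si^{-1}\Si$. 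Hence $\Phi\h\Phi=I$, so $\Phi$ is invertible with $\Phi^{-1}=\h\Phi$. Besides the central identity above, the only points requiring care are the invertibility of the auxiliary matrices $I-\Si L$ and $I-\Si^\sharp\cS_2^\top\cR^{-1}\cS_2$, which, as indicated, follow from \rf{R1-Si-inverse} and from the already-established invertibility of $\h\Si$, respectively; this is precisely where the hypothesis that $\Si$ is merely negative semidefinite (rather than definite) must be accommodated.
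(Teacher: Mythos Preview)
Your proof is correct, and it proceeds along a genuinely different, more structured route than the paper's own argument. The paper verifies $\Phi\h\Phi=I$ by a direct brute-force expansion: it first rewrites $\Phi$ by expanding the inverse $[R_1+D_1^\top(P_1-\Si)D_1]^{-1}$, then multiplies out $\Phi\h\Phi=I+(a)\cR^{-1}$ for a long explicit expression $(a)$ (see \rf{a}), and finally whittles $(a)$ down to zero over several pages of term-by-term simplification using the definition of $\h\Si$. By contrast, you introduce the auxiliary object $\Si^\sharp=(I-\Si L)^{-1}\Si$ and prove the single clean $n\times n$ identity
\[
(P_1-\Si)-(P_1-\Si)L^\Si(P_1-\Si)=(P_1-P_1LP_1)-F\Si^\sharp F^\top,
\]
which immediately yields the compact formula $\Phi=\cR-\cS_2\Si^\sharp\cS_2^\top$; combined with the factorisation $\h\Si=(I-\Si L)(I-\Si^\sharp\cS_2^\top\cR^{-1}\cS_2)$, the product $\Phi\h\Phi$ then collapses to $I$ in one Woodbury-type line. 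Your approach isolates exactly where the cancellation happens and explains \emph{why} $\h\Phi$ should be the inverse (it is the Woodbury inverse of $\cR-\cS_2\Si^\sharp\cS_2^\top$), whereas the paper's proof confirms the identity without exposing this structure. The paper's method has the mild advantage of requiring no auxiliary invertibility checks beyond those already established, while you correctly flag and dispatch the invertibility of $I-\Si L$ (via the nonnegativity of the spectrum of $(-\Si)L$) and of $I-\Si^\sharp\cS_2^\top\cR^{-1}\cS_2$ (via the factorisation of $\h\Si$).
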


By \autoref{lem:inverse}, it is straightforward to see that the matrix
$$
R+D^\top (P_1-\Si)D= \begin{pmatrix} R_{1}+D_1^\top (P_1-\Si)D_1 & D_1^\top (P_1-\Si)D_2 \\
D_2^\top (P_1-\Si)D_1 & R_{2}+D_2^\top (P_1-\Si)D_2\end{pmatrix},
$$
is invertible with its inverse given by
\bel{RDPD-inverse}
[R+D^\top (P_1-\Si)D]^{-1}= \begin{pmatrix}\cM_{11} & \cM_{12} \\
\cM_{21} & \cM_{22}\end{pmatrix},
\ee
where
\begin{align}
\cM_{11}&= [R_1+D_1^\top (P_1-\Si)D_1]^{-1}+[R_1+D_1^\top (P_1-\Si)D_1]^{-1}D_1^\top(P_1-\Si)D_2\nn\\
&\q\times \h\Phi D_2^\top(P_1-\Si)D_1[R_1+D_1^\top (P_1-\Si)D_1]^{-1},\nn\\
\cM_{12}&=\cM^\top_{21}=-[R_1+D_1^\top (P_1-\Si)D_1]^{-1}D_1^\top(P_1-\Si)D_2\h\Phi,\q \cM_{22}=\h\Phi.
\label{RDPD-inverse1}
\end{align}

\begin{remark}
\autoref{lem:inverse} serves as a crucial bridge between the singular terms of
Riccati equations \rf{Ri-E}, \rf{Ric-1} and \rf{BLQ-Riccati-Equation1}.
The construction of this bridge is technical.
Once the explicit form of $\Phi^{-1}$ is derived,
the result can be proved by a lengthy verification.
We sketch the proof in Appendix.
\end{remark}

\subsection{Proof of \autoref{Thm:Riccati}}
{\it\bf\large Uniqueness:}
Suppose that $ P, \bar P\in C([0,T];\dbS^n)$ are two solutions of \rf{Ri-E}.
Then by \autoref{def-solution-RiE}, both $R+D^\top PD$ and $R+D^\top\bar PD$ are invertible
with their inverses belonging to $L^\i(0,T;\dbS^n)$. Denote $\D P= P-\bar P$.
Then $\D P$ satisfies the following linear ordinary differential equation:
\bel{Delta-Ric1}\left\{\begin{aligned}
  & \D\dot P+\D PA+A^\top\D P+C^\top \D PC -(\D PB+C^\top\D PD)(R+D^\top PD)^{-1} \\
  & \hp{\dot P}\times(B^\top  P+D^\top PC)+(\bar PB^\top +C^\top\bar PD)(R+D^\top PD)^{-1} D^\top\D PD\\
  & \hp{\dot P}\times(R+D^\top\bar PD)^{-1}(B^\top  P+D^\top PC)-(\bar PB^\top +C^\top\bar PD)\\
  &\hp{\dot P} \times(R+D^\top\bar PD)^{-1}(B^\top \D P+D^\top\D PC)=0,\\
  &\D P(T)=0.
\end{aligned}\right.\ee
Note that $ P$, $\bar P$, $(R+D^\top PD)^{-1}$ and $(R+D^\top\bar PD)^{-1}$ are all bounded.
Then by a standard argument using the Gr\"{o}nwall's inequality, we get $\D P\equiv 0$.
It follows that Riccati equation \rf{Ri-E} admits at most one solution.

\bs\no
{\it\bf\large Existence:}
Note that $P_1(T)-\Si(T)=G$ and
\begin{align}
\dot P_1-\dot\Si
 %
%
%
%
%
%
%
%
%
%
&= -(P_1-\Si)A-A^\top (P_1-\Si)-C^\top (P_1-\Si)C-Q \nn\\
&\q+(P_1B_1+C^\top P_1D_1)\h R_1^{-1}(B_1^\top P_1+D_1^\top P_{1}C)-\Si B_1\h R_1^{-1}(D_1^\top P_1C+B_1^\top P_1)\nn\\
&\q-(P_1B_1+C^\top P_1D_1)\h R_1^{-1}B_1^\top\Si-C^\top\Si C-\Si \cQ\Si\nn\\
&\q+[\cC +\Si\cS_3^\top]\h\Si^{-1}\Si[\cC^\top-\cS_2^\top\cR^{-1}\cB^\top-\cS_2^\top\cR^{-1}\cS_1\Si +\cS_3\Si]\nn\\
&\q -[\cB\cR^{-1}\cS_2+\Si\cS^\top_1\cR^{-1}\cS_2]\h\Si^{-1}\Si[\cC^\top +\cS_3\Si]+\cB\h\Phi\cB^\top
+\Si\cS_1^\top\h\Phi\cB^\top\nn\\
&\q +\cB\h\Phi\cS_1\Si+\Si\cS_1^\top\h\Phi\cS_1\Si\nn\\
&\deq -(P_1-\Si)A-A^\top (P_1-\Si)-C^\top (P_1-\Si)C-Q +(I).\label{P1-Si}
\end{align}
Comparing the above with \rf{Ri-E}, to prove that $P_1-\Si$ satisfies Riccati equation \rf{Ri-E},
it suffices to show
\begin{align}
F&\deq (I)-[(P_1-\Si)B_1+C^\top(P_1-\Si)D_1]\cM_{11} [B_1^\top(P_1-\Si)+D_1^\top(P_1-\Si)C]\nn\\
&\q-[(P_1-\Si)B_1+C^\top(P_1-\Si)D_1]\cM_{12} [B_2^\top(P_1-\Si)+D_2^\top(P_1-\Si)C]\nn\\
&\q-[(P_1-\Si)B_2+C^\top(P_1-\Si)D_2]\cM_{21} [B_1^\top(P_1-\Si)+D_1^\top(P_1-\Si)C]\nn\\
&\q-[(P_1-\Si)B_2+C^\top(P_1-\Si)D_2]\cM_{22} [B_2^\top(P_1-\Si)+D_2^\top(P_1-\Si)C]\nn\\
&=0,\label{def-F}
\end{align}
where $\cM_{i,j}$ $(i,j=1,2)$ is defined by \rf{RDPD-inverse1}.
By the definitions \rf{def-barA} and \rf{def-barR}, the function $F$ can be rewritten as
\begin{align}
F&=  P_1 B_1(f_1)+\Si B_1(f_2)+ P_1 B_2(f_3)+ \Si B_2(f_4)+ C^\top P_1D_1(f_5)+ C^\top P_1D_2(f_6)+(f_7),
\label{def-Fi}
\end{align}
where
\begin{align}
(f_2)&= -(f_1),\q (f_4)= -(f_3),\q (f_5)= (f_1),\q (f_6)= (f_3),\label{def-f-2456}
\end{align}
and
\begin{align}
(f_1)&= \h R_1^{-1}(D_1P_1C+B_1^\top P_1)-\h R_1^{-1}B_1^\top\Si+\h R_1^{-1}D_1^\top\h\Si^{-1}
\Si[\cC^\top-\cS_2^\top\cR^{-1}\cB^\top-\cS_2^\top\cR^{-1}\cS_1\Si\nn\\
&\q +\cS_3\Si]
-\h R_1^{-1}D_1^\top P_1D_2\cR^{-1}\cS_2\h\Si^{-1}\Si[\cC^\top+\cS_3\Si]+\h R_1^{-1}D_1^\top P_1D_2\h\Phi
[\cB^\top+\cS_1\Si]\nn\\
&\q-\cM_{11}[B_1^\top(P_1-\Si)+D_1^\top(P_1-\Si)C]-\cM_{12} [B_2^\top(P_1-\Si)+D_2^\top(P_1-\Si)C],\label{def-f1}\\
(f_3)&= \cR^{-1}\cS_2\h\Si^{-1}\Si[\cC^\top+\cS_3\Si]-\h\Phi\cB^\top -\h\Phi\cS_1\Si
-\cM_{21} [B_1^\top(P_1-\Si)+D_1^\top(P_1-\Si)C]\nn\\
&\q-\cM_{22} [B_2^\top(P_1-\Si)+D_2^\top(P_1-\Si)C],\label{def-f3}\\
(f_7)&= C^\top \Si D_1\cM_{11} [B_1^\top(P_1-\Si)+D_1^\top(P_1-\Si)C]+C^\top \Si D_1\cM_{12} [B_2^\top(P_1-\Si)\nn\\
&\q+D_2^\top(P_1-\Si)C]+C^\top \Si D_2\cM_{21} [B_1^\top(P_1-\Si)+D_1^\top(P_1-\Si)C]\nn\\
&\q +C^\top \Si D_2\cM_{22} [B_2^\top(P_1-\Si)+D_2^\top(P_1-\Si)C]-C^\top\Si C-C^\top\h\Si^{-1}\Si\cC^\top\nn\\
&\q+C^\top\h\Si^{-1}\Si\cS_2\cR^{-1}\cB^\top+C^\top\h\Si^{-1}\Si\cS_2^\top\cR^{-1}\cS_1\Si-C^\top\h\Si^{-1}\Si\cS_3\Si.
\label{def-f7}
\end{align}
Thus to prove $F=0$, we only need to show $(f_i)=0;i=1,3,7$. In the following, we shall prove them separately.

\ms
(1)\textbf{ Proof of $(f_3)=0$.} By the definitions of $\h\Phi$, $\cM_{21}$ and $\cM_{22}$, $(f_3)$ can be rewritten as
\begin{align}
-(f_3)&=\cR^{-1}\cS_2\h\Si^{-1}\Si D_1(R_1+D_1^\top P_1 D_1)^{-1}B^\top_1\Si-\h\Phi D_2^\top P_1D_1(R_1+D_1^\top P_1 D_1)^{-1}B_1^\top\Si\nn\\
&\q-\h\Phi D_2^\top(P_1-\Si)D_1[R_1+D_1^\top(P_1-\Si)D_1]^{-1}[B_1^\top(P_1-\Si)+D_1^\top(P_1-\Si)C]\nn\\
&\q+\h\Phi D_2^\top P_1D_1(R_1+D_1^\top P_1 D_1)^{-1}(D_1^\top P_1C+B_1^\top P_1)-\h\Phi D_2^\top\Si C\nn\\
&\q-\cR^{-1}\cS_2\h\Si^{-1}\Si[D_1(R_1+D_1^\top P_1 D_1)^{-1}(D_1^\top P_1C+B_1^\top P_1)-C]\nn\\
&=-(f_{3})_1 B_1^\top(P_1-\Si)-(f_3)_2C,\label{f3}
\end{align}
where
\begin{align}
(f_3)_1&= \h\Phi D_2^\top(P_1-\Si)D_1[R_1+D_1^\top(P_1-\Si)D_1]^{-1}-\h\Phi D_2^\top P_1D_1(R_1+D_1^\top P_1 D_1)^{-1}\nn\\
&\q+\cR^{-1}\cS_2\h\Si^{-1}\Si D_1(R_1+D_1^\top P_1 D_1)^{-1},\label{f3-1}\\
(f_3)_2&= \h\Phi D_2^\top(P_1-\Si)D_1[R_1+D_1^\top(P_1-\Si)D_1]^{-1}D_1^\top(P_1-\Si)+\h\Phi D_2^\top\Si\nn\\
&\q-\cR^{-1}\cS_2\h\Si^{-1}\Si-\h\Phi D_2^\top P_1D_1(R_1+D_1^\top P_1 D_1)^{-1}D_1^\top P_1\nn\\
&\q+\cR^{-1}\cS_2\h\Si^{-1}\Si D_1(R_1+D_1^\top P_1 D_1)^{-1}D_1^\top P_1.\label{f3-2}
\end{align}
Then from $(f_3)_1=0$ and $(f_3)_2=0$ (see Appendix for the proof), we get $(f_3)=0$.

\ms
(2) \textbf{ Proof of $(f_1)=0$.} We can rewrite $(f_1)$ as
\begin{align}
(f_1)&= -(f_1)_1 [B_2^\top(P_1-\Si)+D_2^\top P_1C]-(f_1)_2[ B_1^\top(P_1-\Si)+D_1^\top P_1 C]-(f_1)_3,
\label{f1}
\end{align}
where $(f_1)_1= -(f_{3})_1^\top=0$ and
\begin{align}
(f_1)_2&= [R_1+D_1^\top(P_1-\Si)D_1]^{-1}+[R_1+D_1^\top(P_1-\Si)D_1]^{-1} D_1^\top(P_1-\Si)D_2^\top\h\Phi D_2^\top\nn\\
&\q\times(P_1-\Si)D_1[R_1+D_1^\top(P_1-\Si)D_1]^{-1}-\h R_1^{-1}+\h R_1^{-1}D_1^\top P_1 D_2\cR^{-1}\cS_2\h\Si^{-1}\Si D_1\h R_1^{-1}\nn\\
&\q-\h R_1^{-1}D_1^\top\h\Si^{-1}\Si D_1\h R_1^{-1}+\h R_1^{-1}D_1^\top\h\Si^{-1}\Si\cS_2^\top\cR^{-1} D_2^\top P_1D_1\h R_1^{-1}\nn\\
&\q-\h R_1^{-1}D_1^\top R_1D_2\h\Phi D_2^\top P_1D_1\h R_1^{-1},\label{f12}\\
(f_1)_3&= -\cM_{11}D_1^\top\Si C-\cM_{12}D_2^\top\Si C-\h R_1^{-1} D_1^\top P_1D_2\cR^{-1}\cS_2\h\Si^{-1}\Si C+\h R_1^{-1}D_1^\top\h\Si^{-1}\Si C.
\label{f13}
\end{align}
Then from $(f_1)_2=0$ and $(f_1)_3=0$ (see Appendix for the proof), we get $(f_1)=0$.

\ms
(3) \textbf{ Proof of $(f_7)=0$.}  We can rewrite $(f_7)$ as
\begin{align}
(f_7)=(f_7)_1 B_1 P_1+(f_7)_2 B_1 \Si+(f_7)_3B_2 P_1+(f_7)_4B_2\Si+(f_7)_5,
\end{align}
where
\begin{align}
(f_7)_1&=-(f_7)_2=-(f_1)_3^\top=0,\nn\\
(f_7)_3&=-(f_7)_4=C^\top (f_3)_2^\top=0,\nn\\
(f_7)_5&= C^\top \Si D_1\cM_{11} D_1^\top(P_1-\Si)C+C^\top \Si D_1\cM_{12} D_2^\top(P_1-\Si)C\nn\\
%
&\q+C^\top \Si D_2\cM_{21}D_1^\top(P_1-\Si)C+C^\top \Si D_2\cM_{22} D_2^\top(P_1-\Si)C\nn\\
&\q -C^\top\Si C-C^\top\h\Si^{-1}\Si[D_1\h R_1^{-1}D_1^\top P_1C-C]\nn\\
&\q+C^\top\h\Si^{-1}\Si\cS_2\cR^{-1}[D_2^\top P_1D_1\h R_1^{-1}D_1^\top P_1C- D_2^\top P_1C].\label{f7-5}
\end{align}
Then from $(f_7)_5=0$ (see Appendix for the proof), we get $(f_7)=0$.
$\hfill\qed$

\begin{remark}
From the above proof, we see that \autoref{Thm:Riccati} can be proved by comparing \rf{P1-Si} with \rf{Ri-E}.
Although the bridge between the singular terms of Riccati equations \rf{Ri-E}, \rf{Ric-1} and \rf{BLQ-Riccati-Equation1}
has been established by \autoref{lem:inverse},
the verification  is still technical and lengthy.
For more details of the proof, please see Appendix.
\end{remark}

\subsection{Equivalence between  Stackelberg equilibria and open-loop saddle points}\label{Sec:SPG}
In \autoref{Thm:Riccati}, a connection between the Riccati equations associated with Problems (SG) and (NG)
has been established. In this subsection, we shall show that the Stackelberg equilibrium,
obtained in \autoref{Thm:SOP-control1}, exactly is the unique open-loop saddle point of Problem (NG).

\begin{theorem}\label{Thm:saddle-point-control}
Suppose that {\rm\ref{ass:H1}--\ref{ass:H3}} and {\rm\ref{ass:H5}} hold.
Then the following results hold.
\begin{enumerate}[(i)]
\item The Stackelberg equilibrium   $(\h u_1,\h u_2)\in\cU_1\times\cU_2$ of {\rm Problem (SG)},
obtained in  {\rm\autoref{Thm:SOP-control1}},
is the unique open-loop saddle point of {\rm Problem (NG)}.

\item The value function of {\rm Problem (NG)} is given by
\bel{V-SPG}
V(x)=\lan(P_1(0)-\Si(0))x,x\ran=\lan P(0)x,x\ran, \q \forall x\in\dbR^n.
\ee
\end{enumerate}
\end{theorem}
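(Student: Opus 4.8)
The plan is to supply the ``direct proof'' promised in the statement: verify the saddle-point inequalities \rf{definition-saddle-points} for the pair $(\h u_1,\h u_2)$ constructed in \autoref{Thm:SOP-control1}, and then read off the value. Fix $x\in\dbR^n$ and recall that $\h u_1=\bar\a_1(\h u_2,x)$. The right-hand inequality $J(x;\h u_1,\h u_2)\les J(x;u_1,\h u_2)$ for all $u_1\in\cU_1$ is immediate from \autoref{prop:follower}: since $\h u_1$ is the follower's unique optimal response to $\h u_2$, one has $J(x;\h u_1,\h u_2)=\inf_{u_1\in\cU_1}J(x;u_1,\h u_2)$. The whole content therefore lies in the left-hand inequality $J(x;\h u_1,u_2)\les J(x;\h u_1,\h u_2)$ for all $u_2\in\cU_2$: with Player~1's control frozen at $\h u_1$, Player~2 (the leader, a maximizer) cannot improve upon $\h u_2$.

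To get this I would compare two scalar functionals on $\cU_2$: the follower's value $g(u_2)\deq J(x;\bar\a_1(u_2,x),u_2)=\inf_{u_1\in\cU_1}J(x;u_1,u_2)$ and the frozen cost $h(u_2)\deq J(x;\h u_1,u_2)$. Three facts drive the argument: (a) $g\les h$ on $\cU_2$, with equality at $\h u_2$, because $\h u_1=\bar\a_1(\h u_2,x)$; (b) $\h u_2$ maximizes $g$ over $\cU_2$, this being precisely the assertion that $\h u_2$ is the optimal control of Problem (LLQ), established in \autoref{prop:leader1} and used in \autoref{Thm:SOP-control1}; (c) both $g$ and $h$ are continuous quadratic functionals of $u_2$ --- clear for $h$ since $J$ is quadratic on $\cU_1\times\cU_2$, and true for $g$ because $u_2\mapsto\bar\a_1(u_2,x)$ is affine (see \rf{closed-loop-repre-X} and the linear dependence on $u_2$ of $v$ and of $\bar X$), so $g$ is a quadratic functional composed with an affine map --- and, via the decomposition $h(u_2)=J(x;\h u_1,0)+(\text{linear in }u_2)+J(0;0,u_2)$ together with \ref{ass:H5}, the functional $h$ is uniformly concave.

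Now fix a direction $v_2\in\cU_2$ and consider the quadratic polynomials $\e\mapsto\phi(\e)\deq g(\h u_2+\e v_2)$ and $\e\mapsto\psi(\e)\deq h(\h u_2+\e v_2)$. By (a), $\phi(0)=\psi(0)$ and $\phi\les\psi$; by (b), $\phi$ attains its maximum at $\e=0$, so $\phi'(0)=0$. Hence $\psi-\phi\ges0$ with $(\psi-\phi)(0)=0$, so $\psi-\phi$ has a minimum at $\e=0$, and therefore $\psi'(0)=\phi'(0)=0$. Since $v_2$ was arbitrary, the G\^ateaux derivative of the concave functional $h$ vanishes at $\h u_2$, which forces $h(u_2)\les h(\h u_2)$ for all $u_2\in\cU_2$; this is the desired left-hand inequality, so $(\h u_1,\h u_2)$ is an open-loop saddle point of Problem (NG). Uniqueness follows from \ref{ass:H3} and \ref{ass:H5}: the usual ``rectangle'' property of saddle points, combined with the uniform convexity of $u_1\mapsto J(x;u_1,u_2^*)$ (from \ref{ass:H3}) and the uniform concavity of $u_2\mapsto J(x;u_1^*,u_2)$ (from \ref{ass:H5}), shows any two open-loop saddle points coincide; alternatively one cites \cite{Sun2020}.

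Finally, since $(\h u_1,\h u_2)$ is a saddle point, Problem (NG) has a value and $V(x)=J(x;\h u_1,\h u_2)$; by \autoref{Thm:SOP-control1} this equals $\lan(P_1(0)-\Si(0))x,x\ran$, and by \autoref{Thm:Riccati} the unique solution of Riccati equation \rf{Ri-E} is $P=P_1-\Si$, so $V(x)=\lan P(0)x,x\ran$, which is \rf{V-SPG}. The step that needs real care is the sandwich above: one must know that $g$ and $h$ are regular enough (here, quadratic, hence G\^ateaux differentiable) and that $h$ is concave, so that maximality of $g$ at $\h u_2$, transported through the inequality $g\les h$ that is tight at $\h u_2$, upgrades to maximality of $h$ at $\h u_2$. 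The concavity of $h$ is exactly where \ref{ass:H5} enters; the concavity of $g$ --- not actually needed above, only that $\h u_2$ maximizes it --- is the content of \autoref{Prop:saddle-point-control}.
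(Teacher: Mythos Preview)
Your proof is correct and takes a genuinely different route from the paper's. The paper argues \emph{indirectly}: it invokes \cite[Theorem 4.4]{Sun2020} to assert that under \ref{ass:H3} and \ref{ass:H5} Problem (NG) already admits a unique open-loop saddle point $(u_1^*,u_2^*)$, then shows that any such saddle point must satisfy $u_1^*=\bar\a_1(u_2^*,x)$ (from the right-hand saddle inequality and \autoref{prop:follower}) and that $u_2^*$ is optimal for Problem (LLQ) (from the left-hand saddle inequality together with \rf{closed111}); since Problem (LLQ) has the unique optimizer $\h u_2$ by \autoref{prop:leader1}, it concludes $(u_1^*,u_2^*)=(\h u_1,\h u_2)$. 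Your approach is \emph{direct}: you verify the saddle-point inequalities for $(\h u_1,\h u_2)$ themselves, the nontrivial one handled by your sandwich/first-order argument --- stationarity of $g$ at $\h u_2$ (leader's optimality) is transported to $h$ via $g\les h$ with contact at $\h u_2$, and then the concavity of $h$ furnished by \ref{ass:H5} upgrades stationarity to maximality. The gain of your route is that existence of the saddle point is obtained without any appeal to \cite{Sun2020}; the paper's route instead bypasses the first-order analysis and replaces it by a short identification argument, at the cost of that external citation. Both routes close part (ii) the same way, via \rf{SOP-V} in \autoref{Thm:SOP-control1} and $P=P_1-\Si$ from \autoref{Thm:Riccati}.
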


\begin{proof}
(i) Under \ref{ass:H5}, by \cite[Theorem 4.4]{Sun2020} we get that Problem (NG)
admits a unique open-loop saddle point $(u_1^*,u_2^*)\in\cU_1\times\cU_2$.
By the definition of open-loop saddle points, we have
\bel{cJ-line}
\inf_{u_1\in\cU_1}\cJ_{u^*_2}(x;u_1)=\cJ_{u^*_2}(x;u^*_1) \q\hbox{and}\q J(x;u^*_1,u_2^*)=\sup_{u_2\in\cU_2}J(x;u^*_1,u_2),
\ee
where $\cJ_{u_2^*}$ is defined by \rf{cost-follower}. Then by \autoref{prop:follower}, we get
\bel{}
 u^*_1(s)=\bar\a_1(s;u^*_2,x),\q s\in[0,T],
\ee
where $\bar\a_1$ is defined by \rf{closed-loop-repre-X}.
Thus
\bel{closed1}
J(x;u^*_1,u_2^*)=J(x;\bar\a_1(u^*_2,x),u^*_2).
\ee
Moreover, recall from \autoref{prop:follower} that
\bel{closed111}
J(x;\bar\a_1(u_2,x),u_2)=\inf_{u_1\in\cU_1} J(x;u_1,u_2),\q\forall u_2\in\cU_2.
\ee
Then by the second equality in \rf{cJ-line}, we get
$$
J(x;u^*_1,u_2^*)\ges J(x;u_1^*,u_2)\ges J(x;\bar\a_1(u_2,x),u_2),\q\forall u_2\in\cU_2.
$$
Combining the above with \rf{closed1} yields that
\bel{u2-star1}
J(x;\bar\a_1(u^*_2,x),u^*_2)\ges J(x;\bar\a_1(u_2,x),u_2),\q\forall u_2\in\cU_2.
\ee
In other words, $u_2^*$ is an optimal control of  Problem (LLQ),
which is the leader's  problem.

\ms
On the other hand, by \autoref{prop:leader1},
Problem (LLQ) admits a unique optimal control $\bar u_2=\h u_2$ under \ref{ass:H1}--\ref{ass:H3} and \ref{ass:H5}.
Thus, we must have $u_2^*=\h u_2$.
Combining this with the facts $u^*_1=\bar\a_1(u^*_2,x)$ and $\h u_1=\bar\a_1(\h u_2,x)$,
we get $u_1^*=\h u_1$. It follows that  $(\h u_1,\h u_2)\in\cU_1\times\cU_2$
is the unique open-loop saddle of Problem (NG).

\ms
(ii) By the definition of the value function of Problem (NG) and the fact $(u_1^*,u_2^*)=(\h u_1,\h u_2)$, we get
$$
V(x)=J(x;u_1^*,u_2^*)=J(x;\h u_1,\h u_2).
$$
Then from \rf{SOP-V} and \rf{Thm:Riccati-main}, we obtain \rf{V-SPG}.
\end{proof}

\begin{remark}
We emphasize again that the weak (UCC) condition and the (UCC) condition are almost necessary
for the existence of a Stackelberg equilibrium and the existence of an open-loop saddle point, respectively.
Then from \autoref{Thm:SOP-control}, \autoref{example2} and \autoref{Thm:saddle-point-control},
we conclude that the gap between the weak (UCC) condition (i.e., \ref{ass:H3}--\ref{ass:H4})
and the (UCC) condition (i.e., \ref{ass:H3} and \ref{ass:H5}) is the main reason causing the different performances
between Problems (SG) and (NG).
\end{remark}

\ms

Denote
\bel{Th-star}
(\Th_1^{*\top},\Th_2^{*\top})^\top=-\big(R+D^\top PD\big)^{-1}\big(B^\top P+D^\top PC\big).
\ee

\ms

\begin{theorem}\label{closed-system-SPG}
Let {\rm\ref{ass:H1}}--{\rm\ref{ass:H3}} and \rf{uniform-concave-U} hold.
Then the Stackelberg equilibrium $(\h u_1,\h u_2)$ of {\rm Problem (SG)}
can be represented as:
\begin{align}\label{star-u}
\h u_1=u_1^*\equiv \Th^*_1 X^* \q\hbox{and }\q \h u_2=u_2^*\equiv \Th^*_2 X^*,
\end{align}
where $ X^*$ is the unique solution of the closed-loop system:
\bel{state-closed-loop-SPG}\left\{\begin{aligned}
   d X^*(s) &=\big\{A(s) X^*(s) +B_1(s)\Th^*_1 (s)X^*(s) +B_2(s)\Th^*_2(s) X^*(s) \big\}ds\\
         &~\hp{=}+\big\{C(s) X^*(s) +D_1(s)\Th^*_1(s)X^*(s)+ D_2(s)\Th^*_2(s) X^*(s)\big\}dW(s), \\
     X^*(0) &= x.
\end{aligned}\right.\ee
If {\rm\ref{ass:H5}} also holds, then $(u_1^*,u_2^*)$ is the unique open-loop saddle point of {\rm Problem (NG)}.
\end{theorem}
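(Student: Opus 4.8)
The plan is to reduce the statement to the single feedback‑gain identity $\h\Th_i=\Th^*_i$, $i=1,2$, between the leader--follower gains of \autoref{Thm:SOP-control1} and the Nash gain $\Th^*=(\Th_1^{*\top},\Th_2^{*\top})^\top$ of \rf{Th-star}, and then to read that identity off the algebra already carried out for \autoref{Thm:Riccati}. First I would assemble the ingredients. Since \rf{uniform-concave-U} is assumed, the remark following \autoref{Thm:SOP-control1} guarantees that the conclusion of that theorem stays valid: Problem (SG) has a Stackelberg equilibrium $(\h u_1,\h u_2)=(\h\Th_1\h X,\h\Th_2\h X)$, with $\h\Th_1,\h\Th_2$ given by \rf{thm:closed-loop-system-SO1}--\rf{thm:closed-loop-system-SO2} and $\h X$ the unique solution of \rf{state-closed-loop-SO}. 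Likewise, by \autoref{Thm:Riccati} and \autoref{remark-Riccati}, $P=P_1-\Si$ is the unique solution of the Nash Riccati equation \rf{Ri-E}; in particular $R+D^\top PD$ is invertible with $(R+D^\top PD)^{-1}\in L^\i(0,T;\dbS^n)$, so $\Th^*$ in \rf{Th-star} is well defined and bounded, $X^*$ in \rf{state-closed-loop-SPG} is the unique solution of a linear SDE with bounded coefficients, and by \autoref{lem:inverse} the inverse $(R+D^\top(P_1-\Si)D)^{-1}$ admits the block form \rf{RDPD-inverse}--\rf{RDPD-inverse1}.

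The main step is to verify $\h\Th_1=\Th^*_1$ and $\h\Th_2=\Th^*_2$ almost everywhere on $[0,T]$. Writing $P=P_1-\Si$ and using the block decomposition \rf{RDPD-inverse}--\rf{RDPD-inverse1}, one has $\Th^*_2=-\cM_{21}[B_1^\top P+D_1^\top PC]-\cM_{22}[B_2^\top P+D_2^\top PC]$ and $\Th^*_1=-\cM_{11}[B_1^\top P+D_1^\top PC]-\cM_{12}[B_2^\top P+D_2^\top PC]$. Substituting $\cM_{22}=\h\Phi$, the expression \rf{RDPD-inverse1} for $\cM_{21}$, and $\h\Phi=\cR^{-1}+\cR^{-1}\cS_2\h\Si^{-1}\Si\cS_2^\top\cR^{-1}$ from \rf{def-hPhi}, and then expanding through the definitions \rf{def-barA}--\rf{def-barR} of $\cA,\cB,\cC,\cQ,\cN,\cR,\cS_i$, a direct computation rewrites $\Th^*_2$ as the right‑hand side of \rf{thm:closed-loop-system-SO2}, i.e. $\Th^*_2=\h\Th_2$; this rewriting is exactly the vanishing of $(f_3)$ established in the proof of \autoref{Thm:Riccati} (see \rf{def-f3}, \rf{f3-1}--\rf{f3-2} and the Appendix), since $(f_3)$ coincides, up to sign, with $\h\Th_2-\Th^*_2$. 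In the same way $\Th^*_1=\h\Th_1$ is the vanishing of $(f_1)$ (see \rf{def-f1}, \rf{f12}--\rf{f13}); the only extra ingredient here is a Sherman--Morrison--Woodbury identity relating $[R_1+D_1^\top(P_1-\Si)D_1]^{-1}$, which enters $\cM_{11}$ and $\cM_{12}$, to $\h R_1^{-1}=(R_1+D_1^\top P_1D_1)^{-1}$, which enters \rf{thm:closed-loop-system-SO1}.

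Granting $\h\Th_i=\Th^*_i$, the drift and diffusion coefficients of \rf{state-closed-loop-SO} become $A+B_1\Th^*_1+B_2\Th^*_2$ and $C+D_1\Th^*_1+D_2\Th^*_2$, so \rf{state-closed-loop-SO} is identical with the linear SDE \rf{state-closed-loop-SPG} for $X^*$; since both have bounded coefficients and the same initial value $x$, uniqueness yields $\h X=X^*$, whence $\h u_i=\h\Th_i\h X=\Th^*_iX^*=u_i^*$, which is \rf{star-u}. For the final assertion, if \ref{ass:H5} additionally holds, then \autoref{Thm:saddle-point-control}(i) identifies the Stackelberg equilibrium $(\h u_1,\h u_2)$ of Problem (SG) with the unique open‑loop saddle point of Problem (NG); together with \rf{star-u} this shows that $(u_1^*,u_2^*)$ is that saddle point.

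I expect the only genuine obstacle to be the gain‑matching of the second paragraph. Although it amounts to re‑reading the identities $(f_1)=0$ and $(f_3)=0$ already proved for \autoref{Thm:Riccati}, making that correspondence explicit requires the block‑inverse formula of \autoref{lem:inverse} and careful bookkeeping with the two distinct $\h R_1$‑type matrices; everything else — well‑posedness and uniqueness of the linear SDEs, and the invocations of \autoref{Thm:SOP-control1}, \autoref{Thm:Riccati} and \autoref{Thm:saddle-point-control} — is routine.
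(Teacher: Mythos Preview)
Your proposal is correct and follows essentially the same route as the paper's proof: reduce everything to the gain identity $\h\Th_i=\Th^*_i$ by invoking the block inverse \rf{RDPD-inverse}--\rf{RDPD-inverse1} together with $P=P_1-\Si$ from \autoref{Thm:Riccati}, and then read off the state identification $\h X=X^*$ and the saddle-point conclusion via \autoref{Thm:saddle-point-control}. Your observation that $(f_3)=\Th^*_2-\h\Th_2$ and $(f_1)=\Th^*_1-\h\Th_1$ (after expanding $\h\Phi$ via \rf{def-hPhi}) is exactly what the paper has in mind when it says ``using the similar argument to that employed in \autoref{Thm:Riccati}''; you have simply made the correspondence explicit.
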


\begin{proof}
Using the similar argument to that employed in \autoref{Thm:Riccati},  
we can get
\begin{align}
\h\Th_1&=-\cM_{11}[B_1^\top (P_1-\Si)+D_1^\top(P_1-\Si)C]-\cM_{12}[B_2^\top (P_1-\Si)+D_2^\top(P_1-\Si)C],\nn\\
\h\Th_2&= -\cM_{21}[B_1^\top (P_1-\Si)+D_1^\top(P_1-\Si)C]-\cM_{22}[B_2^\top (P_1-\Si)+D_2^\top(P_1-\Si)C],\label{hat-Th}
\end{align}
where $\h\Th_i$ and $\cM_{ij}$  $(i,j=1,2)$  are defined by
\rf{thm:closed-loop-system-SO1}--\rf{thm:closed-loop-system-SO2} and \rf{RDPD-inverse1}, respectively.
Then by the fact $P=P_1-\Si$ obtained in \autoref{Thm:Riccati}, we can rewrite \rf{hat-Th} as
\bel{}
(\h \Th_1^{\top},\h \Th_2^{\top})^\top=(\Th_1^{*\top},\Th_2^{*\top})^\top=-\big(R+D^\top PD\big)^{-1}\big(B^\top P+D^\top PC\big).
\ee
Thus,  the Stackelberg equilibrium $(\h u_1,\h u_2)$ obtained in \autoref{Thm:SOP-control1} can be rewritten as \rf{star-u}.
If the additional assumption \ref{ass:H5} holds, by \autoref{Thm:saddle-point-control}, the control pair $(u_1^*,u_2^*)$ is the unique open-loop
saddle point of Problem (NG).
\end{proof}

\begin{remark}
By \autoref{closed-system-SPG}, we show that under {\rm\ref{ass:H1}}--{\rm\ref{ass:H3} and \rf{uniform-concave-U}},
the Stackelberg equilibrium of Problem (SG) admits another closed-loop representation \rf{star-u},
in terms of the solution to Riccati equation  \rf{Ri-E}.
When \ref{ass:H5} also holds,  \rf{state-closed-loop-SPG} coincides with the closed-loop system of Problem (NG),
which was given  in \cite[Theorem 4.4]{Sun2020}.
\end{remark}


\section{Conclusion}\label{Sec:Conclusion}
In conclusion, we show that under the weak (UCC) condition (i.e, \ref{ass:H3}--\ref{ass:H4}),
a Stackelberg equilibrium of  Problem (SG)  can be explicitly obtained
by solving a forward-backward  stochastic LQ optimal control problem (see \autoref{Thm:SOP-control}).
Interestingly, under the stronger (UCC) condition (i.e, \ref{ass:H3} and \ref{ass:H5}),
the Stackelberg equilibrium of  Problem (SG) exactly is the unique open-loop saddle point of Problem (NG)
(see \autoref{Thm:saddle-point-control} and \autoref{closed-system-SPG}).
It follows that the open-loop saddle point of Problem (NG) can be obtained by considering the game
in a leader-follower manner, which is a little surprising.
These results are achieved by a careful investigation of backward stochastic LQ optimal control problems
(see \autoref{Prop:saddle-point-control} and \autoref{Prop:backward1}).
Moreover, an explicit relationship between the Riccati equations associated with Problem (NG)
(i.e., \rf{Ri-E}) and Problem (SG) (i.e., \rf{Ric-1} and \rf{BLQ-Riccati-Equation1}) is established
(see \autoref{Thm:Riccati}). Indeed, we show that \rf{BLQ-Riccati-Equation1} serves as
a bridge between the Riccati equations associated with stochastic LQ optimal controls
and two-person zero-sum stochastic LQ Nash games (i.e., Problems (FLQ) and (NG)).
As a byproduct, the well-posedness of Riccati equation \rf{Ri-E} is reestablished by a completely new method,
which can help to relax the assumptions imposed by Sun \cite{Sun2020}.

\section{Appendix}\label{Sec:Appendix}
\subsection{Proof of \autoref{lem:inverse}}\label{proof-lemma}
By the definition of $\cR$, it is straightforward to see that
\begin{align}
\nn\Phi
&=\cR-D_2^\top \Si D_2+D_2^\top \Si D_1[R_1+D_1^\top (P_1-\Si)D_1]^{-1}D_1^\top P_1D_2\\
&\q -D_2^\top P_1D_1[R_1+D_1^\top (P_1-\Si)D_1]^{-1}D_1^\top\Si D_1[R_1+D_1^\top P_1D_1]^{-1}D_1^\top P_1D_2\nn\\
&\q+D_2^\top P_1 D_1[R_1+D_1^\top (P_1-\Si)D_1]^{-1}D_1^\top\Si D_2\nn\\
\label{Phi-1}
&\q-D_2^\top \Si D_1[R_1+D_1^\top (P_1-\Si)D_1]^{-1}D_1^\top\Si D_2.
\end{align}
Then by the definition of $\cS_2$, we get
\begin{align}
\Phi\h\Phi&=I+[D_2^\top-D_2^\top P_1D_1\h R_1^{-1}D_1^\top]\h\Si^{-1} \Si[D_2-D_1\h R_1^{-1}D_1^\top P_1D_2]\cR^{-1}\nn\\
\nn&\q+\big\{D_2^\top \Si D_1[R_1+D_1^\top (P_1-\Si)D_1]^{-1}D_1^\top P_1D_2-D_2^\top \Si D_2\\
&\qq+D_2^\top P_1 D_1[R_1+D_1^\top (P_1-\Si)D_1]^{-1}D_1^\top\Si [D_2-D_1\h R_1^{-1}D_1^\top P_1D_2]\nn\\
\nn
&\qq-D_2^\top \Si D_1[R_1+D_1^\top (P_1-\Si)D_1]^{-1}D_1^\top\Si D_2\big\}\\
&\q\times\big\{\cR^{-1}+\cR^{-1}[D_2^\top-D_2^\top P_1D_1\h R_1^{-1}D_1^\top]
\h\Si^{-1} \Si[D_2-D_1\h R_1^{-1}D_1^\top P_1D_2]\cR^{-1}\big\}\nn\\
&\deq I+(a)\cR^{-1}.\label{a}
\end{align}
%
%
%
%
%
%
%
%
%
%
%
%
%
Thus to prove \autoref{lem:inverse}, noting that $\Phi$ and $\h\Phi$ are symmetric,
it suffices to show that $(a)=0$. By the definition \rf{hat-Si1} of $\h\Si$, the function $(a)$ can be simplified as follows:
\begin{align}
(a)
%
%
%
%
%
%
%
%
%
%
%
%
%
&=\big\{D_2^\top-D_2^\top P_1D_1\h R_1^{-1}D_1^\top+D_2^\top P_1D_1[R_1+D_1^\top (P_1-\Si)D_1]^{-1}D_1^\top -D_2^\top P_1D_1\nn\\
\nn&\qq\times[R_1+D_1^\top (P_1-\Si)D_1]^{-1}D_1^\top\Si D_1\h R_1^{-1}D_1^\top\big\}\h\Si^{-1} \Si[D_2-D_1\h R_1^{-1}D_1^\top P_1D_2]\\
\nn&\q+\big\{D_2^\top \Si D_1[R_1+D_1^\top (P_1-\Si)D_1]^{-1}D_1^\top P_1D_2-D_2^\top \Si D_2\\
\nn
&\qq-D_2^\top \Si D_1[R_1+D_1^\top (P_1-\Si)D_1]^{-1}D_1^\top\Si D_2\big\}\\
\nn&\q+\big\{D_2^\top \Si D_1[R_1+D_1^\top (P_1-\Si)D_1]^{-1}D_1^\top P_1D_2-D_2^\top \Si D_1[R_1+D_1^\top (P_1-\Si)D_1]^{-1}\\
%
%
%
&\qq\times D_1^\top\Si D_2-D_2^\top \Si D_2\big\}\cR^{-1}[D_2^\top-D_2^\top P_1D_1\h R_1^{-1}D_1^\top]\h\Si^{-1}
\Si[D_2-D_1\h R_1^{-1}D_1^\top P_1D_2].\label{(a)-1}
\end{align}
Further,  using the fact
\begin{align*}
&D_2^\top\big\{I-\Si D_1\h R_1^{-1}D_1^\top-\Si[D_2-D_1\h R_1^{-1}D_1^\top P_1D_2]\cR^{-1}\nn\\
&\qq\times[D_2^\top-D_2^\top P_1D_1\h R_1^{-1}D_1^\top]\big\}\h\Si^{-1}\Si[D_2-D_1\h R_1^{-1}D_1^\top P_1D_2]\\
&\q=D_2^\top\Si[D_2-D_1\h R_1^{-1}D_1^\top P_1D_2],
\end{align*}
equality \rf{(a)-1} can be  simplified as follows:
\begin{align}
(a)
%
%
%
%
%
%
%
%
%
%
%
&=\big\{-D_2^\top P_1D_1\h R_1^{-1}D_1^\top+D_2^\top P_1D_1[R_1+D_1^\top (P_1-\Si)D_1]^{-1}D_1^\top\nn\\
\nn&\qq -D_2^\top P_1D_1[R_1+D_1^\top (P_1-\Si)D_1]^{-1}D_1^\top\Si D_1\h R_1^{-1}D_1^\top\\
\nn &\qq+D_2^\top\Si D_1\h R_1^{-1}D_1^\top\big\}\h\Si^{-1} \Si[D_2-D_1\h R_1^{-1}D_1^\top P_1D_2]\\
\nn&\q-D_2^\top \Si D_1[R_1+D_1^\top (P_1-\Si)D_1]^{-1}D_1^\top\Si [D_2-D_1\h R_1^{-1}D_1^\top P_1D_2]\\
%
%
\nn&\q+D_2^\top \Si D_1[R_1+D_1^\top (P_1-\Si)D_1]^{-1}D_1^\top\Si [D_1\h R_1^{-1}D_1^\top P_1D_2-D_2]\\
%
%
%
&\q\times\cR^{-1}[D_2^\top-D_2^\top P_1D_1\h R_1^{-1}D_1^\top]\h\Si^{-1} \Si[D_2-D_1\h R_1^{-1}D_1^\top P_1D_2].\label{lemma-proof1}
\end{align}
Then by substituting
\begin{align*}
&D_2^\top \Si D_1[R_1+D_1^\top (P_1-\Si)D_1]^{-1}D_1^\top\big\{I-\Si D_1\h R_1^{-1}D_1^\top-\Si[D_2-D_1\h R_1^{-1}D_1^\top P_1D_2]\nn\\
&\q\times\cR^{-1}[D_2^\top-D_2^\top P_1D_1\h R_1^{-1}D_1^\top]\big\}\h\Si^{-1}\Si[D_2-D_1\h R_1^{-1}D_1^\top P_1D_2]\\
&\q=D_2^\top \Si D_1[R_1+D_1^\top (P_1-\Si)D_1]^{-1}D_1^\top\Si[D_2-D_1\h R_1^{-1}D_1^\top P_1D_2]
\end{align*}
into \rf{lemma-proof1}, we get
\begin{align}
(a)
&=\big\{D_2^\top P_1D_1[R_1+D_1^\top (P_1-\Si)D_1]^{-1}D_1^\top-D_2^\top P_1D_1\h R_1^{-1}D_1^\top\nn\\
\nn&\qq -D_2^\top P_1D_1[R_1+D_1^\top (P_1-\Si)D_1]^{-1}D_1^\top\Si D_1\h R_1^{-1}D_1^\top+D_2^\top\Si D_1\h R_1^{-1}D_1^\top\\
\nn &\qq-D_2^\top \Si D_1[R_1+D_1^\top (P_1-\Si)D_1]^{-1}D_1^\top+D_2^\top \Si D_1[R_1+D_1^\top (P_1-\Si)D_1]^{-1}\\
\nn&\qq \times D_1^\top\Si D_1\h R_1^{-1}D_1^\top\big\} \h\Si^{-1} \Si[D_2-D_1\h R_1^{-1}D_1^\top P_1D_2]=0.
\end{align}
The proof is complete.
$\hfill\qed$

\subsection{Details in the proof of \autoref{Thm:Riccati}}
{\it\bf  Verification of $(f_3)_1=0$.}
By the definition \rf{def-hPhi} of $\h\Phi$, we have
\begin{align}
\cR(f_3)_1&= [I+ \cS_2\h\Si^{-1}\Si\cS_2^\top\cR^{-1}]D_2^\top(P_1-\Si)D_1[R_1+D_1^\top(P_1-\Si)D_1]^{-1}\nn\\
&\q-[I+ \cS_2\h\Si^{-1}\Si\cS_2^\top\cR^{-1}]D_2^\top P_1D_1\h R_1^{-1}+\cS_2\h\Si^{-1}\Si D_1\h R_1^{-1}.\label{f3-11}
\end{align}
It follows that
\begin{align}
&\cR(f_3)_1[R_1+D_1^\top(P_1-\Si)D_1]\nn\\
&\q=D_2^\top P_1D_1\h R_1^{-1}D_1^\top\Si D_1-D_2^\top\Si D_1-\cS_2\h\Si^{-1}\Si\cS_2^\top\cR^{-1}D_2^\top\Si D_1+\cS_2\h\Si^{-1}\Si D_1\nn\\
&\qq+\cS_2\h\Si^{-1}\Si\cS_2^\top\cR^{-1}D_2^\top P_1D_1\h R_1^{-1}D_1^\top\Si D_1-\cS_2\h\Si^{-1}\Si D_1\h R_1^{-1}D_1^\top\Si D_1\nn\\
&\q=D_2^\top P_1D_1\h R_1^{-1}D_1^\top\Si D_1-D_2^\top\Si D_1+\cS_2\h\Si^{-1}\h\Si\Si D_1=0.
\end{align}
Since $\cR\ll 0$ and $R_1+D_1^\top(P_1-\Si)D_1\gg 0$, the above implies that $(f_3)_1=0$.
$\hfill\qed$

\ms

\no
{\it\bf  Verification of $(f_3)_2=0$.}
By the fact $(f_3)_1=0$, we can simplify $(f_3)_2$ as follows:
\begin{align}
(f_3)_2&=\{\h\Phi D_2^\top-\h \Phi D_2^\top(P_1-\Si)D_1[R_1+D_1^\top(P_1-\Si)D_1]^{-1}D_1^\top-\cR^{-1}\cS_2\h\Si^{-1}\}\Si.\label{f3-21}
\end{align}
Then by the definition \rf{def-hPhi} of $\h\Phi$ and \rf{f3-11}, we have
\begin{align}
\cR(f_3)_2&=[I+\cS_2\h\Si^{-1}\Si\cS_2^\top\cR^{-1}]D_2^\top\Si-\cS_2\h\Si^{-1}\Si\nn\\
&\q- [I+\cS_2\h\Si^{-1}\Si\cS_2^\top\cR^{-1}]D_2^\top(P_1-\Si)D_1[R_1+D_1^\top(P_1-\Si)D_1]^{-1}D_1^\top\Si\nn\\
&=D_2^\top P_1D_1\h R_1^{-1}D_1^\top\Si- \cS_2\h\Si^{-1}\Si D_1\h R_1^{-1}D_1^\top\Si
+ \cS_2\h\Si^{-1}\Si\cS_2^\top\cR^{-1}D_2^\top P_1D_1\h R_1^{-1}D_1^\top\Si\nn\\
&\q- [I+\cS_2\h\Si^{-1}\Si\cS_2^\top\cR^{-1}]D_2^\top(P_1-\Si)D_1[R_1+D_1^\top(P_1-\Si)D_1]^{-1}D_1^\top\Si\nn\\
&=-\cR(f_3)_1D_1^\top\Si=0,
\end{align}
which implies $(f_3)_2=0$. $\hfill\qed$

\ms

\no
{\it\bf  Verification of $(f_1)_2=0$}. By the fact $(f_3)_1=0$,  we get
\begin{align}
&\h R_1 (f_1)_2[R_1+D_1^\top(P_1-\Si)D_1]\nn\\
&\q=D_1^\top\Si D_1+D_1^\top P_1 D_2\cR^{-1}\cS_2\h\Si^{-1}\Si D_1-D_1^\top P_1 D_2\cR^{-1}\cS_2\h\Si^{-1}\Si D_1\h R_1^{-1}D_1^\top\Si D_1\nn\\
&\qq-D_1^\top\h\Si^{-1}\Si D_1+D_1^\top\h\Si^{-1}\Si D_1\h R_1^{-1}D_1^\top\Si D_1-D_1^\top\h\Si^{-1}\Si\cS_2^\top\cR^{-1} D_2^\top P_1D_1\h R_1^{-1}D_1^\top\Si D_1\nn\\
&\qq+D_1^\top P_1D_2\h\Phi D_2^\top P_1D_1\h R_1^{-1}D_1^\top\Si D_1+D_1^\top \h\Si^{-1}\Si\cS_2^\top\cR^{-1}D_2^\top\Si D_1-D_1^\top P_1D_2\h\Phi D_2^\top\Si D_1.\nn
\end{align}
Then by the definitions of $\h\Phi$, $\h\Si$ and $\cS_2$, the above can be simplified as
\begin{align*}
&\h R_1 (f_1)_2[R_1+D_1^\top(P_1-\Si)D_1]\\
&\q=D_1^\top P_1 D_2\cR^{-1}\cS_2\h\Si^{-1}\Si D_1-D_1^\top P_1 D_2\cR^{-1}\cS_2\h\Si^{-1}\Si D_1\h R_1^{-1}D_1^\top\Si D_1\\
&\qq+D_1^\top P_1D_2\h\Phi D_2^\top P_1D_1\h R_1^{-1}D_1^\top\Si D_1-D_1^\top P_1D_2\h\Phi D_2^\top\Si D_1\\
&\q=D_1^\top P_1 D_2\cR^{-1}\cS_2\Si D_1+D_1^\top P_1 D_2\cR^{-1}D^\top_2P_1D_1\h R_1^{-1}D_1^\top\Si D_1
-D_1^\top P_1D_2\cR^{-1}D_2^\top\Si D_1\\
&\q=0.
\end{align*}
It follows that $(f_1)_2=0$. $\hfill\qed$

\ms

\no
{\it\bf  Verification of $(f_1)_3=0$}. By the definition of $\h\Phi$ and \rf{f3-21}, we get
\begin{align}
&[R_1+D_1^\top(P_1-\Si)D_1](f_1)_3\nn\\
&\q=D_1^\top(P_1-\Si)D_2\big\{\h\Phi D_2^\top-\h\Phi D_2^\top(P_1-\Si)D_1[R_1+D_1^\top(P_1-\Si)D_1]^{-1}D_1^\top-\cR^{-1}\cS_2\h\Si^{-1}\big\}\Si C\nn\\
&\q=D_1^\top(P_1-\Si)D_2(f_3)_2.
\end{align}
The result then follows from $(f_3)_2=0$. $\hfill\qed$

\ms

\no
{\it\bf  Verification of $(f_7)_5=0$}. Note that
\begin{align}
(f_7)_5&=C^\top \Si D_1\cM_{11} D_1^\top(P_1-\Si)C+C^\top \Si D_1\cM_{12} D_2^\top(P_1-\Si)C
+C^\top \Si D_2\cM_{21}D_1^\top(P_1-\Si)C\nn\\
&\q+C^\top \Si D_2\cM_{22} D_2^\top(P_1-\Si)C +C^\top(P_1-\Si) C-C^\top\h\Si^{-1}(P_1-\Si) C.
\end{align}
Thus, to prove $(f_7)_5=0$, it is sufficient to show that
\bel{(c5)-proof1}
(f_7)_{51}\deq \Si D_1\cM_{11} D_1^\top +\Si D_1\cM_{12} D_2^\top
+ \Si D_2\cM_{21}D_1^\top+ \Si D_2\cM_{22} D_2^\top +I-\h\Si^{-1}=0.
\ee
By the definition of $\cM_{ij};i,j=1,2$, we get
\begin{align}
\h\Si (f_7)_{51}
%
%
%
%
%
%
%
&=\Si\Big\{-\cS_2^\top\cR^{-1}\cS_2-\cS_2^\top\cR^{-1}\cS_2\Si D_1[R_1+D_1^\top(P_1-\Si)D_1]^{-1}D_1^\top\nn\\
&\q+\{D_1[R_1+D_1^\top(P_1-\Si)D_1]^{-1}D_1^\top(P_1-\Si)-I\}D_2\h\Phi D_2^\top\{(P_1-\Si)D_1\nn\\
&\q\times[R_1+D_1^\top(P_1-\Si)D_1]^{-1}D_1^\top-I\}+\{\cN-\cS_2^\top\cR^{-1}\cS_2\}\Si\nn\\
&\q\times\{D_1[R_1+D_1^\top(P_1-\Si)D_1]^{-1}D_1^\top(P_1-\Si)-I\}D_2^\top\h\Phi D_2^\top\nn\\
&\q\times\{(P_1-\Si)D_1[R_1+D_1^\top(P_1-\Si)D_1]^{-1}D_1^\top-I\}\Big\}\nn\\
&\deq \Si(f_7)_{52}.
\end{align}
By the definition of $\cN$, we get
\begin{align}
(f_7)_{52}
%
%
%
%
%
%
%
&=\big\{\cS_2^\top\cR^{-1}\cS_2\Si D_2\h\Phi D_2^\top-\cS_2^\top\cR^{-1}\cS_2\Si D_1 [R_1+D_1^\top(P_1-\Si)D_1]^{-1}D_1^\top(P_1-\Si)D_2\h\Phi D^\top_2\nn\\
&\qq-D_2^\top\h\Phi D_2^\top-\cN P_1D_1\h\Phi D_2^\top\big\}\{(P_1-\Si)D_1[R_1+D_1^\top(P_1-\Si)D_1]^{-1}D_1^\top-I\}\nn\\
&\q-\cS_2^\top\cR^{-1}\cS_2-\cS_2^\top\cR^{-1}\cS_2\Si D_1[R_1+D_1^\top(P_1-\Si)D_1]^{-1}D_1^\top,
\end{align}
which yields
\begin{align}
(f_7)_{52}
&=\cS_2^\top\cR^{-1}\cS_2\big\{\h\Si^{-1}\Si\cS_2^\top\cR^{-1}+\Si D_1[R_1+D_1^\top(P_1-\Si)D_1]^{-1}D_1^\top (P_1-\Si)D_2\h\Phi\nn\\
&\q-\Si D_2\h\Phi\big\}D^\top_2+\cS_2^\top\cR^{-1}D_2^\top P_1D_1\h R_1^{-1}D_1^\top-\cS_2^\top\cR^{-1}\cS_2\Si D_1[R_1+D_1^\top(P_1-\Si)D_1]^{-1}D_1^\top\nn\\
&\q-\cS_2^\top\big\{\h\Phi+\cR^{-1}\cS_2\Si D_1[R_1+D_1^\top(P_1-\Si)D_1]^{-1}D_1^\top (P_1-\Si)D_2\h\Phi -\cR^{-1}\cS_2\Si D_2\h\Phi\big\}\nn\\
&\q \times D_2^\top(P_1-\Si) D_1[R_1+D_1^\top(P_1-\Si)D_1]^{-1}D_1^\top\nn\\
&\deq \cS_2^\top\cR^{-1}\cS_2(f_7)_{53}D_2^\top+(f_7)_{54} D_1^\top.\label{f7-52}
\end{align}
By \rf{f3-21} and the fact $\Si-\h\Si\Si(\h\Si^{-1})^\top=0$, we get $(f_7)_{53}=0$. Moreover,
\begin{align}
(f_7)_{54}
&=\cS_2^\top\cR^{-1}D_2^\top P_1D_1\h R_1^{-1}-\cS_2^\top\cR^{-1}\cS_2\Si D_1[R_1+D_1^\top(P_1-\Si)D_1]^{-1}\nn\\
&\q-\cS_2^\top\cR^{-1}D_2^\top(P_1-\Si)D_1[R_1+D_1^\top(P_1-\Si)D_1]^{-1}=0.
\end{align}
Substituting $(f_7)_{53}=0$ and $(f_7)_{54}=0$ into \rf{f7-52} yields $(f_7)_{52}=0$, which then implies $(f_7)_5=0$.
$\hfill\qed$


\begin{thebibliography}{90}
\addtolength{\itemsep}{-1.0ex}
\bibitem{Bagchi-Basar1981} A.~Bagchi and T.~Basar,
\it Stackelberg strategies in linear-quadratic stochastic differential games,
\rm J. Optim. Theory Appl., {\bf 35} (1981), pp. 443--464.

\bibitem{Bensoussan2018} A. Bensoussan,
\it  Estimation and control of dynamical systems,
\rm Heidelberg, Springer, 2018.

\bibitem{Bensoussan-Chau-Lai-Yam2017} A. Bensoussan, M. H. M. Chau, Y. Lai, and S. C. P. Yam,
\it Linear-quadratic mean field Stackelberg games with state and control delays,
\rm SIAM J. Control Optim., {\bf55} (2017), pp. 2748--2781.

\bibitem{Bensoussan-Chen-Sethi2015} A. Bensoussan, S. Chen, and S. P. Sethi,
\it The maximum principle for global solutions of stochastic Stackelberg differential games,
\rm SIAM J. Control Optim., {\bf53} (2015), pp. 1956--1981.

\bibitem{Cvitanic-Zhang2012} J. Cvitani\'{c} and J. Zhang,
\it Contract theory in continuous-time models,
\rm Springer Science and Business Media, 2012.

\bibitem{Delfour2007} M. C. Delfour,
\it Linear quadratic differential games: saddle point and Riccati differential equations,
\rm SIAM J. Control Optim., {\bf46} (2007), pp. 750--774.


\bibitem{Delfour-Sbarba2009} M. C. Delfour and O. D. Sbarba,
\it Linear quadratic differential games: closed loop saddle points,
\rm SIAM J. Control Optim., {\bf 47} (2009), pp. 3138--3166.


\bibitem{Li-Yu2018} N. Li and Z. Yu,
\it Forward-backward stochastic differential equations and linear-quadratic generalized Stackelberg games,
\rm SIAM J. Control Optim., {\bf56} (2018), pp. 4148--4180.



\bibitem{Li-Sun-Xiong2019} X. Li, J. Sun, and J. Xiong,
\it Linear quadratic optimal control problems for mean-field backward stochastic differential equations,
\rm Appl. Math. Optim., {\bf80} (2019), pp. 223--250.

\bibitem{Lim-Zhou2001} A.~E.~B.~Lim and X.~Y.~Zhou,
\it Linear-quadratic control of backward stochastic differential equations,
\rm SIAM J. Control Optim., {\bf40} (2001), pp. 450--474.

\bibitem{Moon2021} J. Moon,
\it Linear-quadratic stochastic Stackelberg differential games for jump-diffusion systems,
\rm SIAM J. Control Optim., {\bf 59} (2021), pp. 954--976.

\bibitem{Moon-Yang2020} J. Moon and H. J. Yang,
\it Linear-quadratic time-inconsistent mean-field type Stackelberg differential games: Time-consistent open-loop solutions,
\rm  IEEE Trans. Automat. Control, {\bf66} (2020), pp. 375--382.

\bibitem{Mou-Yong2006} L. Mou and J. Yong,
\it Two-person zero-sum linear quadratic stochastic differential games by a Hilbert space method,
\rm  J. Ind. Manag. Optim., {\bf 2} (2006), pp. 95--117.

\bibitem{Shi-Wang-Xiong2016} J. Shi, G. Wang, and J. Xiong,
\it Leader-follower stochastic differential game with asymmetric information and applications,
\rm Automatica J. IFAC, {\bf63} (2016), pp. 60--73.

\bibitem{Stackelberg1934} H. von Stackelberg,
\it  Marktform und Gleichgewicht,
\rm Springer, Vienna, 1934.

\bibitem{Sun2020} J.~Sun,
\it Two-person zero-sum stochastic linear-quadratic differential games,
\rm SIAM J. Control Optim.,  {\bf 59} (2021), pp. 1804--1829.


\bibitem{Sun-Li-Yong2016} J.~Sun, X.~Li, and J.~Yong,
\it Open-loop and closed-loop solvabilities for stochastic linear quadratic optimal control problems,
\rm SIAM J. Control Optim., {\bf 54} (2016), pp. 2274--2308.


\bibitem{Sun-Wang2019}  J.~Sun and H.~Wang,
\it Linear-quadratic optimal control for backward stochastic differential equations with random coefficients,
\rm ESAIM Control Optim. Calc. Var., {\bf27} (2021), 46.

\bibitem{Sun-Wang-Wu2021} J. Sun, H. Wang, and Z. Wu,
\it Mean-field linear-quadratic stochastic differential games,
\rm  J. Differential Equations, {\bf296} (2021), pp. 299--334.

\bibitem{Sun-Wu-Xiong2021} J. Sun, Z. Wu, and J. Xiong,
\it Indefinite backward stochastic linear-quadratic optimal control problems,
\rm  arXiv:2104.04747, 2021.

\bibitem{Sun-Yong2014} J.~Sun and J.~Yong,
\it Linear quadratic stocahastic differential games: open-loop and closed-loop saddle points,
\rm SIAM J. Control Optim., {\bf 52} (2014), pp. 4082--4121.

\bibitem{Sun-Yong20201} J. Sun and J. Yong,
\it Stochastic linear-quadratic optimal control theory: open-loop and closed-loop solutions,
\rm Springer Briefs in Mathematics, 2020.

\bibitem{Sun-Yong20202} J. Sun and J. Yong,
\it Stochastic linear-quadratic optimal control theory: differential games and mean-field problems,
\rm Springer Briefs in Mathematics, 2020.

\bibitem{Wang-Sun-Yong2019} H. Wang, J. Sun, and J. Yong,
\it Weak closed-loop solvability of stochastic linear-quadratic optimal control problems,
\rm  Discrete Contin. Dyn. Syst.  {\bf39} (2019), pp. 2785--2805.


\bibitem{Yong2002} J.~Yong,
\it A leader-follower stochastic linear quadratic differential game,
\sl SIAM J. Control Optim.,
\rm {\bf41} (2002), pp. 1015--1041.



\bibitem{Yong-Zhou1999} J.~Yong and X.~Y.~Zhou,
\it Stochastic controls: Hamiltonian systems and HJB equations,
\rm Springer-Verlag, New York, 1999.

\bibitem{Yu2015} Z. Yu,
\it An optimal feedback control-strategy pair for zero-sum linear-quadratic stochastic differential game: the Riccati equation approach,
\rm SIAM J. Control Optim., {\bf53} (2015), pp. 2141--2167.


\bibitem{Zhang2005} P. Zhang,
\it Some results on two-person zero-sum linear quadratic differential games,
\rm SIAM J. Control Optim., {\bf 43} (2005), pp. 2157--2165.
\end{thebibliography}
\end{document}